\newtheorem{thm}{Theorem}
\newtheorem{lem}[thm]{Lemma}
\newtheorem{cor}[thm]{Corollary}
\newtheorem{prop}[thm]{Proposition}
\theoremstyle{remark}
\newtheorem{defn}[thm]{Definition}
\newtheorem{rmk}[thm]{Remark}
\newtheorem*{ex}{Example}
\newtheorem*{exs}{Examples}
\numberwithin{thm}{section} \numberwithin{equation}{section}
\newcommand{\mcE}{\mathcal{E}}
\newcommand{\mcT}{\mathcal{T}}
\newcommand{\mcK}{\mathcal{K}}
\newcommand{\mcC}{\mathcal{C}}
\newcommand{\mcU}{\mathcal{U}}
\newcommand{\CC}{\mathbb{C}}
\newcommand{\KK}{\mathbb{K}}
\newcommand{\HH}{\mathbb{H}}
\newcommand{\ZZ}{\mathbb{Z}}
\newcommand{\NN}{\mathbb{N}}
\DeclareMathOperator{\Cstar}{C^*}
\keywords{K-theory, kk-theory, smooth generalized crossed products, generalized Weyl algebras}
\subjclass[2010]{Primary 46L87; Secondary  19K35, 58B34}
\title{Bivariant K-theory of generalized Weyl algebras}
\author{Julio Guti\'errez}
\address{Instituto de Matem\'atica y Ciencias Afines (IMCA) Calle Los Bi\'ologos 245. Urb San C\'esar.
La Molina, Lima 12, Per\'u.}
\email{julio.gutierrez@imca.edu.pe}
\thanks{Julio Guti\'errez was supported by Cienciactiva CG 217-2014}
\author{Christian Valqui}
\address{Pontificia Universidad Cat\'olica del Per\'u, Secci\'on Matem\'aticas, PUCP,
Av. Universitaria 1801, San Miguel, Lima 32, Per\'u.}
\email{cvalqui@pucp.edu.pe}
\thanks{Christian Valqui was supported by PUCP-DGI-2017-1-0035}
\begin{document}

\begin{abstract} We compute the isomorphism class in $\mathfrak{KK}^{alg}$ of all noncommutative generalized Weyl algebras $A=\CC[h](\sigma, P)$,
where $\sigma(h)=qh+h_0$ is an automorphism of $\CC[h]$, except when $q\neq 1$ is a root of unity. In particular, we compute the isomorphism class in
 $\mathfrak{KK}^{alg}$ of the quantum Weyl algebra, the primitive factors $B_{\lambda}$ of $U(\mathfrak{sl}_2)$ and the quantum weighted projective
 lines $\mathcal{O}(\mathbb{WP}_q(k, l))$.
\end{abstract}

\maketitle

\setcounter{tocdepth}{2}
\tableofcontents


\section*{Introduction}

In \cite{MR2240217}, Cuntz defined a bivariant $K$-theory $kk^{alg}$ in the category $\mathfrak{lca}$ of locally convex algebras.
These are algebras $A$ that are complete locally convex vector spaces over $\CC$ with a jointly continuous multiplication. To a pair of locally convex
 algebras $A$ and $B$, there correspond abelian groups $kk_n^{alg}(A, B)$, $n\in\ZZ$ and there are bilinear maps
$$kk_n^{alg}(A, B)\times kk_m^{alg}(B, C)\to kk_{n+m}^{alg}(A, C)$$
for every $A, B$ and $C$ locally convex algebras and $m, n \in \ZZ$. Using this product, we can define a category $\mathfrak{KK}^{alg}$ whose objects are
locally convex algebras and whose morphisms are given by the graded groups $kk_{*}^{alg}(A, B)$. Then the bivariant $K$-theory $kk^{alg}$ can be seen as a
functor $kk^{alg}:\mathfrak{lca}\to \mathfrak{KK}^{alg}$. This functor is universal among split exact, diffotopy invariant and stable functors (see
Theorem 7.26 in~\cite{MR2340673}). In
particular, an isomorphism in $\mathfrak{KK}^{alg}$  induces an isomorphism in $\mathfrak{KK}^{\mathcal{L}_p}$ (see
Definition~\ref{def kk Lp}) and in $HP$.

Joachim Cuntz initiated the construction of different bivariant K-theories in several categories (see \cite{MR1456322}, \cite{MR2240217}
and~\cite{MR2207702}), and in~\cite{MR2240217} he proved that
the Weyl algebra $W=\CC\langle x, y|xy-yx=1\rangle$ is isomorphic to $\CC$ in $\mathfrak{KK}^{alg}$. By the universal property of $\mathfrak{KK}^{alg}$
this implies $\mathfrak{KK}^{\mathcal{L}_p}_0(\CC, W)=\ZZ$
and $\mathfrak{KK}^{\mathcal{L}_p}_1(\CC, W)=0$.

On the other hand, in \cite{MR3054304}, exact sequences analog to the Pimsner-Voiculescu exact sequence were constructed for smooth generalized crossed
products that satisfy the condition of being tame smooth. We shall consider generalized Weyl algebras over $\CC[h]$ which are smooth generalized crossed
products (but are not tame smooth in general).

\begin{defn} Let $D$ be a ring, $\sigma \in Aut(D)$ and
$a$ a central element of $D$. The generalized Weyl algebra $D(\sigma, a)$ is the algebra generated by $x$ and $y$ over $D$ satisfying
\begin{equation*}
xd=\sigma(d)x,\  yd=\sigma^{-1}(d)y,\ yx=a \text{ and } xy=\sigma(a)
\end{equation*}
for all $d\in D$.
\end{defn}

In this article, we compute the isomorphism class in $\mathfrak{KK}^{alg}$ of all non commutative generalized Weyl algebras $A=\CC[h](\sigma, P)$,
 where $\sigma(h)=qh+h_0$ is an automorphism of $\CC[h]$ and $P\in\CC[h]$, except when $q\neq 1$ is a root of unity. In the table below we list all posible
 cases for $A$ and our results.

\begin{table}[htb]
\begin{tabular}{| p{3.1cm} |  l |  l | p{1.8cm} | l |}
\hline
\multicolumn{2}{|c|}{Conditions} &  \multicolumn{2}{|c|}{Results}  &  Observation \\ \hline
\multirow{2}{*}{$P$ is constant} &  $P=0$  & $A\cong_{\mathfrak{KK}^{alg}}\CC$ & Prop  \ref{case1}& $A$ $\NN$-graded\\ \cline{2-5}
  & $P\neq 0$ & $A\cong_{\mathfrak{KK}^{alg}}S\CC \oplus \CC$ & Prop \ref{invariants-exceptional-case-1}& $A$ tame smooth\\ \hline
\multirow{4}{3.1cm}{$P$ is nonconstant with $r$ distinct roots}
 & $q$ not a root of unity & $A\cong_{\mathfrak{KK}^{alg}}\CC^{r}$ & Thm \ref{main_result} Prop \ref{case4} &\\ \cline{2-5}
 & $q=1$ and $h_0\neq 0$ & $A\cong_{\mathfrak{KK}^{alg}}\CC^{r}$ & Thm \ref{main_result} &\\ \cline{2-5}
 &  $q\neq 1$, a root of unity & \multicolumn{2}{|c|}{No result}  &\\ \cline{2-5}
 & $q=1$ and $h_0= 0$  & \multicolumn{2}{|c|}{No result} & $A$ commutative\\ \hline
\end{tabular}
\end{table}

A generalized Weyl algebra $A=\CC[h](\sigma, P)$ is tame smooth if and only if $P(h)\in \CC[h]$ is a non zero constant polynomial
(see Remark \ref{GWA_not_tame_smooth}).
Hence, if $P$ is a non constant polynomial, $A$ is a generalized crossed product that is not tame smooth, and so we cannot use the results of
\cite{MR3054304}. However, in most cases we can construct an explicit faithful representation of $A$, which allows us to follow the general
strategy of~\cite{MR2240217} and \cite{MR3054304}, in order to determine the $\mathfrak{KK}^{alg}$ class of $A$.

Our main result is Theorem \ref{main_result}, which computes the isomorphism class of $A$ in $\mathfrak{KK}^{alg}$ in the following two cases:
\begin{itemize}
\item $q=1$ and $h_0\neq 0$.
\item $q$ is not a root of unity and $P$ has a root different from $\frac{h_0}{1-q}$.
\end{itemize}
In each of these cases we construct an exact triangle
\begin{equation} \label{triangulo exacto}
S A \to   A_1 A_{-1}\overset0{\to} A_0\to A,
\end{equation}
 in the triangulated category $(\mathfrak{KK}^{alg}, S)$, where $A_n$ is the subspace of degree $n$ of the $\ZZ$-graded algebra $A$ (see
 Lemma~\ref{GWA-grading}).
 In order to construct the exact triangle in~\eqref{triangulo exacto} we follow the methods of  \cite{MR3054304}: we construct a linearly split extension
$$0\to \Lambda_A\to \mcT_A\to A \to 0$$
and prove
$$\mcT_A\cong_{\mathfrak{KK}^{alg}} A_0\quad \text{and} \quad \Lambda_A\cong_{\mathfrak{KK}^{alg}} A_1A_{-1}.$$

The exact triangle in~\eqref{triangulo exacto} yields $A\cong_{\mathfrak{KK}^{alg}} A_0\oplus S(A_1A_{-1})$. The main result now follows after we prove
 $A_1A_{-1}\cong_{\mathfrak{KK}^{alg}} S\CC^{r-1}$ in Proposition~\ref{invariants_A1A-1}, since $A_0=\CC[h]\cong_{\mathfrak{KK}^{alg}} \CC$.

The main result allows to compute the isomorphism class in $\mathfrak{KK}^{alg}$ of the quantum Weyl algebra, the primitive factors $B_{\lambda}$ of
$U(\mathfrak{sl}_2)$ and the quantum weighted projective lines $\mathcal{O}(\mathbb{WP}_q(k, l))$  (see \cite{MR2989456}).

For the sake of completeness we also discuss the case of $\NN$-graded and the case of tame smooth  generalized Weyl algebras.

In the case where $A=\bigoplus_{n\in\NN} A_n$ is an $\NN$-graded locally convex algebra it can be shown that $A\cong_{\mathfrak{KK}^{alg}} A_0$. This is the
case when
\begin{itemize}
\item $P$ is nonconstant, $q$ is not a root of unity and $P$ has only $\frac{h_0}{1-q}$ as a root or
\item  $P=0$.
\end{itemize}
In these cases we obtain $A\cong_{\mathfrak{KK}^{alg}}\CC$.

In the case where $P$ is a nonzero constant polynomial, $A$ is a tame smooth generalized crossed product and the results from \cite{MR3054304} apply. In this
case there is an exact triangle
\begin{equation}
S A\to A_0\overset0{\to} A_0\to A,
\end{equation}
in the triangulated category $(\mathfrak{KK}^{alg}, S)$ and we obtain $A\cong_{\mathfrak{KK}^{alg}}  S\CC\oplus\CC$.

In the case where $q=1$ and $h_0=0$, we have $\sigma=id$ and so $A\cong \CC[h, x, y]/(xy-P)$ is a commutative algebra. This case and the case where $q\neq 1$
is a root of unity remain open.

The article is organized as follows. In section~\ref{Basic results}, we recall basic results on locally convex algebras.
Lemma~\ref{algtensor} is a technical result which asserts that the projective tensor product of the Toeplitz algebra $\mcT$ with
an algebra with a countable basis is the algebraic tensor product. In section~\ref{definition of kk} we recall the definition and properties of $kk^{alg}$
following~\cite{MR2240217} and~\cite{MR2207702}.  In section~\ref{generalized Weyl algebra}, we define generalized Weyl algebras and construct explicit
faithful representations when $q=1$ and $h_0\neq 0$, and when $q$ is not a root of unity and $P$ has a root different from $\frac{h_0}{1-q}$. In
section~\ref{computations}, we compute the isomorphism class in $\mathfrak{KK}^{alg}$ of  all noncommutative generalized Weyl algebras
$A=\CC[h](\sigma, P)$  where
$\sigma(h)=qh+h_0$ except when $q\neq 1$ is a root of unity.


\section{Basic results on locally convex algebras}
\label{Basic results}
In this section we recall some constructions in the category of locally convex algebras that are needed for the definition of the
bivariant $K$-theory $kk^{alg}$.
We follow the discussions in \cite{MR3054304} and \cite{MR2240217}. In Lemma~\ref{algtensor} we prove that the projective tensor product of the Toeplitz algebra $\mcT$ with an algebra with a countable basis is the algebraic tensor product.


\subsection{Locally convex algebras}
\begin{defn}
A locally convex algebra $A$ is a complete locally convex vector space over $\CC$ which is an algebra such that for any continuous seminorm $p$ in $A$
there is a continuous seminorm $q$ in $A$ such that $p(ab)\leq q(a)q(b)$ for all $a, b\in A$. This is equivalent to requiring the multiplication to be
jointly continuous.

A seminorm $p$ of $A$ is called submultiplicative if $p(ab)\leq p(a)p(b)$, for all $a, b\in A$. If the topology of $A$ can be defined by a family of
submultiplicative seminorms we say that $A$ is an $m$-algebra.
\end{defn}

Morphisms in the category of locally convex algebras are continuous homomorphisms. We denote by $\otimes_{\pi}$ the projective tensor product of locally convex
vector spaces (see chapter 43 in \cite{MR0225131}). This is a completion of the algebraic tensor product. The projective tensor product of two locally convex
 algebras is again a locally convex algebra. The following are examples of locally convex algebras.
\begin{enumerate}
\item All algebras with a countable basis over $\CC$. These are locally convex algebras with the topology generated by all seminorms
(Proposition 2.1 in~\cite{MR2240217}). Examples include the Weyl algebra and generalized Weyl algebras over $\CC[h]$
(see Corollary~\ref{GWA_countable_basis}).

\item $\mcC^{\infty}([0, 1])$, which is a locally convex algebra with the family of seminorms
$$p_n(f)=||f||+||f'||+\frac{1}{2}||f''||+\dots +\frac{1}{n!}||f^{(n)}||$$
where $||f||=sup\{f(t)|t\in [0, 1]\}$.
\item We define $\CC[0, 1]$ as the (closed) subalgebra of $\mcC^{\infty}([0, 1])$ of functions with all derivatives vanishing at $0$ and $1$. This is
a nuclear topological vector space (see Definition 50.1 and Theorem 50.1 in \cite{MR0225131}) and therefore, for any locally convex algebra $A$ we have
 $\CC[0, 1]\otimes_{\pi} A=A[0, 1]$, where $A[0, 1]$ is the algebra of $\mcC^{\infty}$ functions with values in $A$ and all derivatives vanishing at $0$
 and $1$. We define $A[0, 1)$ and $A(0, 1)$ as the subalgebras of $A[0, 1]$ of functions that vanish at $1$, and at $0$ and $1$ respectively.

\end{enumerate}

\begin{defn}
We denote by  $SA$ and $CA$ the algebras $A(0, 1)$ and $A[0, 1)$ and we call them the suspension and the cone of $A$ respectively.
\end{defn}

Note that $S(\cdot)$ is a functor. Given a morphism of locally convex algebras $\phi:A\to B$, there is a morphism $S(\phi):SA\to SB$ defined by
$f\mapsto \phi\circ f$. We can iterate this functor $n$ times to obtain $S^nA$ and $S^{n}(f)$.


\subsection{Diffotopies}\

An important feature of
the bivariant $K$-theory $kk^{alg}$ is the invariance with respect to differentiable homotopies. For more details on diffotopies consult Section 6.1
in~\cite{MR2340673}.

\begin{defn}
Let $\phi_0, \phi_1: A\to B$ be homomorphisms of locally convex algebras. A diffotopy between $\phi_0$ and $\phi_1$ is a homomorphism
$\Phi:A\to \mcC^{\infty}([0, 1],B)$ such that $ev_i\circ \Phi=\phi_i$. If there is a diffotopy between $\phi_0$ and $\phi_1$ we call them
diffotopic and write $\phi_0\simeq \phi_1$.
\end{defn}

Using a reparameterization of the interval we can assume that all derivatives of $\Phi$ at $0$ and $1$ vanish and therefore we can assume that a diffotopy is
given by a map $\Phi:A\to B[0, 1]$. With this characterization we can show that diffotopy is an equivalence relation.

\begin{defn}
Given two locally convex algebras $A$ and $B$, we denote by $\langle A, B\rangle$ the set of diffotopy classes of continuous homomorphisms from $A$ to $B$.
Given $\phi:A\to B$ a continuous homomorphism, we denote by $\langle \phi \rangle$ its diffotopy class.
\end{defn}

\begin{lem}\label{group_structure}
There is a group structure in $\langle A, SB\rangle$ given by concatenation. The group structures in $\langle A, S^nB\rangle$ that we get from concatenation
in different variables all agree and are abelian for $n\geq 2$.
\end{lem}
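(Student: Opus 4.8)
The plan is to mimic the classical topological argument that $\pi_1$ of a loop space is a group, adapted to the algebraic/diffotopy setting. First I would define the concatenation operation. Given two continuous homomorphisms $\phi, \psi: A \to SB = B(0,1)$, I interpret each as a family of elements of $B$ parametrized by $(0,1)$ with all derivatives vanishing at the endpoints. I define $\phi \ast \psi: A \to SB$ by running $\phi$ (suitably reparametrized) on $(0, 1/2)$ and $\psi$ on $(1/2, 1)$, using a smooth monotone reparametrization of each half-interval onto $(0,1)$ so that the result still lies in $B(0,1)$; the vanishing of all derivatives at $0$ and $1$ (which we may assume by the reparametrization remark preceding the lemma) is exactly what makes $\phi \ast \psi$ well-defined and smooth at the gluing point $1/2$. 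One checks that $\phi \ast \psi$ is again a continuous homomorphism because the operations are performed pointwise in the $(0,1)$-variable.

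Next I would verify the group axioms up to diffotopy. The key point is that all the usual homotopies from topology — associativity $(\phi \ast \psi) \ast \chi \simeq \phi \ast (\psi \ast \chi)$, the neutral element being the constant map to $0$, and the inverse of $\phi$ being $t \mapsto \phi(1-t)$ — are given by explicit reparametrizations of the interval, hence descend to diffotopies (maps into $(SB)[0,1]$) in the present setting. I would write $\langle \phi \rangle + \langle \psi \rangle := \langle \phi \ast \psi \rangle$ and check well-definedness on diffotopy classes, which again follows by composing the given diffotopies with the reparametrizations. This establishes the group structure on $\langle A, SB\rangle$.

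For the second assertion, I would use the standard Eckmann–Hilton argument. On $\langle A, S^nB\rangle$ with $n\ge 2$ there are (a priori) several binary operations $+_i$, one for each suspension coordinate $i = 1,\dots,n$, each coming from concatenation in that variable. Each $+_i$ has the constant map to $0$ as a two-sided unit, and for $i \ne j$ the operations $+_i$ and $+_j$ commute with each other as maps of sets (one can perform the concatenations in the two independent variables in either order, obtaining diffotopic — in fact literally equal up to reparametrization — results). The Eckmann–Hilton lemma then forces all the $+_i$ to coincide and to be commutative and associative, so in particular $\langle A, S^nB\rangle$ is an abelian group for $n \ge 2$, and the group structure is independent of the chosen variable.

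The main obstacle I anticipate is purely technical rather than conceptual: ensuring that concatenation lands in the correct subalgebra, i.e. that after gluing two smooth $B$-valued functions on $(0,1/2)$ and $(1/2,1)$ the result is genuinely $\mcC^\infty$ on all of $(0,1)$ with vanishing derivatives at $0$ and $1$, and that every homotopy used in the group-axiom verification can likewise be realized by a smooth reparametrization with the requisite boundary behavior. This is where the remark preceding the lemma — that by reparametrizing we may assume all derivatives of a diffotopy vanish at $0$ and $1$ — does the real work, and I would invoke it repeatedly. Once that bookkeeping is set up, everything else is a direct transcription of the classical topological proofs, and I would not belabor the routine verifications.
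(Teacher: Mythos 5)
Your proposal is correct and is exactly the standard argument (concatenation with smooth reparametrizations, then Eckmann--Hilton for $n\ge 2$) that the paper delegates to Lemma 6.4 of Cuntz--Meyer--Rosenberg rather than writing out. Since the paper's proof is just that citation, your sketch essentially reconstructs the content of the cited lemma, and the technical point you flag --- that gluing and all homotopies must be arranged to have vanishing derivatives at the endpoints --- is indeed the only place where real care is needed.
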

\begin{proof}
See Lemma 6.4 in \cite{MR2340673}.
\end{proof}

\begin{defn}
A locally convex algebra $A$ is called contractible if the identity map is diffotopic to $0$.
\end{defn}

\begin{exs}
Examples of contractible locally convex algebras are $h\CC[h]$ and $CA$. The diffotopies are given by $\phi_s:h\CC[h]\to h\CC[h]$, $\phi_s(h)=sh$ and
$\psi_s:CA\to CA$, $\psi_s(f)(t)=f(st)$, respectively. Note that the algebras $(h-h_0)\CC[h]$ are isomorphic to $h\CC[h]$ and therefore are also contractible.
\end{exs}

We conclude this section with a note on $\NN$-graded algebras.

\begin{lem}\label{N-graded}
Let $A=\bigoplus_{n\in \NN} A_n$ be an $\NN$-graded locally convex algebra, then $A$ is diffotopy equivalent to $A_0$. In particular
$\CC[h]$ is diffotopy equivalent to $\CC$.
\end{lem}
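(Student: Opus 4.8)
The plan is to exhibit an explicit diffotopy contracting $A$ onto $A_0$ by rescaling the grading, mimicking the contraction $\phi_s(h)=sh$ of $h\CC[h]$ already used in the examples. First I would fix, for each $s\in[0,1]$, the linear map $\psi_s\colon A\to A$ which is multiplication by $s^n$ on the homogeneous component $A_n$; concretely, on an element $a=\sum_n a_n$ with $a_n\in A_n$ one sets $\psi_s(a)=\sum_n s^n a_n$. Since the grading is by $\NN$, the powers $s^n$ are bounded, and one checks $\psi_s$ is a continuous algebra homomorphism: if $a_n\in A_n$ and $b_m\in A_m$ then $\psi_s(a_n b_m)=s^{n+m}a_nb_m=\psi_s(a_n)\psi_s(b_m)$, because $a_nb_m\in A_{n+m}$. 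Continuity with respect to the seminorms defining the locally convex topology must be verified; for algebras with countable basis this is automatic since every seminorm is continuous, and in general one argues that the projection onto each $A_n$ is continuous and that the series converges appropriately (this is where one uses that $A$ is a \emph{topological} direct sum, which I would take as part of the meaning of ``$\NN$-graded locally convex algebra'').

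Next I would check that $s\mapsto\psi_s$ assembles into a single homomorphism $\Psi\colon A\to \mcC^{\infty}([0,1],A)$, i.e.\ that for each fixed $a$ the path $s\mapsto\psi_s(a)$ is smooth with values in $A$. On a homogeneous element $a_n$ this path is $s\mapsto s^n a_n$, manifestly smooth; on a general element it is the (convergent) sum of such paths, and smoothness of the sum follows from the continuity/summability estimates from the previous step. Then $ev_1\circ\Psi=\psi_1=\mathrm{id}_A$ and $ev_0\circ\Psi=\psi_0$, which is the composite $A\twoheadrightarrow A_0\hookrightarrow A$ (projection onto the degree-zero part followed by inclusion). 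Composing on the other side, the projection $\pi_0\colon A\to A_0$ and the inclusion $\iota_0\colon A_0\hookrightarrow A$ satisfy $\pi_0\circ\iota_0=\mathrm{id}_{A_0}$ on the nose, while $\iota_0\circ\pi_0=\psi_0\simeq\psi_1=\mathrm{id}_A$ via $\Psi$. Hence $\iota_0$ and $\pi_0$ are mutually inverse up to diffotopy, so $A$ is diffotopy equivalent to $A_0$.

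For the final assertion, I would apply this to $A=\CC[h]=\bigoplus_{n\in\NN}\CC h^n$ with its standard grading, whose degree-zero part is $\CC$, yielding $\CC[h]\simeq\CC$; equivalently one notes this is exactly the contraction-type diffotopy already recorded for $h\CC[h]$ in the examples, extended by the identity on $\CC$.

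The main obstacle I anticipate is purely topological: making precise that the formula $\psi_s(\sum_n a_n)=\sum_n s^n a_n$ defines a \emph{continuous} endomorphism and that $\Psi$ lands in $\mcC^{\infty}([0,1],A)=\CC^{\infty}([0,1])\,\widehat\otimes\, A$ rather than merely being a family of set-maps. This requires knowing that the grading decomposition is topological (the coordinate projections $\pi_n$ are continuous and $a=\sum\pi_n(a)$ converges) and controlling the relevant seminorm estimates uniformly in $s\in[0,1]$; once that is granted, the algebraic identities are immediate and the rest is routine.
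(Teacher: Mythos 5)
Your proposal is correct and is essentially identical to the paper's proof, which uses the same rescaling diffotopy $\phi_t(a_n)=t^n a_n$ connecting the identity at $t=1$ to the retraction onto $A_0$ at $t=0$. The additional care you take about continuity and the topological nature of the grading decomposition is a reasonable elaboration of what the paper leaves implicit.
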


\begin{proof}
The diffotopy is given by the family of morphisms $\phi_t:A\to A$, $t\in [0, 1]$, sending an element $a_n\in A_n$ to $t^{n}a_n$. When $t=1$ we recover the
identity and when $t=0$ the morphism is a retraction of $A$ onto $A_0$.
\end{proof}

\subsection{Extensions of locally convex algebras}\
\label{extlca}

In this section we define extensions of locally convex algebras and their classifying maps. Extensions play a key role in the definition of $kk^{alg}$.

\begin{defn}
An extension of locally convex algebras
$$0\to I\to E\to B\to 0$$
is linearly split if there is a continuous linear section $s:B\to E$. Similarly we say that an extension of length $n$
$$0\to I \to E_1 \to \cdots \to E_n \to B \to 0$$
is linearly split if there is a continuous linear map of degree $-1$ such that $ds+sd=id$, where $d$ is the differential of the chain complex.
\end{defn}

\begin{ex}
Let $A$ be a locally convex algebra. The extension
$$0\to SA\to CA\to A\to 0$$
is called the cone extension of $A$. It is linearly split with continuous linear section $s:A\to CA$ given by $a\in A\mapsto f\in CA$ with $f(t)=(1-\psi(t))a$,
where $\psi:[0, 1]\to [0, 1]$ is a $C^{\infty}$ bijection with $f(0)=0$, $f(1)=1$ and all derivatives vanishing at $0$ and at $1$.
\end{ex}

Now, we define the tensor algebra which has a universal property in the category of locally convex algebras. It is a completion of the usual algebraic tensor
algebra. Let $V$ be a complete locally convex vector space. The algebraic tensor algebra is defined as
$$T_{alg}V=\bigoplus_{n=1}^{\infty}V^{\otimes n}.$$
Notice that we are considering a non-unital algebraic tensor algebra. There is a linear map $\sigma:V\to T_{alg}V$ mapping $V$ into the first summand. We
topologize $T_{alg}V$ with all seminorms of the form $\alpha\circ \phi$,
where $\phi$ is any homomorphism from $T_{alg}V$ into a locally convex algebra $B$ such that $\phi\circ\sigma$ is continuous on $V$ and $\alpha$ is a
continuous seminorm on $B$.
\begin{defn}
The tensor algebra $TV$ is the completion of $T_{alg}V$ with respect to the family of seminorms $\{\alpha\circ \phi\}$ defined above.
\end{defn}

The tensor algebra $TV$ is a locally convex algebra that satisfies the following universal property.

\begin{prop}
Given a continuous linear map $s:V\to B$ from a complete locally convex vector space $V$ to a locally convex algebra $B$ there is a unique morphism of locally
convex algebras $\tau:TV\to B$ such that $\tau\circ\sigma=s$. The morphism $\tau$ is defined by
 $\tau(x_1\otimes x_2\otimes\dots\otimes x_n)=s(x_1)s(x_2)\dots s(x_n)$ where $x_i\in V$.
\end{prop}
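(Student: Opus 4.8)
The plan is to first build $\tau$ on the dense subalgebra $T_{alg}V$ and then extend it by continuity. On $T_{alg}V=\bigoplus_{n=1}^{\infty}V^{\otimes n}$ I would define $\tau$ on each homogeneous component $V^{\otimes n}$ as the linearization of the map $(x_1,\dots,x_n)\mapsto s(x_1)s(x_2)\cdots s(x_n)$; this $n$-linear map is continuous because the product on $B$ is jointly continuous and $s$ is continuous, so it factors through the algebraic tensor power $V^{\otimes n}$, and summing over $n$ gives a linear map $\tau\colon T_{alg}V\to B$. On elementary tensors one has $\tau\big((x_1\otimes\cdots\otimes x_m)(y_1\otimes\cdots\otimes y_n)\big)=s(x_1)\cdots s(x_m)s(y_1)\cdots s(y_n)=\tau(x_1\otimes\cdots\otimes x_m)\,\tau(y_1\otimes\cdots\otimes y_n)$, so $\tau$ is an algebra homomorphism, and $\tau\circ\sigma=s$ by construction.

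Next I would verify that $\tau$ is continuous for the topology on $T_{alg}V$; this is the heart of the argument and the step I expect to be the main (though short) obstacle, since it is exactly where the nonstandard topology on $T_{alg}V$ enters. By definition that topology is generated by the seminorms $\alpha\circ\phi$, ranging over all homomorphisms $\phi$ from $T_{alg}V$ to a locally convex algebra with $\phi\circ\sigma$ continuous and over all continuous seminorms $\alpha$ on the target. Since $\tau\circ\sigma=s$ is continuous by hypothesis, $\tau$ is itself admissible as such a $\phi$; hence for every continuous seminorm $\alpha$ on $B$ the seminorm $\alpha\circ\tau$ is one of the defining seminorms of $T_{alg}V$, and therefore $\tau\colon T_{alg}V\to B$ is continuous.

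Since $B$ is complete and $T_{alg}V$ is dense in its completion $TV$, the continuous linear map $\tau$ extends uniquely to a continuous linear map $\overline{\tau}\colon TV\to B$. To see $\overline{\tau}$ remains multiplicative, observe that the two maps $TV\times TV\to B$, $(u,v)\mapsto\overline{\tau}(uv)$ and $(u,v)\mapsto\overline{\tau}(u)\overline{\tau}(v)$, are both continuous (joint continuity of multiplication in $TV$ and in $B$ together with continuity of $\overline{\tau}$) and agree on the dense subset $T_{alg}V\times T_{alg}V$, hence everywhere. Finally, for uniqueness, if $\tau'\colon TV\to B$ is any morphism with $\tau'\circ\sigma=s$, then $\tau'$ and $\overline{\tau}$ coincide on $\sigma(V)$, hence on the subalgebra generated by $\sigma(V)$, which is precisely $T_{alg}V$, and then on all of $TV$ by density and continuity. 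This establishes the stated universal property and the explicit formula for $\tau$.
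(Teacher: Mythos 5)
Your proof is correct. The paper itself gives no argument here---it simply cites Lemma 6.9 of Cuntz--Meyer--Rosenberg---and what you have written is precisely the standard proof behind that citation: the topology on $T_{alg}V$ is \emph{defined} so that any homomorphism $\phi$ with $\phi\circ\sigma$ continuous is automatically continuous, which is exactly the observation you make at the key step, after which extension to the completion and uniqueness are routine. One small remark: to factor the map $(x_1,\dots,x_n)\mapsto s(x_1)\cdots s(x_n)$ through the \emph{algebraic} tensor power $V^{\otimes n}$ you only need multilinearity, not continuity, so that part of your first paragraph is harmlessly superfluous; continuity only matters where you actually use it, namely in verifying that $\alpha\circ\tau$ belongs to the defining family of seminorms.
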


\begin{proof}
See Lemma 6.9 in \cite{MR2340673}.
\end{proof}

In particular, if $A$ is a locally convex algebra, the identity map $id:A\to A$ induces a morphism $\pi: TA\to A$.

We use the universal property of $TA$ to construct a universal extension. There is an extension
$$
0 \to JA \to TA \overset{\pi}{\to}  A \to 0
$$
where $JA$ is defined as the kernel of $\pi: TA\to A$, which has a canonical continuous linear section given by $\sigma:A\to TA$. This extension is universal in the
sense that given any extension of locally convex algebras $0\to I\to E \to B \to 0$ with continuous linear section $s$ and a morphism $\alpha: A\to B$, there
is a morphism of extensions
$$
\xymatrix{
0\ar[r] & JA \ar[r] \ar[d]_{\gamma}  & TA \ar[r]\ar[d]_{\tau} & A \ar[r]\ar[d]_{\alpha}  & 0 \\
0\ar[r] & I\ar[r]  & E \ar[r] &  B\ar[r] & 0
}
$$
where $\tau:TA\to E$ is the morphism induced by the continuous linear map $s\circ \alpha: A\to E$ and $\gamma: JA\to I$ is the restriction of $\tau$.

Notice that $J(\cdot)$ is a functor. Given a morphism $\alpha:A\to B$, consider the extension $0\to JB\to TB\to B\to 0$ with its canonical continuous linear
 section. Then we define $J(\alpha):JA\to JB$ in the natural way. We can iterate this construction $n$ times to obtain $J^nA$ and $J^{n}(\alpha)$.

We observe that the map $\gamma:JA\to I$ is unique up to diffotopy. Given two continuous linear sections $s_1$ and $s_2$,  the smooth family of continuous
linear sections $s_t=ts_1+(1-t)s_2$  induces a diffotopy $\gamma_t$ which connects $s_1$ and $s_2$. Hence the corresponding $\gamma$'s are diffotopic.

\begin{defn}
The morphism $\gamma:JA\to I$ is called the classifying map of the extension $0\to I\to E\to B\to 0$ and the morphism $\alpha:A\to B$. It is well defined up
 to diffotopy.
\end{defn}

Similarly, we can define the classifying map of an extension
$$0\to I \to E_1 \to \cdots \to E_n \to B \to 0$$
and a morphism $\alpha:A\to B$ to be the map $\gamma:J^{n}A\to I$ in
$$
\xymatrix{
0\ar[r] & J^{n}A \ar[r]\ar[d]_{\gamma}  & T(J^{n-1}A) \ar[r] \ar[d] & \cdots \ar[r]&  TA \ar[r] \ar[d] & A \ar[r]\ar[d]_{\alpha}  & 0 \\
0\ar[r] & I \ar[r]  & E_1 \ar[r] & \cdots \ar[r] & E_n\ar[r] &  B\ar[r] & 0
}
$$
which is also unique up to diffotopy.


\subsection{The algebra of smooth compact operators and the smooth Toeplitz algebra}\

We define the algebra $\mcK$ of smooth compact operators which play the role of the $\Cstar$-algebra of compact operators used in Kasparov's $KK$-theory.
Then we define the smooth Toeplitz algebra $\mcT$ and prove that the projective tensor product of $\mcT$ with
an algebra with a countable basis is the algebraic tensor product.

\begin{defn}
The algebra of smooth compact operators $\mcK$ is defined as the algebra of $\NN\times \NN$ matrices $a=(a_{ij})$ such that
$p_n(a)=\sum_{i, j\in \NN} (1+i+j)^n|a_{i, j}|$ is finite for $n\in \NN$. The topology is defined by the seminorms $p_n$.
\end{defn}

The algebra $\mcK$ with the seminorms $p_n$ is a locally convex algebra, which is isomorphic to the space $s$ of rapidly decreasing sequences as a locally
convex vector space.

\begin{lem}\label{spaces_isomorphic_to_s}
The locally convex spaces $\mcK$, $s\otimes_{\pi} s$ and $s\oplus s$  are isomorphic to $s$.
\end{lem}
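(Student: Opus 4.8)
The plan is to reduce all three statements to two facts: that $s$ is isomorphic to the space $s(\NN\times\NN)$ of rapidly decreasing double sequences, and the classical behaviour of weighted $\ell^1$ spaces under the projective tensor product.

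First I would dispose of $\mcK$ and $s\oplus s$, which only require elementary reindexing. Since $1+i+j\le(1+i)(1+j)\le(1+i+j)^2$, the two weight systems $\{(1+i+j)^n\}_n$ and $\{(1+i)^n(1+j)^n\}_n$ on $\NN\times\NN$ define the same Fr\'echet topology, so by definition $\mcK$ is the K\"othe space $s(\NN\times\NN)$. Fix a bijection $\beta\colon\NN\to\NN\times\NN$ (for instance Cantor's pairing function), chosen so that $1+k$ and $1+i+j$, where $\beta(k)=(i,j)$, are each dominated by a fixed power of the other; then $\beta$ transports the weight system on $\NN\times\NN$ to a weight system on $\NN$ equivalent to $\{(1+k)^n\}_n$, giving a topological isomorphism $\mcK\cong s(\NN\times\NN)\cong s$. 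Likewise, splitting $\NN$ into even and odd integers and using that $(1+2k)^n$ and $(2+2k)^n$ are each comparable to $(1+k)^n$ yields $s\cong s\oplus s$.

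For $s\otimes_\pi s\cong s$ I would write $s$ as the projective limit $\varprojlim_n\ell^1_n$, where $\ell^1_n=\ell^1(\NN,(1+k)^n)$ and the linking maps are the inclusions. After rescaling by the weight, each $\ell^1_n$ is isometrically $\ell^1(\NN)$, so the classical identification $\ell^1(X)\otimes_\pi\ell^1(Y)\cong\ell^1(X\times Y)$ gives $\ell^1_n\otimes_\pi\ell^1_m\cong\ell^1(\NN\times\NN,(1+i)^n(1+j)^m)$. Because $s$ is a nuclear Fr\'echet space, the completed projective tensor product commutes with the countable projective limits defining $s$ (here $\otimes_\pi=\otimes_\varepsilon$, and the $\varepsilon$-product passes to projective limits), whence
\[
s\otimes_\pi s\;\cong\;\varprojlim_n\bigl(\ell^1_n\otimes_\pi\ell^1_n\bigr)\;\cong\;\varprojlim_n\ell^1\bigl(\NN\times\NN,(1+i)^n(1+j)^n\bigr)\;=\;s(\NN\times\NN)\;\cong\;s,
\]
the last isomorphism by the previous paragraph. (Alternatively, $s\otimes_\pi s\cong s$ is standard for nuclear K\"othe spaces and may simply be cited.)

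The one genuine obstacle is the interchange of $\otimes_\pi$ with the projective limit; this is precisely where nuclearity of $s$ enters, concretely through the rapid decay of the weights, which makes each linking map $\ell^1_{n+1}\hookrightarrow\ell^1_n$ nuclear. One should either invoke the corresponding general theorem or check directly that every rapidly decreasing matrix on $\NN\times\NN$ admits an absolutely convergent representation $\sum_k\lambda_k\,x_k\otimes y_k$ with $(\lambda_k)_k$ rapidly decreasing and $(x_k)_k,(y_k)_k$ bounded in $\ell^1_n$ for every $n$. Everything else reduces to comparing polynomial weight systems.
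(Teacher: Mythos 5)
Your argument is correct, but it is genuinely more than what the paper does: the paper's entire proof of this lemma is a citation to Valdivia (Chapter 3, Section 1.1), whereas you supply the actual Köthe-space argument. The two easy parts are exactly right: the weight comparison $1+i+j\le(1+i)(1+j)\le(1+i+j)^2$ identifies $\mcK$ with $s(\NN\times\NN)$, and a polynomially controlled pairing bijection (resp.\ the even/odd splitting) handles $s(\NN\times\NN)\cong s$ and $s\oplus s\cong s$. You also correctly isolate the one nontrivial point, namely $s\otimes_{\pi}s\cong s(\NN\times\NN)$; this is classical (it is essentially Theorem 51.6 in Tr\`eves, a reference the paper already uses elsewhere for the Fourier-series identification of $C^{\infty}(S^1)$), and your fallback of exhibiting an absolutely convergent representation $\sum_k\lambda_k\,x_k\otimes y_k$ with $(\lambda_k)$ rapidly decreasing and $(x_k),(y_k)$ bounded in $s$ is exactly how that theorem is proved. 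Two small cautions: the single-step linking map $\ell^1_{n+1}\hookrightarrow\ell^1_n$ is a diagonal operator with entries $(1+k)^{-1}$, which is \emph{not} nuclear on $\ell^1$ (nuclearity of a diagonal operator on $\ell^1$ requires summable entries), so you should pass to $\ell^1_{n+2}\hookrightarrow\ell^1_n$; and the interchange of $\hat\otimes_{\pi}$ with the projective limit should either be quoted in a form that applies to reduced countable limits of Banach spaces with nuclear linking maps, or sidestepped entirely by the direct verification you sketch, which is the cleaner option here. With those adjustments your proof is complete and self-contained, which is arguably preferable to the paper's bare citation.
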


\begin{proof}
The proofs of these facts can be found in \cite{MR671092} Chapter 3 Section 1.1.
\end{proof}

We also define the smooth Toeplitz algebra which plays the role of the Toeplitz $\Cstar$-algebra. The Fourier series gives an isomorphism of locally convex
spaces between $C^{\infty}(S^1)$ and the space of rapidly decreasing Laurent series (see Theorem 51.3 in \cite{MR0225131})
$$C^{\infty}(S^{1})\cong\left\{\sum_{i\in \ZZ}a_iz^{i} \ | \ \sum_{i\in \ZZ} |1+i|^n|a_i|<\infty, \ \forall n\in \NN\right\},$$
where $z$ corresponds to the function $z:S^{1}\to \CC$, $z(t)=t$. This space is isomorphic to the space $s$ of rapidly decreasing sequences.

\begin{defn}
The smooth Toeplitz algebra $\mcT$ is defined by the direct sum of locally convex vector spaces $\mcT=\mcK\oplus C^{\infty}(S^{1})$. In order to define the
multiplication, we define $v_k=(0, z^{k})$ and write $x$ for an element $(x, 0)$ with $x\in \mcK$. We denote the elementary matrices in $\mcK$ by $e_{ij}$ and
set $e_{ij}=0$ for all $i, j<0$. The multiplication is defined by the following relations
$$e_{ij}e_{kl}=\delta_{jk}e_{il},\quad v_ke_{ij}=e_{(i+k), j},\quad e_{ij}v_k=e_{i, (j-k)},$$
for all $i, j, k, l\in \ZZ$ and
$$v_kv_{-l}=\begin{cases} v_{k-l}(1-e_{00}-e_{11}-\dots e_{l-1, l-1})&, l>0 \\ v_{k-l} &, l\leq 0,\end{cases}$$
for all $k, l\in \ZZ$.

Denote $v_1$ and $v_{-1}$ by $S$ and $S^*$ respectively.
\end{defn}

There is a linearly split extension
$$0\to \mcK\to \mcT\to C^{\infty}(S^{1}) \to 0$$
where the continuous linear section $C^{\infty}(S^{1})\to \mcT$ is defined by $z\mapsto S$.

The smooth Toeplitz algebra is generated, as a locally convex algebra, by $S$ and $S^*$. In fact, it satisfies a universal property in the category of
$m$-algebras.

\begin{lem}[Satz 6.1 in \cite{MR1456322}]\label{universal_smooth_toeplitz}
$\mcT$ is the universal unital $m$-algebra generated by two elements $S$ and $S^{*}$ satisfying the relation $S^{*}S=1$ whose topology is defined by a
family of submultiplicative seminorms $\{p_n\}_{n\in \NN}$ with the condition that there are positive constants $C_n$ such that
\begin{equation}\label{bounded_conditions_toeplitz}
p_n(S^k)\leq C_n(1+k^n) \quad \text{and} \quad p_n(S^{*n})\leq C_n(1+k^n).
\end{equation}
\end{lem}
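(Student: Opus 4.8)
The statement (Lemma on the universal property of $\mcT$) is essentially due to Cuntz (Satz 6.1 in \cite{MR1456322}); I would give a proof that (i) verifies $\mcT$ itself satisfies the hypotheses of the universal object, and (ii) shows that any $m$-algebra with two such generators receives a unique morphism from $\mcT$. Throughout, the key structural fact is that in the algebraic unital algebra generated by $S,S^*$ with $S^*S=1$, every element is a finite $\CC$-linear combination of the monomials $S^k(S^*)^l$ ($k,l\ge 0$), and the only relations are consequences of $S^*S=1$; this is the well-known Toeplitz presentation, and one checks that in $\mcT$ the elements $v_1=S$, $v_{-1}=S^*$ together with the identity generate a dense subalgebra, with $e_{ij}=S^i(1-SS^*)(S^*)^j$ recovering the compacts.

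\smallskip

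\textbf{Step 1: $\mcT$ satisfies the relation and the seminorm bounds.} First I would record that in $\mcT$ one has $S^*S=v_{-1}v_1=v_0=1$ directly from the multiplication table (the case $l=1>0$ gives $v_{-1}v_1=v_0(1-e_{00})$; one must be careful here and use instead $v_0=1$, i.e. note $v_1 v_{-1}=1-e_{00}\ne 1$ but $v_{-1}v_1=1$), so $S,S^*$ are generators satisfying $S^*S=1$. Then, using the identification of $\mcT=\mcK\oplus C^\infty(S^1)$ with a subspace of rapidly decreasing sequences and the explicit seminorms $p_n$ on $\mcK$ and the Fourier seminorms on $C^\infty(S^1)$, I would estimate $p_n(S^k)$. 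Since $S^k=v_k$ lies in the $C^\infty(S^1)$ summand, $p_n(S^k)=|1+k|^n$, which is $\le C_n(1+k^n)$; similarly $p_n(S^{*k})=|1-k|^n\le C_n(1+k^n)$. Hence $\mcT$ is an $m$-algebra of the required type.

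\smallskip

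\textbf{Step 2: universality.} Let $B$ be a unital $m$-algebra with submultiplicative seminorms $\{q_n\}$, generators $T,T^*$ with $T^*T=1$ and $q_n(T^k), q_n(T^{*k})\le C_n(1+k^n)$. Define on the algebraic level the unique unital homomorphism $\phi_{alg}\colon \mcT_{alg}\to B$ from the dense subalgebra $\mcT_{alg}=\CC\langle S,S^*\rangle/(S^*S-1)$ by $S\mapsto T$, $S^*\mapsto T^*$; this is well-defined because the Toeplitz relations defining $\mcT_{alg}$ are satisfied by $T,T^*$. To see $\phi_{alg}$ extends continuously, I would show each $q_n\circ\phi_{alg}$ is dominated by a seminorm of $\mcT$: writing a general element of $\mcT_{alg}$ in the normal form $\sum_{k,l\ge 0} a_{kl}\,S^k(1-SS^*)(S^*)^l + \sum_{m\in\ZZ} b_m S^{(m)}$ (where $S^{(m)}$ means $S^m$ for $m\ge0$ and $S^{*(-m)}$ for $m<0$), apply submultiplicativity of $q_n$ together with $q_n(1-TT^*)\le 1+q_n(T)q_n(T^*)\le 1+C_n^2$ and the polynomial bounds to get $q_n(\phi_{alg}(x))\le (\text{const})\cdot p_{n'}(x)$ for a suitable $n'$. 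Completeness of $B$ then yields the unique continuous extension $\phi\colon\mcT\to B$. Uniqueness is immediate: any continuous unital homomorphism is determined on the generators $S,S^*$, hence on the dense subalgebra $\mcT_{alg}$, hence everywhere.

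\smallskip

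\textbf{Main obstacle.} The delicate point is Step 2's continuity estimate: one must control $q_n$ on the compact part in terms of the generator bounds, which requires that the normal-form decomposition of an element of $\mcT$ be controlled by the rapidly-decreasing-sequence seminorms $p_n$ — i.e. that the coefficients $a_{kl}, b_m$ of $x$ have rapid decay comparable to $p_{n'}(x)$. Getting the right index shift $n\mapsto n'$ (one loses a fixed number of derivatives, or gains a polynomial factor, because $(1+k^n)(1+l^n)\le (1+(k+l))^{2n}$ and there are $\sim N^2$ terms of "length $\le N$") is the only genuinely technical part. Since this is precisely Satz 6.1 of \cite{MR1456322}, for the paper it suffices to cite that reference and sketch Steps 1–2; I would present the proof as: "This is Satz 6.1 in \cite{MR1456322}. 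We indicate the argument," followed by the two steps above in compressed form.
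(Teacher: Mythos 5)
Your proposal is correct and takes the same route as the paper, which gives no argument at all beyond attributing the lemma to Satz 6.1 of Cuntz's \emph{Bivariante $K$-Theorie f\"ur lokalkonvexe Algebren und der Chern-Connes-Charakter}; your two steps (checking $S^*S=v_{-1}v_1=v_0=1$ and the polynomial seminorm bounds on $v_{\pm k}$, then extending the algebraic homomorphism from the normal form $\sum a_{kl}S^k(1-SS^*)(S^*)^l+\sum b_m S^{(m)}$ by the submultiplicativity estimate) are a faithful outline of Cuntz's proof, and you correctly identify the only delicate point, namely the index shift in dominating $q_n\circ\phi_{alg}$ by some $p_{n'}$. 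Presenting it as a citation plus this compressed sketch is exactly appropriate here.
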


The following diffotopy is due to \cite{MR1456322}. In the context of $\Cstar$-algebras a homotopy like this one is used to prove Bott periodicity and
the Pimsner-Voiculescu sequence.

\begin{lem}[Lemma 6.2 in \cite{MR1456322}]\label{ToepDiffo}
There is a unital diffotopy $\phi_t:\mcT\to \mcT\otimes_{\pi}\mcT$ such that
$$\phi_t(S)=S^2S^{*}\otimes 1 + f(t)(e\otimes S)+ g(t)(Se\otimes 1)$$
$$\phi_t(S^{*})=SS^{*2}\otimes 1 + \overline{f(t)}(e\otimes S^{*})+ \overline{g(t)}(eS^{*}\otimes 1)$$
where $f, g\in \CC[0, 1]$ are such that $f(0)=0$, $f(1)=1$, $g(0)=1$ and $g(1)=0$.
\end{lem}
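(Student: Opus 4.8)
The statement to prove is Lemma~\ref{ToepDiffo}, which asserts the existence of a unital diffotopy $\phi_t:\mcT\to\mcT\otimes_\pi\mcT$ given by the two explicit formulas. The plan is to use the universal property of $\mcT$ from Lemma~\ref{universal_smooth_toeplitz}: since $\mcT$ is the universal unital $m$-algebra on generators $S,S^*$ with $S^*S=1$ subject to the polynomial growth bounds~\eqref{bounded_conditions_toeplitz}, it suffices to exhibit, for each $t\in[0,1]$, elements $s_t,s_t^*\in\mcT\otimes_\pi\mcT$ satisfying $s_t^*s_t=1$ whose powers satisfy the required growth estimates uniformly, and then to check that the assignment $t\mapsto\phi_t$ is smooth (indeed factors through $\mcT\to(\mcT\otimes_\pi\mcT)[0,1]$, using that $f,g\in\CC[0,1]$ have all derivatives vanishing at the endpoints).

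First I would set $e=e_{00}\in\mcK\subset\mcT$, so $e=1-SS^*$ is the projection onto the $0$th coordinate, and define $s_t$ and $s_t^*$ by the stated formulas. The first verification is the relation $s_t^*s_t=1$. Here one expands the product using $S^*S=1$, $SS^*=1-e$, $e^2=e$, $eS=0$, $S^*e=0$, and $e(e\otimes S)$-type bookkeeping in the tensor factors; the cross terms involving $f,g$ must cancel or combine, and the identity $|f(t)|^2+|g(t)|^2$ does not actually need to equal $1$ — rather the algebraic relations $eS=0$ and $S^*e=0$ kill the dangerous terms, and what survives assembles to $1\otimes1$. (This is the standard ``rotation'' trick: $S^2S^*\otimes1$, $e\otimes S$, $Se\otimes1$ are like partial isometries with orthogonal ranges and co-orthogonal sources, so a ``unitary'' combination of them is again an isometry regardless of the scalar path, as long as $f(0)=0,g(0)=1$ and $f(1)=1,g(1)=0$ give the right endpoints.) I would carry this out carefully since sign/placement errors in the tensor factors are easy to make.

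Next I would verify the endpoint conditions: at $t=0$ one gets $\phi_0(S)=S^2S^*\otimes1+Se\otimes1=(S^2S^*+Se)\otimes1=(S(SS^*+e))\otimes1=S\otimes1$, using $SS^*+e=1$; so $\phi_0$ is the map $a\mapsto a\otimes1$. At $t=1$ one gets $\phi_1(S)=S^2S^*\otimes1+e\otimes S$, which is the standard embedding implementing ``$S\mapsto$ (shift $\oplus$ $S$)'' — the map used for Bott periodicity; in any case both $\phi_0$ and $\phi_1$ are honest unital homomorphisms $\mcT\to\mcT\otimes_\pi\mcT$, and the point of the lemma is just that they are connected by a diffotopy, so I only need to record what they are, not identify them further. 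Then the growth estimates: I would bound $p_n^{\otimes}((s_t)^k)$ and $p_n^{\otimes}((s_t^*)^k)$ on $\mcT\otimes_\pi\mcT$. Expanding $(s_t)^k$ produces a sum of words in the three building blocks; because $S$ and $e$ multiply with the relations above, most words collapse, and a word of length $k$ contributes a term like $S^{k+1}S^*\otimes1$ plus lower-order ``boundary'' terms of the form $S^{j}e\otimes S^{k-1-j}$ or $S^j\otimes S^{k-j}$ times bounded scalars $f,g$; each factor $S^m$ has $p_n(S^m)\le C_n(1+m^n)$ and $e$ has bounded seminorm, so after summing the $O(k)$ surviving words one gets $p_n^{\otimes}((s_t)^k)\le C_n'(1+k^{n+1})$ uniformly in $t$ (the polynomial degree may rise but stays bounded, which is all the universal property needs). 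The main obstacle is precisely this combinatorial expansion: organizing the powers $(s_t)^k$ so that the collapsing is transparent and the uniform polynomial bound is visible. Finally, smoothness of $t\mapsto\phi_t$ follows because the only $t$-dependence is through $f(t),g(t)\in\CC[0,1]$, so $\phi_\bullet$ factors as $\mcT\to(\mcT\otimes_\pi\mcT)[0,1]$, giving a genuine diffotopy; invoking Lemma~\ref{universal_smooth_toeplitz} to produce $\phi_t$ from $s_t,s_t^*$ completes the argument.
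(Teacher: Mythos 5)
The paper does not prove this lemma at all: it is quoted verbatim from Cuntz (Lemma 6.2 of the cited reference), and the remark immediately following it records precisely the strategy you propose, namely to invoke the universal property of Lemma~\ref{universal_smooth_toeplitz} so that one only has to check the relation $S^*S=1$ for the images of $S,S^*$ and the growth bounds~\eqref{bounded_conditions_toeplitz}. Your overall plan (verify the isometry relation, identify the endpoints, bound the powers, and note that the $t$-dependence sits entirely in $f,g\in\CC[0,1]$), your endpoint computation $\phi_0(S)=S\otimes 1$, and your sketch of the polynomial growth estimate are all sound.

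There is, however, a genuine error at the central step. You assert that the identity $|f(t)|^2+|g(t)|^2=1$ ``does not actually need to hold'' and that the relations $eS=0$, $S^*e=0$ by themselves force $\phi_t(S^*)\phi_t(S)=1$. They do not. Writing $A=S^2S^*\otimes 1$, $B=e\otimes S$, $C=Se\otimes 1$, the relations $eS=0$ and $S^*e=0$ do kill all six cross terms in $(A^*+\overline{f}B^*+\overline{g}C^*)(A+fB+gC)$, but the three diagonal terms give
$$A^*A=SS^*\otimes 1=(1-e)\otimes 1,\qquad \overline{f}f\,B^*B=|f|^2(e\otimes 1),\qquad \overline{g}g\,C^*C=|g|^2(e\otimes 1),$$
so that
$$\phi_t(S^*)\phi_t(S)=1\otimes 1+\bigl(|f(t)|^2+|g(t)|^2-1\bigr)(e\otimes 1),$$
which equals $1\otimes 1$ if and only if $|f|^2+|g|^2\equiv 1$. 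Without that normalization $\phi_t$ fails the defining relation of $\mcT$, the universal property cannot be invoked, and no homomorphism (let alone a diffotopy) is obtained; the same computation refutes your parenthetical claim that a combination of isometries with orthogonal ranges and co-orthogonal sources is an isometry ``regardless of the scalar path'' --- the coefficient vector must have norm one. The condition $f\overline{f}+g\overline{g}=1$ is part of Cuntz's original Lemma 6.2 and is tacitly assumed in the statement here; it is compatible with the prescribed endpoint values and with membership in $\CC[0,1]$ (take $f=\sin\theta$, $g=\cos\theta$ for a suitable reparameterization $\theta$). With that hypothesis restored, the rest of your argument goes through.
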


Note that $\phi_0(S)=S\otimes 1$ and $\phi_1(S)=S^{2}S^{*}\otimes 1+e\otimes S$. Lemma \ref{universal_smooth_toeplitz} implies that, in order to define a
morphism from $\mcT$ to $\mcT\otimes_{\pi}\mcT$, we only need to check the relations on $S$ and $S^*$ and the bounds of (\ref{bounded_conditions_toeplitz}).

We finish this section with a result for tensoring algebras with a countable basis over $\CC$ equipped with the fine topology and the Toeplitz algebra.
 Although the result is known to experts, we give all the details since it allows us to prove Lemma \ref{algtensor}, which is a key ingredient in
 Proposition~\ref{Toeplitzext}, one of our main technical results.

\begin{lem}\label{lca_tensor_s}
The locally convex space $A\otimes_{\pi}s$ is isomorphic to the space $F$ of sequences $\{x_n\}_{n\in\NN}\subseteq A$ such that
$$||x||_{\rho, k}=\sum_{n\in\NN}|1+n|^k\rho(x(n))$$
is finite for all $k\in \NN$ and any continuous seminorm $\rho$ on $A$, where the topology on $F$ is defined by the seminorms $||\cdot||_{\rho,k}$.
\end{lem}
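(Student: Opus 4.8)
The plan is to identify $A\otimes_\pi s$ with a concrete sequence space and to do so by exhibiting mutually inverse continuous maps on both the algebraic level and then passing to completions. First I would recall that $s$ is, by definition, the space of rapidly decreasing sequences $\{a_n\}_{n\in\NN}\subseteq\CC$ with $\sum_n |1+n|^k|a_n|<\infty$ for all $k$, with the topology generated by the seminorms $\|a\|_k=\sum_n|1+n|^k|a_n|$; this is one of the standard families of seminorms defining $s$ (equivalent to the $\ell^2$-type or sup-type ones). A natural candidate for an isomorphism $\Psi\colon A\otimes_\pi s\to F$ is the continuous linear extension of the bilinear map $(a,\{a_n\})\mapsto\{a_n a\}_{n\in\NN}$: on elementary tensors $a\otimes e_n$ it sends them to the sequence supported at $n$ with value $a$, and these are dense in $F$ (finitely supported sequences are dense in $F$ since each $\|\cdot\|_{\rho,k}$ is an absolutely convergent sum). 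The inverse should send a sequence $x=\{x_n\}$ to $\sum_n x_n\otimes e_n$, which I must check converges in $A\otimes_\pi s$.

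The key technical point is that the projective tensor seminorms on $A\otimes_\pi s$ are exactly (up to equivalence) the seminorms $p\otimes\|\cdot\|_k$, and that one can compute these on the relevant elements. Concretely, I would use the description of a basis of continuous seminorms on $A\otimes_\pi s$: each is dominated by $(p\otimes q)(u)=\inf\sum_i p(b_i)q(c_i)$ over representations $u=\sum_i b_i\otimes c_i$, where $p$ runs over continuous seminorms on $A$ and $q$ over the $\|\cdot\|_k$. For $u=\sum_{n}x_n\otimes e_n$ (a finite sum) one has $(p\otimes\|\cdot\|_k)(u)\le\sum_n p(x_n)\|e_n\|_k=\sum_n|1+n|^k p(x_n)=\|x\|_{p,k}$, which shows $\Psi^{-1}$ is continuous on finitely supported sequences and extends to $F$; conversely, for the sequence $\Psi(u)$ associated to a general $u=\sum_i b_i\otimes c_i$ one estimates $\|\Psi(u)\|_{p,k}=\sum_n|1+n|^k p\bigl(\sum_i (c_i)_n b_i\bigr)\le\sum_i p(b_i)\sum_n|1+n|^k|(c_i)_n|=\sum_i p(b_i)\|c_i\|_k$, and taking the infimum over representations gives $\|\Psi(u)\|_{p,k}\le(p\otimes\|\cdot\|_k)(u)$, so $\Psi$ is continuous. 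Since $\Psi$ and $\Psi^{-1}$ are mutually inverse on the dense subspaces of finitely supported sequences / algebraic tensors and both are continuous, they extend to mutually inverse topological isomorphisms between the completions $A\otimes_\pi s$ and $F$ (one should note $F$ is complete, being a projective limit of weighted $\ell^1$-spaces, or simply check Cauchy sequences converge coordinatewise with control).

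The main obstacle I expect is the completeness and density bookkeeping: one must be careful that the family $\{\|\cdot\|_{\rho,k}\}$ really does give a complete space $F$ and that finitely supported sequences are dense in it (this needs that for $x\in F$ the tails $\sum_{n\ge N}|1+n|^k\rho(x_n)\to0$, which is exactly the finiteness of $\|x\|_{\rho,k}$), together with the verification that the projective tensor topology on $A\otimes_\pi s$ is generated by the seminorms $p\otimes\|\cdot\|_k$ rather than something strictly finer — this is where one invokes that $s$ (equivalently $A$, when it has the fine topology, though here $A$ is arbitrary) is nuclear, or more elementarily that for the specific defining families the cross seminorms already generate the projective topology. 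Once these two points are in place, the isomorphism is the obvious one and the estimates above close the argument. I would also remark that this lemma specializes, taking $A=\CC$, to the statement that $s\otimes_\pi s\cong s$ (consistent with Lemma~\ref{spaces_isomorphic_to_s}), which serves as a sanity check.
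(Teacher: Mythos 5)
Your argument is correct and essentially identical to the paper's: both identify $A\otimes_{\pi}s$ with $F$ via the map $a\otimes\{a_n\}\mapsto\{a_na\}$, prove the two inequalities showing the projective cross seminorm $\rho\otimes p_k$ agrees with $\|\cdot\|_{\rho,k}$ on the image of the algebraic tensor product, and conclude by density of finitely supported sequences together with completeness of $F$. Your worry about the cross seminorms generating the projective topology is unnecessary (this is the standard description of $\otimes_{\pi}$, e.g.\ in Tr\`eves, and needs no nuclearity), and your explicit attention to the completeness of $F$ is, if anything, a point the paper's proof asserts without verification.
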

\begin{proof}
There is an inclusion $\phi:A\otimes s\to F$ defined by $a\otimes \alpha\in A\otimes s\mapsto \{x_n=\alpha_na\}\in F$. Let
$z=\sum_{t=1}^{N}a^{(t)}\otimes \alpha^{(t)}$ be an element of $A\otimes s$. We have
\begin{eqnarray*}
||\phi(z)||_{\rho, k} &=&  \sum_{n\in\NN} \rho\left(\sum_{t=1}^{N}a^{(t)}\alpha_n^{(t)}\right)|1+n|^{k}\\
&\leq& \sum_{n\in\NN} \sum_{t=1}^{N} \rho(a^{(t)})|\alpha_n^{(t)}||1+n|^{k}\\
&=& \sum_{t=1}^{N}\rho(a^{(t)})p_k(\alpha^{(t)}).
\end{eqnarray*}
This implies $||\phi(z)||_{\rho, k}\leq (\rho\otimes p_k)(z)$. We can write $z=\sum_{n\in\NN}\sum_{t=1}^{N}a^{(t)}\alpha_n^{(t)}\otimes e_n$ and therefore
\begin{eqnarray*}
(\rho\otimes p_{k})(z) &\leq& \sum_{n\in\NN} (\rho\otimes p_k)\left(\sum_{t=1}^{N}a^{(t)}\alpha_n^{(t)}\otimes e_n\right)\\
&=& \sum_{n\in\NN} \rho\left(\sum_{t=1}^{N}a^{(t)}\alpha_n^{(t)}\right)|1+n|^{k}\\
&=& ||\phi(z)||_{\rho, k}.
\end{eqnarray*}
This implies that $||\cdot||_{\rho, k}=\rho\otimes p_k$ in the image of $A\otimes s$.
Since all finite sequences in $A$ are in $A\otimes s$, $A\otimes s$ is dense in $F$. Since $F$ is a complete space, we conclude $A\otimes_{\pi}s=F$.
\end{proof}

\begin{lem}\label{algtensor}
Let $s$ be the locally convex space of rapidly decreasing sequences and $A$ an algebra with a countable basis over $\CC$ equipped with the fine topology. Then
 we have
$$A\otimes_{\pi}s=A\otimes s$$
as locally convex spaces. This implies that
$$A\otimes_{\pi} \mcT= A\otimes \mcT \quad \text{and} \quad A\otimes_{\pi} (\mcT \otimes_{\pi} \mcT)= A\otimes (\mcT \otimes_{\pi} \mcT)$$
as locally convex algebras.
\end{lem}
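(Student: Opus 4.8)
The plan is to deduce the statement from Lemma~\ref{lca_tensor_s}. First I would recall that by the remarks preceding this lemma, an algebra $A$ with a countable basis over $\CC$, equipped with the fine topology (all seminorms), is a locally convex algebra, and that the space $\mcK$ of smooth compact operators is isomorphic as a locally convex space to $s$ (Lemma~\ref{spaces_isomorphic_to_s}), while $C^{\infty}(S^1)$ is isomorphic to $s$ via Fourier series. The key observation is that for $A$ with the fine topology, every sequence $\{x_n\}\subseteq A$ of the type appearing in Lemma~\ref{lca_tensor_s} (i.e.\ with $\|x\|_{\rho,k}<\infty$ for all continuous seminorms $\rho$ and all $k$) is automatically \emph{finitely supported}: fix a basis $\{b_\alpha\}$ of $A$; since the topology is generated by all seminorms, there is a seminorm $\rho$ with $\rho(b_\alpha)=1$ for every basis vector occurring among the $x_n$ (and more generally any choice of values), and finiteness of $\|x\|_{\rho,0}=\sum_n\rho(x_n)$ for every such $\rho$ forces all but finitely many $x_n$ to vanish. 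Hence the space $F$ of Lemma~\ref{lca_tensor_s} coincides with $A\otimes s$ (finite sums), giving $A\otimes_\pi s=A\otimes s$ as locally convex spaces.

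Next I would transfer this from $s$ to $\mcT$. Since $\mcT=\mcK\oplus C^{\infty}(S^1)$ as locally convex vector spaces and both summands are isomorphic to $s$, we have $\mcT\cong s\oplus s\cong s$ as locally convex vector spaces by Lemma~\ref{spaces_isomorphic_to_s}. Projective tensor product commutes with finite direct sums, so $A\otimes_\pi\mcT\cong(A\otimes_\pi\mcK)\oplus(A\otimes_\pi C^{\infty}(S^1))\cong(A\otimes\mcK)\oplus(A\otimes C^{\infty}(S^1))=A\otimes\mcT$; alternatively one applies the displayed identity $A\otimes_\pi s=A\otimes s$ directly through the isomorphism $\mcT\cong s$. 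The same argument with $\mcT$ replaced by $\mcT\otimes_\pi\mcT$ works because, again by Lemma~\ref{spaces_isomorphic_to_s}, $s\otimes_\pi s\cong s$, hence $\mcT\otimes_\pi\mcT$ is also isomorphic to $s$ as a locally convex vector space, so $A\otimes_\pi(\mcT\otimes_\pi\mcT)=A\otimes(\mcT\otimes_\pi\mcT)$ by the same reasoning. Finally, since $\otimes_\pi$ respects the multiplicative structure and these identifications are the identity on the underlying algebraic tensor products, these are equalities of locally convex algebras, not merely of vector spaces.

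The main obstacle, and the only genuinely non-routine point, is the claim that membership in $F$ forces finite support when $A$ carries the fine topology. One must be careful that the continuous seminorms on $A$ are exactly \emph{all} seminorms (Proposition 2.1 in~\cite{MR2240217}), so that for any prescribed function $\alpha\mapsto c_\alpha\ge 0$ on the basis there is a continuous seminorm taking those values on basis vectors; then choosing $c_\alpha$ growing fast enough along any infinite set of basis vectors that would occur in an infinitely-supported $x$ contradicts $\|x\|_{\rho,0}<\infty$. Once this is in hand, everything else is bookkeeping with Lemma~\ref{lca_tensor_s} and the vector-space isomorphisms of Lemma~\ref{spaces_isomorphic_to_s}.
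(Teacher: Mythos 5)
Your reduction of the $\mcT$ and $\mcT\otimes_\pi\mcT$ statements to the single identity $A\otimes_\pi s=A\otimes s$ via Lemma~\ref{spaces_isomorphic_to_s} is exactly right and matches the paper. The problem is your key step. You claim that any sequence $\{x_n\}$ in the space $F$ of Lemma~\ref{lca_tensor_s} is finitely supported. This is false: take any nonzero $a\in A$ and any $\sigma=(\sigma_n)\in s$ with infinitely many nonzero entries, and set $x_n=\sigma_n a$. Then $\|x\|_{\rho,k}=\rho(a)\sum_n(1+n)^k|\sigma_n|<\infty$ for every seminorm $\rho$ and every $k$, so $x\in F$, yet $x$ is not finitely supported. (Indeed, if your claim were true, $F$ would consist of the finitely supported sequences, contradicting the inclusion $\phi:A\otimes s\to F$ of Lemma~\ref{lca_tensor_s}; already for $A=\CC$ one has $F=s$, not $c_{00}$.) The flaw in the argument is that a seminorm with $\rho(b_\alpha)=1$ on all basis vectors does not bound $\rho(x_n)$ away from $0$ for nonzero $x_n$: the decay that makes $\sum_n\rho(x_n)$ converge can sit entirely in the scalar coefficients rather than in which basis vectors occur, and no choice of weights on the basis can detect that.

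What is actually true, and what the paper proves, is that $\mathrm{span}\{x_n\}_{n\in\NN}$ is finite-dimensional. Your idea of choosing weights that grow fast along basis vectors is the right tool for this weaker claim, but it must be run more carefully: assuming the span is infinite-dimensional, one extracts indices $n_1<n_2<\cdots$ and basis vectors $v_{m_1},v_{m_2},\ldots$ with $m_k$ strictly increasing such that the coefficient $\lambda_{n_k}^{(m_k)}$ of $v_{m_k}$ in $x_{n_k}$ is nonzero, and then the $\ell^1$-type seminorm weighting $v_{m_k}$ by $|\lambda_{n_k}^{(m_k)}|^{-1}$ satisfies $\rho(x_{n_k})\geq 1$ for all $k$, contradicting $\|x\|_{\rho,0}<\infty$. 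Once the span lies in $\mathrm{span}\{v_1,\dots,v_N\}$, you still need a second step that your proposal omits: applying the coordinate seminorms $p_i\bigl(\sum_j c_jv_j\bigr)=|c_i|$ shows each coordinate sequence $(\lambda_n^{(i)})_n$ is rapidly decreasing, so that $x=\sum_{i=1}^N v_i\otimes(\lambda_n^{(i)})_n$ lies in the algebraic tensor product $A\otimes s$. The point you are missing is that elements of $A\otimes s$ correspond to sequences with finite-dimensional span and rapidly decreasing coordinates, not to finitely supported sequences.
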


\begin{proof}
We prove that the space $F$ from Lemma \ref{lca_tensor_s} is equal to the algebraic tensor product $A\otimes s$. Let $\{v_n\}_{n\in \NN}$ be a countable
basis of $A$. Given $\{x_n\}_{n\in\NN}$ a sequence of elements in $A$ with $\rho_k(x)$ finite for all $k\in\NN$ we have, for $n$ fixed
$$x_n=\sum_{i\in \NN}\lambda_n^{(i)}v_i$$
where $\lambda_{n}^{(i)}\neq 0$ for finitely many $i\in \NN$.

First, we prove that span$\{x_n\}_{n\in \NN}$ is finite dimensional. Suppose this is not the case. We construct subsequences $\{x_{n_i}\}$ and $\{v_{m_i}\}$
such that $\lambda_{n_i}^{(m_i)}\neq 0$. Choose $n_1$ such that $x_{n_1}\neq 0$ and $m_1$ such that $\lambda_{n_1}^{(m_1)}\neq 0$. Suppose
$\{x_{n_1}, \dots, x_{n_k}\}$ and $\{v_{m_1}, \dots, v_{m_k}\}$ have been chosen. span$\{x_{i}\}_{i>n_k}$ is infinite dimensional and therefore
it is not contained in span$\{v_i\}_{1\leq i\leq m_k}$. Choose $n_{k+1}>n_k$ such that $x_{n_{k+1}}\notin\text{span} \{v_i\}_{1\leq i\leq m_k}$.
We can choose $m_{k+1}>m_k$ such that $\lambda_{n_{k+1}}^{(m_{k+1})}\neq 0$.

Now we define a seminorm in $A$
$$\rho\left(\sum_{i\in\NN}c_iv_i\right)=\sum_{i\in \NN}|c_i|\alpha_i$$
with $\alpha_i=0$ for $i\notin \{n_k\}_{k\in\NN}$ and $\alpha_{n_k}\geq |\lambda_{n_k}^{(m_k)}|^{-1}$. Thus we have $\rho(x_{n_k})\geq 1$ and
$$\rho_0(x)=\sum_{i\in\NN}\rho(x_i)\geq \sum_{i\in\NN}\rho(x_{n_i})$$
diverges. We conclude that  span$\{x_n\}_{n\in \NN}$ is finite dimensional.

Let $N\in\NN$ be such that span$\{x_n\}_{n\in \NN}\subseteq$ span$\{v_1, \dots, v_N\}$. That is
$$x_n=\sum_{i=0}^{N}\lambda_n^{(i)}v_i$$
Then
\begin{eqnarray*}
x &=&  \lim_{M\to \infty}\sum_{n=0}^{M} x_n\otimes e_n\\
&=& \lim_{M\to \infty}\sum_{n=0}^{M} \sum_{i=0}^{N}\lambda_n^{(i)}v_i\otimes e_n\\
&=& \lim_{M\to \infty}\sum_{i=0}^{N}v_i\otimes \sum_{n=0}^{M} \lambda_n^{(i)} e_n
\end{eqnarray*}
Consider the seminorm $p_j(\sum c_iv_i)=|c_{j}|$. Then, since $x\in A\otimes_{\pi}s$,
$$\sum_{n\in\NN}|1+n|^{k}p_i(x_n)=\sum_{n\in\NN}|1+n|^{k}|\lambda_{n}^{(i)}|<\infty$$
for all $k\in\NN$. Thus, for a fixed $i$, the sequences $\{\lambda_{n}^{(i)}\}$ are rapidly decreasing on $n$. Therefore
$\sum_{n\in \NN}\lambda_i^{(n)}e_n\in s$ and consequently $x=\sum_{i=1}^{N}v_i\otimes s_i\in A\otimes s$.

The equalities $A\otimes_{\pi} \mcT= A\otimes \mcT$ and $A\otimes_{\pi} (\mcT \otimes_{\pi} \mcT)= A\otimes (\mcT \otimes_{\pi} \mcT)$ follow because, by Lemma \ref{spaces_isomorphic_to_s}, we have $\mcT\cong s$ and $\mcT\otimes_{\pi} \mcT\cong s$ as
locally convex vector spaces.
\end{proof}


\section{Bivariant K-theory of locally convex algebras}
\label{definition of kk}


\subsection{Definition and properties of $kk^{alg}$}\

The bivariant $K$-theory $kk^{alg}$ is constructed by Cuntz in \cite{MR2240217}. In this section we give the definition of $kk^{alg}$ and state its main
properties. A complete treatise of these constructions in the context of bornological algebras can be found in \cite{MR2340673}. The proofs translate to the
context of locally convex algebras in a straightforward manner.

There is a canonical map
$$\langle J^kA, \mcK\otimes_{\pi}S^kB \rangle \to \langle J^{k+1}A, \mcK\otimes_{\pi}S^{k+1}B \rangle$$
that assigns to each morphism $\alpha$ the classifying map associated with the extension
$$
0\to \mcK\otimes_{\pi} S^{k+1}B\to \mcK\otimes_{\pi}CS^{k}B\to \mcK\otimes_{\pi}S^kB\to 0.
$$

\begin{defn}
Let $A$ and $B$ be locally convex algebras. We define
$$kk^{alg}(A, B)=\varinjlim_{k\in \NN} \langle J^{k}A, \mcK\otimes_{\pi} S^kB  \rangle$$
and for $n\in \ZZ$
$$kk_n^{alg}(A, B)=\varinjlim_{\substack{k\in \NN \\ k+n\geq 0}} \langle J^{k+n}A, \mcK\otimes_{\pi} S^kB  \rangle.$$
\end{defn}

The group structure of $kk_{n}^{alg}(A, B)$ is defined using Lemma \ref{group_structure}.

\begin{lem}
There is an associative product
$$kk_n^{alg}(A,B)\times kk_m^{alg}(B,C) \to kk_{n+m}^{alg}(A,C)$$
\end{lem}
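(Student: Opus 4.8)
The plan is to construct the product by composing classifying maps, following the standard approach in Cuntz's work and its exposition in~\cite{MR2340673}. Recall that an element of $kk_n^{alg}(A,B)$ is represented, for some sufficiently large $k$, by a diffotopy class $\langle \alpha \rangle$ of a continuous homomorphism $\alpha: J^{k+n}A \to \mcK\otimes_\pi S^k B$, and similarly an element of $kk_m^{alg}(B,C)$ is represented by $\langle \beta \rangle$ with $\beta: J^{l+m}B \to \mcK\otimes_\pi S^l C$. The first step is to stabilize both representatives so their indices are compatible: after increasing $k$ (using the canonical map defined just before the definition of $kk^{alg}$, which is compatible with the direct limit), we may assume the target of $\alpha$ involves $S^{l+m}B$ up to tensoring with $\mcK$ and suspensions, so that $\beta$ can be applied after $\alpha$.

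The key construction is the following. Given $\alpha: J^{k}A \to \mcK\otimes_\pi S^{k-n} B$ (relabelling indices), apply the functor $J^{l+m}(\cdot)$ together with the compatibility of $J$ with $\mcK\otimes_\pi(-)$ and $S(-)$ up to diffotopy: there is a natural map $J^{j}(\mcK\otimes_\pi S^i B) \to \mcK\otimes_\pi S^i J^j B$, well defined up to diffotopy, coming from the universal property of the tensor algebra and the linear splitting of the defining extensions. Composing, one obtains from $\alpha$ a morphism $J^{k+l+m}A \to \mcK\otimes_\pi S^{k-n} J^{l+m} B$, and then post-composing with $\mathrm{id}_{\mcK}\otimes_\pi S^{k-n}(\beta)$ (after identifying $\mcK\otimes_\pi\mcK\cong\mcK$ via Lemma~\ref{spaces_isomorphic_to_s}) yields a morphism $J^{k+l+m}A \to \mcK\otimes_\pi S^{k-n+l}C$, whose diffotopy class represents an element of $kk_{n+m}^{alg}(A,C)$. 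One then checks this is independent of the choices (the stabilization level, the representatives within their diffotopy classes, and the diffotopy ambiguity in the natural maps $J^j(\mcK\otimes_\pi S^i B)\to \mcK\otimes_\pi S^i J^j B$), using that all the ingredients are natural up to diffotopy and that diffotopic maps induce equal classes in the direct limit.

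Associativity then follows from the naturality of all the constructions involved: given $\langle\alpha\rangle \in kk(A,B)$, $\langle\beta\rangle\in kk(B,C)$, $\langle\gamma\rangle\in kk(C,D)$, both $(\langle\alpha\rangle\cdot\langle\beta\rangle)\cdot\langle\gamma\rangle$ and $\langle\alpha\rangle\cdot(\langle\beta\rangle\cdot\langle\gamma\rangle)$ unwind, after sufficient stabilization, to the same composite of the form $J^N A \to \mcK\otimes_\pi S^? J^? J^? C \to \cdots \to \mcK\otimes_\pi S^? D$, once one invokes the compatibility (up to diffotopy) of the natural transformations $J^j(\mcK\otimes_\pi S^i(-)) \Rightarrow \mcK\otimes_\pi S^i J^j(-)$ with composition. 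This is exactly the content of the corresponding statement in~\cite{MR2340673}, so I would cite Theorem 6.15 (and its surrounding lemmas) there and remark that the proofs carry over verbatim from the bornological to the locally convex setting, as noted at the start of this section.

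The main obstacle is bookkeeping rather than conceptual: one must carefully track the indices and the diffotopy ambiguities through the iterated functors $J^{j}$, $\mcK\otimes_\pi(-)$ and $S^{i}$, and verify that the natural maps relating $J^{j}(\mcK\otimes_\pi S^{i} B)$ and $\mcK\otimes_\pi S^{i} J^{j} B$ are coherent up to diffotopy. Since this is entirely parallel to the bornological case treated in~\cite{MR2340673}, I would present it as a citation with a short indication of the construction rather than reprove it in full.
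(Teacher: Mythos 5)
Your proposal is correct and takes essentially the same route as the paper: the paper's proof is simply a citation of Lemma 6.32 in \cite{MR2340673}, and you likewise defer to that reference after accurately sketching the underlying construction (composition of classifying maps via the natural transformations $J^{j}(\mcK\otimes_\pi S^{i}(-))\Rightarrow \mcK\otimes_\pi S^{i}J^{j}(-)$ and the identification $\mcK\otimes_\pi\mcK\cong\mcK$). Your added outline of the mechanism is a faithful description of what happens in that source, so there is nothing to correct.
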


\begin{proof}
Follows from Lemma 6.32 in \cite{MR2340673}.
\end{proof}

In view of this associative product we can regard locally convex algebras as objects of a category $\mathfrak{KK}^{alg}$ with morphisms between $A$ and
$B$ given by elements of $kk_{*}^{alg}(A, B)$.
Any morphism $\phi:A\to B$ of locally convex algebras induces an element $kk(\phi)\in kk^{alg}(A, B)$ which is associated with the diffotopy class of
$i\circ \phi:A\to B\to \mcK\otimes_{\pi} B$, where $i$ is the inclusion of $B$ into the first corner, i.e. $i(b)= e_{00}\otimes b$. We have
$kk(\phi)kk(\psi)=kk(\psi\circ \phi)$ (see Theorem 2.3.1 in \cite{MR2207702}) and therefore we have a functor
$$kk_{*}^{alg}:\mathfrak{lca}\to \mathfrak{KK}^{alg}.$$
In particular the identity of $A$ induces an element $kk(id_A)$ in $kk^{alg}(A, A)$ which is denoted by $1_A$.

\begin{defn}
A functor $F$ from the category of locally convex algebras to an abelian category $\mcC$ is called
\begin{enumerate}
\item diffotopy invariant if $F(f)=F(g)$ whenever $f$ and $g$ are diffotopic,
\item half exact for linearly split extensions if
$$F(A)\to F(B)\to F(C)$$
is exact whenever
$$0\to A\to B\to C\to 0$$
is a linearly split extension,
\item $\mcK$-stable if the natural inclusion $i:A\to \mcK\otimes_{\pi} A $, sending $a$ to $e_{00}\otimes a$ induces an isomorphism
$F(i):F(A)\to F(\mcK\otimes_{\pi}  A)$.
\end{enumerate}
\end{defn}

The functor $kk_{*}^{alg}:\mathfrak{lca}\to \mathfrak{KK}^{alg}$ is diffotopy invariant, half exact for linearly split extensions and is $\mcK$-stable.


\subsection{Bott Periodicity and Triangulated structure of $\mathfrak{KK}^{alg}$}\

The suspension of locally convex algebras determines a functor $S:\mathfrak{KK}^{alg}\to \mathfrak{KK}^{alg}$ with $S(A)=SA$.

\begin{thm}\label{bott_periodicity}[Bott periodicity]
There is a natural equivalence between $S^{2}$ and the identity functor, hence $\mathfrak{KK}^{alg}_{2n}(A,B)\cong \mathfrak{KK}^{alg}_{0}(A,B)$ and
$\mathfrak{KK}^{alg}_{2n+1}(A,B)\cong \mathfrak{KK}^{alg}_{1}(A,B)$.
\end{thm}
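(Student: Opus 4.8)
The plan is to follow Cuntz's original argument and deduce Bott periodicity from the structure of the smooth Toeplitz extension together with the Toeplitz diffotopy of Lemma~\ref{ToepDiffo}. The whole statement reduces to producing, naturally in the locally convex algebra $A$, an isomorphism $A\xrightarrow{\ \sim\ }S^{2}A$ in $\mathfrak{KK}^{alg}$: once such a natural equivalence is available, composing with it in the second variable and invoking the associative product gives $\mathfrak{KK}^{alg}_{2n}(A,B)\cong\mathfrak{KK}^{alg}_{0}(A,B)$ and $\mathfrak{KK}^{alg}_{2n+1}(A,B)\cong\mathfrak{KK}^{alg}_{1}(A,B)$ by iteration.

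First I would extract a ``Bott-type'' class from the Toeplitz extension $0\to\mcK\to\mcT\to C^{\infty}(S^{1})\to0$. Evaluation at the basepoint $1\in S^{1}$ is split by the constant functions, so in $\mathfrak{KK}^{alg}$ one has $C^{\infty}(S^{1})\cong\CC\oplus S\CC$ (the ideal of functions vanishing at $1$ being identified, up to diffotopy, with the suspension $S\CC$ by a reparameterization argument); pulling back along $S\CC\hookrightarrow C^{\infty}(S^{1})$ yields a linearly split extension with ideal $\mcK$ and quotient $S\CC$. Its classifying map $\gamma\colon J(S\CC)\to\mcK$, composed with the $\mcK$-stability isomorphism $kk^{alg}(\CC,-)\cong kk^{alg}(\mcK,-)$ and tensored with an arbitrary $B$, is the candidate for the Bott morphism, i.e.\ a natural transformation relating $S^{2}(\,\cdot\,)$ and the identity. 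In the opposite direction the cone extension $0\to SA\to CA\to A\to0$ supplies, via half-exactness, the boundary transformation that will serve as the inverse; for this one uses that the locally convex spaces $\mcK$, $s\otimes_{\pi}s$ and $s\oplus s$ are all isomorphic to $s$ (Lemma~\ref{spaces_isomorphic_to_s}) so that the composites of corner embeddings and stability isomorphisms are well controlled, and that the relevant group structures are abelian (Lemma~\ref{group_structure}).

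The heart of the matter is showing that these two transformations are mutually inverse, and this is exactly where the Toeplitz diffotopy $\phi_{t}\colon\mcT\to\mcT\otimes_{\pi}\mcT$ of Lemma~\ref{ToepDiffo} is used. One forms the two iterated Toeplitz extensions attached to $\mcT\otimes_{\pi}\mcT$ — one built in the ``first leg'', one in the ``second'' — and their classifying maps are, respectively, (diffotopic to) the two obvious products of the one-step classifying map with an identity. Since $\phi_{0}\simeq\phi_{1}$, these two products agree up to diffotopy; unwinding this identity through the functoriality of $J$ and the $\mcK$-stability isomorphisms shows that the composite of the Bott transformation with the boundary map is diffotopic to the identity on $S^{2}\CC$, and the other composite to the identity on $\CC$. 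Hence the Bott transformation is an isomorphism in $\mathfrak{KK}^{alg}$, giving the natural equivalence $S^{2}\simeq\mathrm{id}$.

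The main obstacle is precisely this last verification. The diffotopy $\phi_{t}$ does not end at the identity but at $\phi_{0}(S)=S\otimes1$ and $\phi_{1}(S)=S^{2}S^{*}\otimes1+e\otimes S$, so one must carefully absorb the defect $SS^{*}=1-e_{00}$ into the ideal $\mcK$, keep track of which corner $e_{00}\otimes(\,\cdot\,)$ each factor lands in, and propagate all of this through two levels of classifying maps and two applications of $\mcK$-stability, while retaining naturality in $A$ so that one obtains an equivalence of functors rather than a separate isomorphism for each $A$. This bookkeeping is the only genuinely delicate point; the argument is carried out in detail in \cite{MR1456322} and, in the bornological setting which transfers verbatim, in \cite{MR2340673}, so here we content ourselves with this outline and refer the reader there.
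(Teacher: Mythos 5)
Your outline is correct and, in substance, matches the paper: the paper's own ``proof'' of Theorem~\ref{bott_periodicity} is simply a citation of Corollary~7.25 in \cite{MR2340673}, and your sketch reproduces the standard Toeplitz-extension argument (reduced Toeplitz extension plus the diffotopy of Lemma~\ref{ToepDiffo}) contained in that reference and in \cite{MR1456322}, to which you likewise defer the delicate verification. No further comparison is needed.
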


\begin{proof}
See Corollary 7.25 in \cite{MR2340673} and the discussion that follows.
\end{proof}

By Theorem \ref{bott_periodicity}, $S$ is an automorphism and $S^{-1}\cong S$. We recall the triangulated structure of $(\mathfrak{KK}, S)$.

Let $f:A\to B$ be a morphism in $\mathfrak{lca}$. The mapping cone of $f$ is defined as the locally convex algebra
$$C(f)=\{(x, g)\in A\oplus CB| f(x)=g(0)\}.$$
The triangle
$$
\xymatrix{
SB\ar[r]^{kk(\iota)} & C(f)\ar[r]^{kk(\pi)} & A\ar[r]^{kk(f)} & B
}
$$
in $(\mathfrak{KK}^{alg}, S)$, where $\pi:C(f)\to A$ is the projection into the first component and $\iota:SB\to C(f)$ is the inclusion into the first
component, is called a mapping cone triangle.

Let $E: 0\to A \overset{f}\to  B\overset{g}\to C \to 0$
 be a linearly split extension in $\mathfrak{lca}$. This induces an element $kk(E)\in kk_{1}^{alg}(C, A)$ that corresponds to the classifying map $JC\to A$
 of the extension and hence an element $kk(E)\in kk^{alg}(SC, A)$. The triangle
$$
\xymatrix{
SC\ar[r]^{kk(E)} & A\ar[r]^{kk(f)} & B\ar[r]^{kk(g)} & C
}
$$
 in $(\mathfrak{KK}^{alg}, S)$ is called an extension triangle.

\begin{prop}\label{triangulated}
The category $\mathfrak{KK}^{alg}$ with suspension automorphism $S:\mathfrak{KK}^{alg}\to \mathfrak{KK}^{alg}$ and with triangles isomorphic to mapping cone
triangles as exact triangles is a triangulated category. Furthermore, extension triangles are exact.
\end{prop}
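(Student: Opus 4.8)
The plan is to verify the axioms of a triangulated category for the class of triangles in $(\mathfrak{KK}^{alg},S)$ isomorphic to mapping cone triangles, following the general treatment of stable homotopy categories in~\cite{MR2340673} and~\cite{MR2207702}. Throughout we use that $S$ is an automorphism of $\mathfrak{KK}^{alg}$ (Theorem~\ref{bott_periodicity}) and that $kk_*^{alg}$ is diffotopy invariant, half exact for linearly split extensions and $\mcK$-stable. Closure of the exact triangles under isomorphism holds by definition, and every homomorphism $f:A\to B$ in $\mathfrak{lca}$ sits in the mapping cone triangle built from $C(f)$; for an arbitrary morphism of $\mathfrak{KK}^{alg}$ one first represents it, after suspending and tensoring with $\mcK$, by an honest homomorphism, using diffotopy invariance and $\mcK$-stability. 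The triangle $A\overset{1}{\to} A\to 0\to SA$ is exact because $C(\mathrm{id}_A)$ deformation retracts onto the cone $CA$ and hence is contractible, so it is isomorphic to $0$ in $\mathfrak{KK}^{alg}$; this completes~(TR1).

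For the rotation axiom~(TR2), the key point is that for a homomorphism $f:A\to B$ the iterated mapping cone $C(\pi)$ of the projection $\pi:C(f)\to A$ is $kk^{alg}$-equivalent to $SB$, which follows from half exactness applied to the linearly split mapping cone extensions $0\to SB\to C(f)\overset{\pi}{\to} A\to 0$ and $0\to SA\to C(\pi)\to C(f)\to 0$; consequently the once-rotated mapping cone triangle of $f$ is, up to a sign and this equivalence, again a mapping cone triangle, and since $S^{-1}\cong S$ the same argument rotates backwards. For the fill-in axiom~(TR3), given mapping cone triangles of $f:A\to B$ and $f':A'\to B'$ together with $a:A\to A'$, $b:B\to B'$ satisfying $bf\simeq f'a$, one lifts the diffotopy to replace $b$ within its diffotopy class by a homomorphism strictly intertwining $f$ and $f'$, and then defines the third map on mapping cones by $(x,g)\mapsto(b(x),a\circ g)$; for arbitrary $kk^{alg}$-morphisms one reduces to homomorphisms as above.

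The octahedral axiom~(TR4) is the most delicate point: for $f:A\to B$ and $g:B\to C$ one assembles the mapping cones $C(f)$, $C(g)$ and $C(g\circ f)$ with the evident homomorphisms between them and identifies the required connecting triangle by a $3\times 3$-diagram argument, applying half exactness to the natural linearly split extensions relating these cones, exactly as in~\cite{MR2340673}. To see that extension triangles are exact, let $E:0\to A\overset{f}{\to} B\overset{g}{\to} C\to 0$ be linearly split and compare it with the mapping cone triangle of $g$: the homomorphism $A\to C(g)$, $a\mapsto(f(a),0)$, is a $kk^{alg}$-equivalence by half exactness (this is excision for $kk^{alg}$), and under it the mapping cone triangle $SC\to C(g)\to B\to C$ becomes the extension triangle $SC\overset{kk(E)}{\to} A\overset{kk(f)}{\to} B\overset{kk(g)}{\to} C$. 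I expect the main obstacle to be the repeated, bookkeeping-heavy passage from diagrams of $kk^{alg}$-morphisms to strictly commuting diagrams of genuine homomorphisms of locally convex algebras (after suspension and $\mcK$-stabilization), together with tracking signs and suspensions through the rotations; once this machinery is in place, every axiom reduces to half exactness and diffotopy invariance, and the bornological proofs of~\cite{MR2340673} transfer essentially verbatim.
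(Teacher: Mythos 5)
Your outline is correct and follows exactly the route of the paper, whose proof consists of citing Propositions 7.22 and 7.23 of Cuntz--Meyer--Rosenberg; the axioms TR1--TR4 and the exactness of extension triangles are verified there precisely by the reductions you describe (representing $kk^{alg}$-morphisms by honest homomorphisms after applying $J^k$, $\mcK$-stabilizing and suspending, identifying $C(\pi)$ with $SB$ via half exactness, and comparing an extension with the mapping cone of its quotient map). No substantive difference from the paper's argument.
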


\begin{proof}
See Propositions 7.22 and  7.23 in \cite{MR2340673}.
\end{proof}


\subsection{Stabilization by Schatten ideals}\label{kk-schatten-ideals}\

In \cite{MR2207702}, Cuntz and Thom define a related bivariant $K$-theory in the category $\mathfrak{lca}$. We recall the definition for the case of the
Schatten ideals. Let $\HH$ denote an infinite dimensional separable Hilbert Space.

\begin{defn}
The Schatten ideals $\mathcal{L}_p\subseteq B(\HH)$, for $p\geq 1$, are defined by
$$\mathcal{L}_p=\{x\in B(\HH)| \ Tr |x|^p<\infty \}.$$
Equivalently, $\mathcal{L}_p$ consists of the space of bounded operators such that the sequence of its singular values $\{\mu_n\}$ is in $l^{p}(\NN)$.
\end{defn}

\begin{defn}\label{def kk Lp}
Let $A$ and $B$ be locally convex algebras and $p\geq 1$. We define
$$kk^{\mathcal{L}_p}_n(A, B)=kk^{alg}(A, B\otimes_{\pi}\mathcal{L}_p).$$
\end{defn}

The groups $kk^{\mathcal{L}_p}(A, B)$, for all $p\geq 1$, are isomorphic (Corollary 2.3.5 of \cite{MR2207702}). Moreover, when $p>1$,
this bivariant $K$-theory is related to algebraic $K$-theory in the following manner:

\begin{thm}[Theorem 6.2.1 in \cite{MR2207702}]
For every locally convex algebra $A$ and $p>1$ we have
$$kk^{\mathcal{L}_p}_0(\CC, A)=K_0(A\otimes_{\pi}\mathcal{L}_p).$$
\end{thm}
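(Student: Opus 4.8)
This is Theorem 6.2.1 of \cite{MR2207702}; here is how I would organize its proof. Writing $D:=A\otimes_{\pi}\mathcal{L}_p$, the definition of $kk^{\mathcal{L}_p}$ gives $kk^{\mathcal{L}_p}_0(\CC, A)=kk^{alg}(\CC, D)$, so the task is to produce a natural isomorphism $K_0(D)\cong kk^{alg}(\CC, D)$. First I would build the comparison homomorphism $\eta_D\colon K_0(D)\to kk^{alg}(\CC, D)$. Using the nonunital description $K_0(D)=\ker\bigl(K_0(D^{\sim})\to\ZZ\bigr)$, a class is represented by an idempotent $e\in M_n(D^{\sim})$ that reduces, under $D^{\sim}\to\CC$, to a trivial idempotent $p_0\in M_n(\CC)$; the two homomorphisms $\CC\to M_n(D^{\sim})$, $1\mapsto e$ and $1\mapsto p_0$, agree modulo the ideal $M_n(D)$ and hence form a quasihomomorphism $\CC\rightrightarrows M_n(D^{\sim})\rhd M_n(D)$, which by Cuntz's quasihomomorphism description of $kk^{alg}$ (see~\cite{MR2240217}) determines an element of $kk^{alg}(\CC, M_n(D))\cong kk^{alg}(\CC, D)$, the last isomorphism by $\mcK$-stability. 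Naturality in $A$ is immediate.

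The next ingredient is that the functor $G(A):=K_0(A\otimes_{\pi}\mathcal{L}_p)$ is split exact (additivity of $K_0$ on split extensions), $\mcK$-stable (Morita invariance: idempotents over $\mcK\otimes_{\pi}D^{\sim}$ and over $M_\infty(D^{\sim})$ carry the same $K_0$), and --- the one point that is not formal --- diffotopy invariant, i.e. $K_0(D)\cong K_0(D[0,1])$ for $D=A\otimes_{\pi}\mathcal{L}_p$. I would prove the last property by straightening: a $\mcC^{\infty}$ path $t\mapsto e(t)$ of idempotents in $M_n((D[0,1])^{\sim})$ is conjugated to the constant idempotent $e(0)$ by the $\mcC^{\infty}$ path of invertibles $u(t)$ solving $u'=[e',e]\,u$, $u(0)=1$, and one must check that this solution exists in $M_n((D[0,1])^{\sim})$ with $u-1\in M_n(D[0,1])$. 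This amounts to a convergence estimate in the Fréchet topology of $M_n(D^{\sim})[0,1]$ that exploits the Banach-algebra structure of $\mathcal{L}_p$, and it is here that the hypothesis $p>1$ enters on the $K_0$ side --- note that the $kk^{\mathcal{L}_p}$ side is already independent of $p\geq1$ by Corollary 2.3.5 of \cite{MR2207702}, so the restriction must appear precisely here. This analytic straightening of idempotents, and of diffotopies between them in the injectivity step below, is the main obstacle of the whole argument.

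It remains to prove that $\eta_D$ is an isomorphism for every $D=A\otimes_{\pi}\mathcal{L}_p$, which is the real content of the theorem. Since $kk^{alg}(\CC,-)$ is diffotopy invariant, half exact for linearly split extensions and $\mcK$-stable, and since $-\otimes_{\pi}\mathcal{L}_p$ preserves linearly split extensions and diffotopies while $kk^{alg}(\CC,\mcK\otimes_{\pi}D)=kk^{alg}(\CC, D)$, the functor $F(A):=kk^{alg}(\CC, A\otimes_{\pi}\mathcal{L}_p)$ has the same three properties as $G$; by the universal property (Theorem 7.26 of \cite{MR2340673}) both factor through $kk^{alg}$ and $\eta$ is a natural transformation of such functors, so one only has to verify that $\eta_D$ is bijective for each such $D$. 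For surjectivity I would take an arbitrary class in $kk^{alg}(\CC, D)$, represent it by a morphism $J^k\CC\to\mcK\otimes_{\pi}S^kD$, use Bott periodicity together with the $kk$-equivalences $J^k\CC\simeq S^k\CC$ (which hold because the tensor algebra is contractible) to bring it into the form of a quasihomomorphism $\CC\rightrightarrows E\rhd\mcK\otimes_{\pi}D$, and then apply the holomorphic functional calculus once more --- available because $\mathcal{L}_p$-stabilized Fréchet algebras are spectrally rich enough --- to replace it by a pair of idempotents, i.e. by an element in the image of $\eta$. For injectivity I would run the straightening argument of the previous paragraph on a diffotopy witnessing $\eta_D(x)=0$. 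Apart from this analytic manipulation of idempotents and of their diffotopies over $\mathcal{L}_p$-stabilized Fréchet algebras, every step is a formal consequence of the $kk^{alg}$-machinery recalled above.
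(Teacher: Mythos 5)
The paper does not prove this statement at all: it is quoted verbatim from Cuntz--Thom \cite{MR2207702} and used as a black box, so there is no internal proof to compare against. Measured against the argument in \cite{MR2207702}, the first two thirds of your sketch are on target: the comparison map $K_0(D)\to kk^{alg}(\CC,D)$ for $D=A\otimes_{\pi}\mathcal{L}_p$ is indeed obtained by turning an idempotent $e\in M_n(D^{\sim})$ lifting a trivial one into a quasihomomorphism $\CC\rightrightarrows M_n(D^{\sim})\triangleright M_n(D)$, and the analytic heart of the theorem is indeed the verification that $A\mapsto K_0(A\otimes_{\pi}\mathcal{L}_p)$ is split exact, stable and diffotopy invariant, with $p>1$ entering in that analytic input.

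The genuine gap is in your final paragraph. The reduction ``both $F$ and $G$ are split exact, diffotopy invariant and $\mcK$-stable and $\eta$ is natural, so one only has to verify that $\eta_D$ is bijective for each $D$'' is circular: bijectivity of $\eta_D$ \emph{is} the theorem, and the universal property (Theorem 7.26 in \cite{MR2340673}) only says that $G$ factors through $\mathfrak{KK}^{alg}$; it does not identify $G$ with the corepresented functor $kk^{alg}(\CC,-\otimes_{\pi}\mathcal{L}_p)$, which is exactly what must be proved. Your proposed repair --- holomorphic functional calculus to convert an arbitrary $kk$-class into a pair of idempotents, and ODE-straightening $u'=[e',e]\,u$ of diffotopies --- is not available at this level of generality: $A$ is an arbitrary locally convex algebra, so $M_n(D^{\sim})$ is neither Banach nor an $m$-algebra, and neither the functional calculus nor the existence and convergence of the ODE solution is automatic there. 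The way the argument is actually closed in \cite{MR2207702} is by constructing an explicit inverse: since $K_0(-\otimes_{\pi}\mathcal{L}_p)$ is half exact, diffotopy invariant and stable, it admits long exact sequences and Bott periodicity, whence $K_0(J^{2k}\CC\otimes_{\pi}\mathcal{L}_p)\cong\ZZ$ and $K_0(\mcK\otimes_{\pi}S^{2k}D\otimes_{\pi}\mathcal{L}_p)\cong K_0(D)$; a class in $kk^{alg}(\CC,D)$ represented by a morphism $J^{2k}\CC\to\mcK\otimes_{\pi}S^{2k}D$ is then evaluated on the generator of that copy of $\ZZ$, and one checks that the two composites with $\eta_D$ are identities. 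Without this (or an equivalent) construction of the inverse, your sketch does not close.
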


\begin{cor}[Corollary 6.2.3 in \cite{MR2207702}]
The coefficient ring $kk^{\mathcal{L}_p}_*(\CC, \CC)$ is isomorphic to $\ZZ[u, u^{-1}]$ with $deg(u)=2$.
\end{cor}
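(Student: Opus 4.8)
The statement will follow from the preceding theorem together with Bott periodicity, once the algebraic $K$-theory of $\mathcal{L}_p$ and of its smooth suspension is computed. Write $R_*=kk^{\mathcal{L}_p}_*(\CC,\CC)$. Since $kk^{\mathcal{L}_p}_*$ is by definition $kk^{alg}_*(-,-\otimes_\pi\mathcal{L}_p)$ and the suspension functor commutes with $-\otimes_\pi\mathcal{L}_p$, Theorem~\ref{bott_periodicity} shows that $R_*$ is $2$-periodic and that Bott periodicity is implemented by an invertible element $u\in R_2$. Hence it suffices to prove $R_0\cong\ZZ$ and $R_1=0$ and then to note that $R_{2n}=u^nR_0$ and $R_{2n+1}=u^nR_1$; this immediately gives $R_*\cong\ZZ[u,u^{-1}]$ with $\deg u=2$.

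For $R_0$, I would apply the preceding theorem with $A=\CC$: it gives $R_0=kk^{\mathcal{L}_p}_0(\CC,\CC)=K_0(\CC\otimes_\pi\mathcal{L}_p)=K_0(\mathcal{L}_p)$. Then I would check $K_0(\mathcal{L}_p)\cong\ZZ$: indeed $\mathcal{L}_p$ is a two-sided ideal of $B(\HH)$ containing all finite-rank operators and closed under the holomorphic functional calculus, so every idempotent over $\mathcal{L}_p^+$ is conjugate to a finite-rank projection, and the operator trace induces an isomorphism $K_0(\mathcal{L}_p)\xrightarrow{\ \sim\ }\ZZ$ sending the rank-one projection $e_{00}$ to $1$.

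The main point is $R_1=0$. Using $kk^{alg}_1(A,B)=kk^{alg}_0(A,SB)$ and then the preceding theorem with $A=S\CC$, one gets $R_1=kk^{\mathcal{L}_p}_0(\CC,S\CC)=K_0(S\CC\otimes_\pi\mathcal{L}_p)=K_0(S\mathcal{L}_p)$. I would then argue that $K_0$ of the smooth suspension of $\mathcal{L}_p$ vanishes: after stabilizing and applying functional calculus, an idempotent over $(S\mathcal{L}_p)^+=\mathcal{L}_p(0,1)^+$ is a smooth path $t\mapsto p(t)$ of finite-rank projections which is constant near each endpoint with a common scalar value there, and since the space of projections of a fixed rank is path connected, homotopy invariance of $K_0$ identifies its class with that of the constant path, i.e.\ with $0$ in the reduced group. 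By $2$-periodicity all odd components of $R_*$ then vanish while all even ones are $\cong\ZZ$, completing the computation as in the first paragraph.

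I expect the delicate step to be the vanishing $R_1=K_0(S\mathcal{L}_p)=0$. The subtlety is that this is the smooth, diffotopy-theoretic incarnation of $K_1(\mathcal{L}_p)$, which vanishes precisely because $\mathcal{L}_p$ is stable; the purely algebraic $K_1$ of a Schatten ideal need not be zero (for instance $\mathcal{L}_1$ carries the Fredholm determinant), so one has to be careful to work in the right framework and to justify rigorously the ``straightening of a path of projections'', either directly or via the stability and excision properties established for $\mathcal{L}_p$ when $p>1$. Everything else is a routine consequence of the structural results recalled above.
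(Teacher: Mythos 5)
The paper does not actually prove this statement: it is quoted verbatim from Cuntz--Thom (Corollary 6.2.3 of \cite{MR2207702}) and used as a black box, so there is no internal argument to compare against. Your reconstruction does follow the architecture of the source: Bott periodicity reduces everything to $R_0$ and $R_1$, Theorem 6.2.1 converts these into $K_0(\mathcal{L}_p)$ and $K_0(S\mathcal{L}_p)$, and $K_0(\mathcal{L}_p)\cong\ZZ$ is standard (for $p>1$ the isomorphism is given by the relative index of a pair of projections, e.g.\ the trace of an odd power of the difference, rather than by the trace of the difference itself, which need not be trace class --- a routine adjustment; the case $p=1$ is then recovered from the isomorphism of the groups $kk^{\mathcal{L}_p}$ for different $p$, Corollary 2.3.5 of \cite{MR2207702}).

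The genuine gap is in the step $R_1=K_0(S\mathcal{L}_p)=0$. A class in $K_0(S\mathcal{L}_p)$ is represented by a \emph{loop} of idempotents $t\mapsto p(t)$ based at a scalar idempotent $e_0$, and killing it requires a stable homotopy \emph{through such loops} to the constant loop --- a $\pi_1$-statement about the stabilized idempotent space. Path-connectivity of the space of fixed-rank projections is only a $\pi_0$-statement: it connects each individual $p(t_0)$ to $e_0$ but does not contract the loop. Indeed, the stable triviality of $\pi_1$ of the idempotent space is essentially equivalent to the assertion you are trying to prove (equivalently, to $K_1^{top}(\mathcal{L}_p)=0$, i.e.\ to the connectedness of $GL_\infty(\mathcal{L}_p^+)$ combined with the standard lifting of a path of idempotents to a path of invertibles), so the argument as written assumes what it must show. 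To close it you would need either to prove connectedness of $GL_\infty(\mathcal{L}_p^+)$ directly (polar decomposition and holomorphic functional calculus in the Banach algebra $M_n(\mathcal{L}_p^+)$, using that $\mathcal{L}_p$ is dense and spectrally invariant in the compacts) and then lift the loop, or to deduce the vanishing from the functorial machinery of \cite{MR2207702} (split exactness, diffotopy invariance and $\mathcal{L}_p$-stability of $A\mapsto K_0(A\otimes_\pi\mathcal{L}_p)$) rather than by hand. You correctly flag this as the delicate point, but the justification offered does not close it.
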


This implies that  $kk^{\mathcal{L}_p}_0(\CC, \CC)=\ZZ$ and  $kk^{\mathcal{L}_p}_1(\CC, \CC)=0$.

Consider the category $\mathfrak{KK}^{\mathcal{L}_p}$ whose objects are locally convex algebras and whose morphisms are
given by the graded groups $kk_{*}^{\mathcal{L}_p}(A,B)$.
Since $kk_{*}^{\mathcal{L}_p}:\mathfrak{lca}\to \mathfrak{KK}^{\mathcal{L}_p}$ is diffotopy invariant, half exact for linearly split extensions and is $\mcK$-stable (see Lemma 7.20 in \cite{MR2340673}), by the universal property of $kk_{*}^{alg}$ we have a functor $\mathfrak{KK}^{alg}\to \mathfrak{KK}^{\mathcal{L}_p}$.


\subsection{Weak Morita equivalence}\

In the context of separable $\Cstar$-algebras, two algebras $A$ and $B$ are strong Morita equivalent if and only if $\KK\otimes A\cong \KK\otimes B$ (they are
 stably isomorphic). Therefore a strong Morita equivalence of separable $\Cstar$-algebras induces an equivalence in $KK$. In the case of locally convex
 algebras we recall the definition of weak Morita equivalence from \cite{MR2240217}, which still give us an isomorphism between two objects in $\mathfrak{KK}^{alg}$.

A Morita context yields the data required in order to define maps $A\to \mcK\otimes_{\pi} B$.

\begin{defn}\label{MoritaDef}
Let $A$ and $B$ be locally convex algebras. A Morita context from $A$ to $B$ consists of a locally convex algebra $E$ that contains $A$ and $B$ as subalgebras
and two sequences $(\xi_i)_{i\in\NN}$ and $(\eta_j)_{j\in \NN}$ of elements of $E$ that satisfy
\begin{enumerate}
\item $\eta_jA\xi_i\subset B$ for all $i, j$.
\item The sequence $(\eta_ja\xi_i)$ is rapidly decreasing for each $a\in A$. That is, for each continuous seminorm $\alpha$ in $B$, $\alpha(\eta_ja\xi_i)$
is rapidly decreasing in $i, j$.
\item For all $a\in A$, $(\sum \xi_i\eta_i)a=a$.
\end{enumerate}
\end{defn}
A Morita context $((\xi_i), (\eta_j))$ from $A$ to $B$ determines a homomorphism $A\to \mcK \otimes_{\pi} B$ defined by
 $a\mapsto \sum_{i,j\in\NN}e_{ij}\otimes \eta_j a\xi_i $. Thus it determines an element $kk((\xi_i), (\eta_j))$ of $kk_0^{alg}(A, B)$.

In the next proposition, we give conditions for a Morita context to determine an equivalence in $\mathfrak{KK}^{alg}$.

\begin{prop}\label{MoritaCond}
Let $((\xi_i), (\eta_j))$ be a Morita context from $A$ to $B$ in $E$. If $((\xi_l'), (\eta_k'))$ is a Morita context from $B$ to $A$ in the same locally convex
algebra and if $A\xi_i\xi_l'\subset A$ and $\eta_k'\eta_j A\subset A$ for all $i, j, k, l$; then
$$kk((\xi_i), (\eta_j))\cdot kk((\xi_l'), (\eta_k'))=1_A.$$
Therefore, if we also have $B\xi_l'\xi_i\subset B$ and $\eta_k\eta_j'B\subset B$ for all $i, j, k, l$, then $kk((\xi_i), (\eta_j))$ is invertible in
$kk^{alg}$.
\end{prop}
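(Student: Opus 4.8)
The plan is to reduce the statement to the composition formula $kk((\xi_i),(\eta_j))\cdot kk((\xi_l'),(\eta_k'))=1_A$ and its symmetric counterpart, and then invoke associativity of the $kk^{alg}$-product. First I would spell out the two homomorphisms $\Phi:A\to\mcK\otimes_\pi B$, $\Phi(a)=\sum_{i,j}e_{ij}\otimes\eta_j a\xi_i$, and $\Psi:B\to\mcK\otimes_\pi A$, $\Psi(b)=\sum_{k,l}e_{kl}\otimes\eta_k' b\xi_l'$, coming from the two Morita contexts; these are well defined and continuous by conditions (1) and (2) of Definition~\ref{MoritaDef}. To form their product in $kk^{alg}$ I would compose $\Psi$ with the identity on $\mcK$ to get $(\mathrm{id}_{\mcK}\otimes\Psi):\mcK\otimes_\pi B\to\mcK\otimes_\pi\mcK\otimes_\pi A$, use the ($\mcK$-stable) identification $\mcK\otimes_\pi\mcK\cong\mcK$ via a fixed isomorphism $\NN\times\NN\cong\NN$, and compute the composite $A\to\mcK\otimes_\pi A$ explicitly: it sends $a$ to $\sum_{i,j,k,l}e_{(i,k),(j,l)}\otimes\eta_k'\eta_j a\xi_i\xi_l'$. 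The hypotheses $A\xi_i\xi_l'\subset A$ and $\eta_k'\eta_j A\subset A$ guarantee each entry $\eta_k'\eta_j a\xi_i\xi_l'$ genuinely lies in $A$, so this is a bona fide map $A\to\mcK\otimes_\pi A$ representing the product $kk((\xi_i),(\eta_j))\cdot kk((\xi_l'),(\eta_k'))$.

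Next I would exhibit a diffotopy from this composite to the standard corner inclusion $i_A:a\mapsto e_{00}\otimes a$, which represents $1_A$. The key observation is that the composite map has the form $a\mapsto W\,(e_{00}\otimes a)\,W^{*}$ for a "partial isometry" built from the elements $\sum_i\xi_i\eta_i$ (note condition (3), $(\sum_i\xi_i\eta_i)a=a$, says this acts as a left unit on $A$), and such inner conjugations by an isometry over $\mcK$ are standard to trivialize up to diffotopy — this is exactly the rotation-homotopy argument used in the proof of $\mcK$-stability. Concretely, one uses the unitary rotation $R_t=\begin{pmatrix}\cos t & -\sin t\\ \sin t & \cos t\end{pmatrix}$ in $M_2(\mcK^+)$ to interpolate, so that conjugating $e_{00}\otimes a$ by the relevant (co)isometry is diffotopic to conjugating by the identity; the bookkeeping is just keeping track of where conditions (1)–(3) and the two containment hypotheses enter to keep everything inside $\mcK\otimes_\pi A$. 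This gives $kk((\xi_i),(\eta_j))\cdot kk((\xi_l'),(\eta_k'))=1_A$.

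For the "Therefore" clause, one runs the identical argument with the roles of $A$ and $B$ interchanged: the data $((\xi_l'),(\eta_k'))$ is a Morita context from $B$ to $A$, and the extra hypotheses $B\xi_l'\xi_i\subset B$ and $\eta_k\eta_j' B\subset B$ are precisely what is needed so that the composite $B\to\mcK\otimes_\pi B$ lands inside $\mcK\otimes_\pi B$; the same rotation-homotopy then yields $kk((\xi_l'),(\eta_k'))\cdot kk((\xi_i),(\eta_j))=1_B$. Associativity of the product (the Lemma preceding Definition~\ref{def kk Lp}) then makes $kk((\xi_i),(\eta_j))$ a two-sided inverse of $kk((\xi_l'),(\eta_k'))$, hence invertible in $kk^{alg}$.

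The main obstacle I anticipate is the diffotopy step, i.e.\ verifying carefully that the inner-conjugation map $a\mapsto W(e_{00}\otimes a)W^{*}$ really is diffotopic to $i_A$ \emph{within continuous homomorphisms into $\mcK\otimes_\pi A$} — one must check that the rotation homotopy stays continuous for the locally convex topologies and, crucially, that at every time $t$ the matrix entries remain in $A$ (not merely in the larger algebra $E$), which is exactly where the containment hypotheses $A\xi_i\xi_l'\subset A$ and $\eta_k'\eta_j A\subset A$ are used. Everything else (well-definedness of the $kk$-classes, the identification $\mcK\otimes_\pi\mcK\cong\mcK$, associativity) is either already in the excerpt or routine.
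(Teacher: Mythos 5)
Your outline is correct and is essentially the argument of the source the paper cites for this statement (the paper's own ``proof'' is just a reference to Lemma 7.2 of Cuntz's \emph{Bivariant $K$-theory and the Weyl algebra}): compose the two induced maps $A\to\mcK\otimes_\pi B\to\mcK\otimes_\pi\mcK\otimes_\pi A\cong\mcK\otimes_\pi A$, observe that the containment hypotheses force the resulting entries $\eta_k'\eta_j a\xi_i\xi_l'$ into $A$ so the composite is again induced by a Morita context from $A$ to $A$, and trivialize it to the corner embedding by the standard rotation diffotopy, with condition (3) supplying the isometry relation; the symmetric computation plus associativity then gives invertibility. No gaps beyond the routine bookkeeping you already flag.
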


\begin{proof}
See Lemma 7.2 in \cite{MR2240217}.
\end{proof}


\subsection{Quasihomomorphisms}\

The definition of a quasihomomorphism goes back to \cite{MR733641}. We give the definition of a quasihomomorphisms in the context of locally convex algebras
from \cite{MR2207702}. A quasihomomorphism from $A$ to $B$ determines an element in $kk^{alg}(A,B)$. As a matter of fact it determines a morphism from $E(A)$
to $E(B)$  for any split-exact functor
$E:\mathfrak{lca}\to \mathfrak{C}$ where $\mathfrak{C}$ is an additive category. The reader can also see Section 4 in \cite{MR3054304} and
Section 3.3.1 in \cite{MR2340673}.
\begin{defn}
Let $A$, $B$ and $D$ be locally convex algebras with $B$ a closed subalgebra of $D$. A quasihomomorphism from $A$ to $B$ in $D$ is a pair of homomorphisms
$(\alpha, \bar\alpha)$ from $A$ to $D$ such that $\alpha(x)-\bar\alpha(x)\in B$, $\alpha(x)B\subset B$ and $B\alpha(x)\subset B$ for all $x\in A$. We denote
such quasihomomorphism by $(\alpha, \bar\alpha): A \rightrightarrows D\triangleright B$.
\end{defn}

The original definition of quasihomomorphisms required $B$ to be an ideal in $D$ (Definition 2.1 in \cite{MR733641}). Note that if $B$ is an ideal then the
conditions $\alpha(x)B\subset B$ and $B\alpha(x)\subset B$ are satisfied automatically. On the other hand we only need to check these conditions in a set of
algebraic generators of $A$.

\begin{rmk}
Let $G\subset A$ be a subset that generates $A$ as a locally convex algebra. If $\alpha(x)-\bar\alpha(x)\in B$, $\alpha(x)B\subset B$ and $B\alpha(x)\subset B$
 for all $x\in G$, then the conditions are also satisfied for all $x\in A$.
\end{rmk}

Next we see how a quasihomomorphism $(\alpha, \alpha'): A \rightrightarrows D\triangleright B$ determines an element
$kk(\alpha, \bar\alpha)\in kk^{alg}(A, B)$. As a matter of fact, we work with split exact functors from $\mathfrak{lca}$ to an additive category
$\mathfrak{C}$. An extension
$0\to A \to B \overset{\pi}\to C \to 0$
in $\mathfrak{lca}$  is split if there is a morphism of locally convex algebras $s:C\to B$ such that $\pi s=id_{C}$.

\begin{defn}
Let $\mathfrak{C}$ be an additive category. A sequence $A\to B\to C$ in $\mathfrak{C}$ is split exact if it is isomorphic to the sequence $A\to A\oplus C\to C$
with the natural inclusion and projection. A functor $E:\mathfrak{lca}\to \mathfrak{C}$ is called split exact if it sends split extensions in $\mathfrak{lca}$
 to split exact sequences in $\mathfrak{C}$.
\end{defn}

\begin{lem}\label{def-quasi-homo}[Section 3.2 in \cite{MR2207702}]
Let $E$ be a split exact functor from $\mathfrak{lca}$ to an additive category $\mathfrak{C}$. Then a quasihomomorphism
$(\alpha, \alpha'): A \rightrightarrows D\triangleright B$
determines a morphism $E(\alpha, \bar\alpha):E(A)\to E(B)$ in $\mathfrak{C}$.
\end{lem}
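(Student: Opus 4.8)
The plan is to use the universality of the tensor algebra together with the split-exactness of $E$ applied to suitable extensions built from the quasihomomorphism. First I would record the classical construction: given $(\alpha,\bar\alpha)\colon A\rightrightarrows D\triangleright B$, form the pullback algebra
$$
P=\{(d,d')\in D\oplus D\ :\ d-d'\in B\},
$$
which sits in a split extension
$$
0\to B\to P\overset{\pi_1}\to D\to 0,
$$
where $\pi_1(d,d')=d$ and the splitting is the diagonal $\Delta\colon D\to P$, $\Delta(d)=(d,d)$. Applying the split-exact functor $E$ gives a direct-sum decomposition $E(P)\cong E(B)\oplus E(D)$, and in particular a retraction $r\colon E(P)\to E(B)$ which is the identity on $E(B)$ and kills the image of $E(\Delta)$.

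Next I would exploit that the pair $(\alpha,\bar\alpha)$ assembles into a single homomorphism: the map $\widehat\alpha\colon A\to P$ defined by $\widehat\alpha(x)=(\alpha(x),\bar\alpha(x))$ is a genuine homomorphism of locally convex algebras, since both $\alpha$ and $\bar\alpha$ are homomorphisms and, by hypothesis, $\alpha(x)-\bar\alpha(x)\in B$ for all $x\in A$ (using the Remark, it suffices to check this and the conditions $\alpha(x)B\subset B$, $B\alpha(x)\subset B$ on a set of generators). Then I would simply define
$$
E(\alpha,\bar\alpha)\ :=\ r\circ E(\widehat\alpha)\ :\ E(A)\longrightarrow E(B).
$$
This is manifestly a morphism in the additive category $\mathfrak C$, so the existence part of the statement is immediate once $\widehat\alpha$ and the retraction $r$ are in place; the conditions $\alpha(x)B\subset B$ and $B\alpha(x)\subset B$ are exactly what guarantees that $B$ is closed under multiplication by the image of $\widehat\alpha$, which is implicitly used when one wants $E(\alpha,\bar\alpha)$ to behave functorially and compatibly with products (and to match the $kk^{alg}$-class construction referenced later).

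The main subtlety — and the only real content beyond bookkeeping — is checking that $P$ is genuinely a locally convex algebra (completeness of the pullback as a closed subspace of $D\oplus D$, joint continuity of multiplication), so that the extension $0\to B\to P\to D\to 0$ lives in $\mathfrak{lca}$ and $E$ can be applied; and that this extension is split in the sense required by the definition of a split-exact functor, i.e. the diagonal section $\Delta$ is a morphism of locally convex algebras. Both are routine: $B$ is closed in $D$ by hypothesis, hence $P$ is closed in $D\oplus D$ and thus complete, and $\Delta$ is obviously an algebra homomorphism with $\pi_1\Delta=\mathrm{id}_D$. One should also remark, for consistency with the rest of the paper, that when $E=kk^{alg}_*$ this construction recovers the class $kk(\alpha,\bar\alpha)\in kk^{alg}(A,B)$ used in the sequel; I would state this as an immediate consequence of the definitions rather than prove it in detail here.
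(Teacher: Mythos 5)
There is a genuine gap at the very first step: your pullback
$P=\{(d,d')\in D\oplus D : d-d'\in B\}$ is in general \emph{not} a subalgebra of $D\oplus D$, because in this paper $B$ is only required to be a closed \emph{subalgebra} of $D$, not an ideal (see the remark after the definition of quasihomomorphism: the ideal hypothesis of Cuntz's original definition is deliberately dropped and replaced by the conditions $\alpha(x)B\subset B$, $B\alpha(x)\subset B$). Concretely, if $d_i-d_i'=b_i\in B$ then
$d_1d_2-d_1'd_2'=d_1b_2+b_1d_2-b_1b_2$, and $d_1b_2,\,b_1d_2$ need not lie in $B$ for arbitrary $d_1,d_2\in D$; only products of $B$ with elements of $\alpha(A)$ (and of $B$ with itself) are controlled by the hypotheses. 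So the split extension $0\to B\to P\to D\to 0$ you want to feed to $E$ does not exist in $\mathfrak{lca}$. Your closing comment that the conditions $\alpha(x)B\subset B$, $B\alpha(x)\subset B$ are only "implicitly used" for functoriality misses that they are needed already to have an algebra containing both copies of the data; completeness/closedness of $P$ is not the issue. (Your construction is the standard one in the classical setting $B\trianglelefteq D$, and there it does produce the right map up to sign, but that is not the setting of this lemma.)

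The paper's proof repairs exactly this point by choosing the ambient algebra more carefully: it takes $D'$ to be the closed subalgebra of $A\oplus D$ generated by the elements $(a,\alpha(a))$ and $(0,b)$. Multiplicative closure of $D'$ is then precisely what the quasihomomorphism conditions guarantee, since the only cross terms that appear are $\alpha(a)b$, $b\alpha(a)$ and $bb'$. One gets a split extension $0\to B\to D'\to A\to 0$ with \emph{two} algebra sections $\alpha'(a)=(a,\alpha(a))$ and $\bar\alpha'(a)=(a,\bar\alpha(a))$; split exactness of $E$ makes $E(B)\to E(D')$ a kernel of $E(D')\to E(A)$, and $E(\alpha')-E(\bar\alpha')$ composes to zero with $E(D')\to E(A)$, hence factors through $E(B)$. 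If you want to keep the spirit of your argument, you must replace $P$ by the subalgebra of $D\oplus D$ generated by $\{(\alpha(a),\bar\alpha(a))\}$ and $0\oplus B$, but then the quotient is no longer $D$ (nor obviously $A$ when $\alpha$ is not injective), which is why working over $A$ as in the paper is the cleaner route.
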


\begin{proof}
Let $D'$ be the closed subalgebra of $A\oplus D$ generated by all elements $(a, \alpha(a))$ and $(0, b)$ with $a\in A$ and $b\in B$. Then we have an exact
 sequence
$$0\to B\to D'\to A\to 0$$
with the inclusion $B\subseteq D'$ given by $b\mapsto (0, b)$ and the projection $\pi:D'\to A$ defined by $\pi(a, x)=a$. This extension has two splits
$\alpha', \bar\alpha':A\to D'$ defined by $\alpha'(a)=(a, \alpha(a))$ and $\bar\alpha'(a)=(a, \bar\alpha(a))$. Because of the split-exactness of
$E$, $E(B)\to E(D')$ is a kernel of $E(D')\to E(A)$.  Therefore, the morphism $E(\alpha')-E(\bar\alpha'):E(A)\to E(D')$ defines a morphism
$E(\alpha, \bar\alpha):E(A)\to E(B)$.
\end{proof}

The following proposition summarizes some properties of quasihomomorphisms.

\begin{prop}\label{quasi-homo}
Let $E$ be a split exact functor from $\mathfrak{lca}$ to an additive category $\mathfrak{C}$ and $(\alpha, \alpha'): A \rightrightarrows D\triangleright B$ be
a quasihomomorphism from $A$ to $B$ in $D$. We have
\begin{enumerate}

\item $E(\alpha, \bar\alpha)=-E(\bar\alpha, \alpha)$.

\item Let $\phi=\alpha-\bar\alpha$. If $\phi(x)\bar\alpha(y)=\bar\alpha(y)\phi(x)=0$ for all $x, y\in A$, then $\phi$ is a homomorphism and
$E(\alpha, \bar\alpha)=E(\phi)$.

\item For any morphism $\phi: A'\to A$, $(\alpha\circ \phi, \bar\alpha \circ \phi):A'\to B$ is a quasihomomorphism from $A'$ to $B$ in $D$ and
$$E(\alpha\circ \phi, \bar\alpha \circ \phi)=E(\alpha, \bar\alpha)\circ E(\phi).$$

\item If $\psi: D \to F$ is a morphism such that $\psi|_B:B \to C\subset F$ and the morphisms $\psi\circ \alpha, \psi\circ \bar\alpha:A\to F$ define a
quasihomomorphism from $A$ to $C$ in $F$, then
$$E(\psi\circ \alpha, \psi\circ \bar\alpha)=E(\psi|_B)\circ E(\alpha, \bar\alpha).$$

\item Let $\alpha$ and $\bar\alpha$ be homomorphisms from $A$ to $D[0, 1]$ such that $\alpha(x)-\bar\alpha(x)\in B[0, 1]$, $\alpha(x)B[0, 1]\subset B[0, 1]$
and $B[0, 1]\alpha(x)\subset B[0, 1]$ for all $x\in A$. If $E$ is diffotopy invariant, then $E(\alpha_1, \bar\alpha_1)=E(\alpha_0, \bar\alpha_0)$ (where
 $\alpha_t=ev_t\circ \alpha$).
\end{enumerate}
\end{prop}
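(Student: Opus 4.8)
The plan is to reduce every item to the single construction of Lemma~\ref{def-quasi-homo}: the extension $0\to B\to D'\to A\to 0$, where $D'\subseteq A\oplus D$ is the closed subalgebra generated by the elements $(a,\alpha(a))$ and $(0,b)$, together with its two splits $\alpha'(a)=(a,\alpha(a))$ and $\bar\alpha'(a)=(a,\bar\alpha(a))$ and the resulting factorization of $E(\alpha')-E(\bar\alpha')$ through the kernel inclusion $\iota_B\colon E(B)\to E(D')$.

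For (1), interchanging $\alpha$ and $\bar\alpha$ produces exactly the same algebra $D'$ and swaps the two splits, so $E(\bar\alpha')-E(\alpha')=-\bigl(E(\alpha')-E(\bar\alpha')\bigr)$, which is the assertion $E(\bar\alpha,\alpha)=-E(\alpha,\bar\alpha)$. For (2), I would first check that $\phi$ is multiplicative: from $\phi(x)\bar\alpha(y)=0$ one gets $\alpha(x)\bar\alpha(y)=\bar\alpha(x)\bar\alpha(y)$, and from $\bar\alpha(y)\phi(x)=0$ one gets $\bar\alpha(x)\alpha(y)=\bar\alpha(x)\bar\alpha(y)$; substituting both into $\phi(x)\phi(y)=\alpha(x)\alpha(y)-\alpha(x)\bar\alpha(y)-\bar\alpha(x)\alpha(y)+\bar\alpha(x)\bar\alpha(y)$ collapses the right-hand side to $\alpha(xy)-\bar\alpha(xy)=\phi(xy)$. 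Since in addition $\phi(x)=\alpha(x)-\bar\alpha(x)\in B$ and $\phi(x)B,\,B\phi(x)\subseteq B$ by the quasihomomorphism hypotheses, $\phi$ is a genuine homomorphism $A\to B$. Then $\iota_B\circ\phi\colon A\to D'$ is a homomorphism whose image multiplies $\bar\alpha'(A)$ to zero on both sides (because $\phi$ does against $\bar\alpha$), and $\alpha'=\bar\alpha'+\iota_B\circ\phi$; the additivity of a split-exact functor for homomorphisms with orthogonal difference then gives $E(\alpha')-E(\bar\alpha')=E(\iota_B)\circ E(\phi)$, i.e. $E(\alpha,\bar\alpha)=E(\phi)$.

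Items (3) and (4) are functoriality of the construction. For (3), the algebra $D'_{A'}\subseteq A'\oplus D$ associated with $(\alpha\circ\phi,\bar\alpha\circ\phi)$ maps to $D'$ by $(a',x)\mapsto(\phi(a'),x)$, and this fits into a commuting ladder of the two universal extensions that is $\mathrm{id}_B$ on the ideals and $\phi$ on the quotients; applying $E$ and unwinding the definition of $E(\,\cdot\,,\,\cdot\,)$ yields $E(\alpha\circ\phi,\bar\alpha\circ\phi)=E(\alpha,\bar\alpha)\circ E(\phi)$. For (4), the morphism $\psi\colon D\to F$ induces $(a,x)\mapsto(a,\psi(x))$ from $D'$ to the corresponding algebra $D''\subseteq A\oplus F$ for $(\psi\circ\alpha,\psi\circ\bar\alpha)$, again a morphism of extensions that is $\psi|_B\colon B\to C$ on the ideals and $\mathrm{id}_A$ on the quotients, so the same diagram chase gives $E(\psi\circ\alpha,\psi\circ\bar\alpha)=E(\psi|_B)\circ E(\alpha,\bar\alpha)$.

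Finally, for (5) I would run the basic construction over $D[0,1]$ and $B[0,1]$, obtaining $E(\alpha,\bar\alpha)\colon E(A)\to E(B[0,1])$. Each evaluation $ev_t\colon D[0,1]\to D$ restricts to $ev_t\colon B[0,1]\to B$, so part (4) gives $E(ev_t)\circ E(\alpha,\bar\alpha)=E(ev_t\circ\alpha,\,ev_t\circ\bar\alpha)=E(\alpha_t,\bar\alpha_t)$. Since $ev_0\simeq ev_1$ as homomorphisms $B[0,1]\to B$ (via $f\mapsto f$, after the usual reparameterization so that all derivatives vanish at the endpoints), diffotopy invariance of $E$ gives $E(ev_0)=E(ev_1)$ on $E(B[0,1])$, hence $E(\alpha_0,\bar\alpha_0)=E(\alpha_1,\bar\alpha_1)$. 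The only step that is not pure diagram chasing is the additivity used in (2) — justifying $E(\alpha')-E(\bar\alpha')=E(\iota_B)\circ E(\phi)$ for homomorphisms with orthogonal difference — and that is where care is needed; everything else is formal manipulation of the universal extension of Lemma~\ref{def-quasi-homo}.
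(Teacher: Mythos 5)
Your proposal is correct. For item (5) --- the only part the paper actually proves rather than cites --- your argument coincides with the paper's: apply item (4) to the evaluations $ev_t\colon D[0,1]\to D$ and use diffotopy invariance to get $E(ev_0)=E(ev_1)$ on $B[0,1]$ (the paper's text says ``apply (3)'' but the formula it uses is (4), as you have it); the only detail you elide is the routine verification, done explicitly in the paper via surjectivity of $ev_t\colon B[0,1]\to B$, that $(ev_t\circ\alpha, ev_t\circ\bar\alpha)$ is again a quasihomomorphism, which is a hypothesis of (4). For items (1)--(4) the paper simply cites Proposition 21 of Grensing; your direct derivations from the extension $0\to B\to D'\to A\to 0$ of Lemma \ref{def-quasi-homo} are the standard ones and are sound, the single non-formal ingredient being the orthogonal-additivity property of split-exact functors invoked in (2), which you correctly identify as the step requiring separate justification.
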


\begin{proof}
For $(1)-(4)$ see Proposition 21 in \cite{MR2964680}.

To prove $(5)$, we consider the evaluation maps $ev_t:D[0, 1]\to D$. They restrict to the evaluation maps $ev_t: B[0, 1]\to B$. To apply $(3)$ we need to check
that the morphisms $ev_t\circ \alpha, ev_t\circ \bar\alpha: A\to D$ define a quasihomomorphism from $A$ to $B$ in $D$. First notice that
$(ev_t\circ \alpha)(a)-(ev_t\circ\bar\alpha)(a)=(ev_t\circ (\alpha-\bar\alpha))(a)$ is in $B$ because $(\alpha-\bar\alpha)(a)\in B[0, 1]$.
 Now consider an element $b\in B$. We want to prove that the product $(ev_t\circ \alpha)(a)b$ is in $B$. Consider a function $\phi\in B[0, 1]$ such that
 $ev_t\circ f=b$. Then $(ev_t\circ \alpha)(a)b=ev_t\circ (\alpha(a)f)$ and $\alpha(a)f\in B[0, 1]$. Similarly, we can prove that
 $B(ev_t\circ \alpha)(a)\subseteq B$.

We can now apply $(3)$ and we obtain $E(ev_t\circ \alpha, ev_t\circ \bar\alpha)=E(ev_t)\circ E(\alpha, \bar\alpha)$. Since $E$ is diffotopy invariant,
we have $E(ev_0)=E(ev_1)$, which concludes the result.
\end{proof}


\section{Generalized Weyl algebras}
\label{generalized Weyl algebra}

Generalized Weyl algebras were introduced by Bavula (see \cite{MR1804517}) and have been amply studied. Examples of generalized Weyl algebras include the Weyl
 algebra, the quantum Weyl algebra, the quantum plane, the enveloping algebra of $\mathfrak{sl}_2$ $U(\mathfrak{sl_2})$, its primitive factors
 $B_\lambda=U(\mathfrak{sl}_2)/\langle C-\lambda \rangle$ where $C$ is the Casimir element (see Example 4.7 in \cite{MR1247356}) and the quantum weighted projective lines $\mathcal{O}(\mathbb{WP}_q(k, l))$ (see \cite{MR2989456}).

In our context, generalized Weyl algebras provide a family of examples of $\ZZ$-graded algebras that are smooth generalized crossed products and do not satisfy
 the condition of being tame smooth (and therefore they are outside the framework of \cite{MR3054304}).

\begin{defn}\label{GWA} Let $D$ be a ring, $\sigma \in Aut(D)$ and
$a$ a central element of $D$. The generalized Weyl algebra $D(\sigma, a)$ is the algebra generated by $x$ and $y$ over $D$ satisfying
\begin{equation}\label{relations_GWA}
xd=\sigma(d)x,\  yd=\sigma^{-1}(d)y,\ yx=a \text{ and } xy=\sigma(a)
\end{equation}
for all $d\in D$.
\end{defn}

\begin{exs}
The following are examples of generalized Weyl algebras:
\begin{enumerate}
\item The Weyl algebra
$$A_1(\CC)=\CC\langle x, y | xy-yx=1\rangle$$
is isomorphic to $\CC[h](\sigma, h)$, with $\sigma(h)=h-1$.
\item The quantum Weyl algebra
$$A_q(\CC)=\CC\langle x, y | xy-qyx=1\rangle$$
is isomorphic to $\CC[h](\sigma, h-1)$, with $\sigma(h)=qh$.
\item The quantum plane
$$\CC\langle x, y |xy=qyx\rangle$$
is isomorphic to $\CC[h](\sigma, h)$, with $\sigma(h)=qh$.
\item The primitive quotients of $U(\mathfrak{sl}_2)$ (see Example 3.2 in \cite{MR1804517}),
$$B_{\lambda}=U(\mathfrak{sl}_2)/\langle c-\lambda\rangle, \quad \lambda\in\CC,$$
are isomorphic to $\CC[h](\sigma, P)$, with $\sigma(h)=h-1$ and $P(h)=-h(h+1)-\lambda/4$.
\item The quantum weighted projective line  or the quantum spindle algebra $\mathcal{O}(\mathbb{WP}_q(k, l))$ is
 isomorphic to $\CC[h](\sigma, P)$ with $P(h)=h^{k}\prod_{i=0}^{l-1}(1-q^{-2i}h)$ and $\sigma(h)=q^{2l}h$ (see Theorem 2.1 in \cite{MR2989456} and Example 3.8 in \cite{MR3465890}).
\end{enumerate}
\end{exs}

\begin{lem}\label{GWA-grading}
A generalized Weyl algebra has a $\ZZ$-grading $A=\bigoplus_{n\in \ZZ} A_n$ where $A_0=D$ and
\begin{equation}\label{homogeneous_subspaces_GWA}
A_n=
\begin{cases}
Dy^n & n>0 \\
Dx^n & n<0.
\end{cases}
\end{equation}
\end{lem}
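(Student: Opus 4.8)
The plan is to verify that the decomposition $A = \bigoplus_{n\in\ZZ} A_n$ with $A_0 = D$, $A_n = Dy^n$ for $n>0$ and $A_n = Dx^n$ for $n<0$ is a well-defined $\ZZ$-grading, by checking three things in turn: that these subspaces span $A$, that the sum is direct, and that $A_m A_n \subseteq A_{m+n}$. First I would establish that $A$ is spanned by the monomials $dx^k$ and $dy^k$ with $d\in D$ and $k\geq 0$. Using the commutation relations in~\eqref{relations_GWA}, any word in $x$, $y$ and elements of $D$ can be rewritten: the relations $xd = \sigma(d)x$ and $yd = \sigma^{-1}(d)y$ let one move all elements of $D$ to the left, while $yx = a$ and $xy = \sigma(a)$ (with $a$ central) let one cancel any adjacent $yx$ or $xy$ pair at the cost of multiplying by a central element of $D$. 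Iterating, every word collapses to one of the stated monomials, so the subspaces $A_n$ span $A$. It then follows that each $A_n$ is exactly the span of such monomials of net degree $n$ (assigning $x$ degree $-1$, $y$ degree $+1$, and $D$ degree $0$), and one reads off $A_0 = D$.

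Next I would check $A_m A_n \subseteq A_{m+n}$. This is a short case analysis on the signs of $m$ and $n$. When $m,n$ have the same sign it is immediate, since e.g. $(dy^m)(d'y^n) = d\,\sigma^{-m}(d')\,y^{m+n} \in A_{m+n}$ using $y^m d' = \sigma^{-m}(d')y^m$; similarly for the negative case with $x$. When the signs differ, say $m>0$ and $n<0$, the product $(dy^m)(d'x^{-n})$ requires moving $d'$ past $y^m$ and then repeatedly applying $yx = a$ to cancel the innermost $yx$ pairs; because $a$ is central, each cancellation produces a factor of $a$ that stays in $D$, and one ends with an element of $Dy^{m+n}$ if $m+n>0$, of $Dx^{-(m+n)}$ if $m+n<0$, or of $D$ if $m+n=0$. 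The symmetric case $m<0$, $n>0$ uses $xy = \sigma(a)$ analogously. Hence the multiplication respects the grading.

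The one genuinely substantive point — and the main obstacle — is the directness of the sum, equivalently the statement that the spanning monomials $\{dx^k\}_{k>0} \cup D \cup \{dy^k\}_{k>0}$ are "as independent as they should be," i.e.\ that $D$ embeds in $A$ and that $y^n$ (resp.\ $x^n$) is a free generator of $A_n$ as a left $D$-module. This cannot be deduced from the relations alone by formal manipulation; it is the assertion that the generalized Weyl algebra is genuinely $\ZZ$-graded and not smaller than expected. The standard way to settle it is to invoke Bavula's original construction: $D(\sigma,a)$ is built so that $A = \bigoplus_{n\in\ZZ} A_n$ with $A_n$ free of rank one over $D$ is part of the definition, or equivalently to exhibit a faithful $\ZZ$-graded representation (for instance on a suitable module of Laurent-type expressions, analogous to the faithful representations constructed later in Section~\ref{generalized Weyl algebra}). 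I would therefore cite~\cite{MR1804517} for the fact that the $A_n$ are free left $D$-modules of rank one on $y^n$ (for $n\geq 0$) and on $x^{-n}$ (for $n\leq 0$), from which directness, and hence the grading, follows immediately together with the spanning and multiplicativity established above.
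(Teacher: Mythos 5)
Your proof is correct, but it handles the one point you flag as "genuinely substantive" differently from the paper, and in doing so it conflates two distinct issues. The paper's argument for directness is purely formal: it observes that the defining relations $xd-\sigma(d)x$, $yd-\sigma^{-1}(d)y$, $yx-a$, $xy-\sigma(a)$ are all homogeneous for the grading $\deg y=1$, $\deg x=-1$, $\deg D=0$, so the ideal they generate in the free $D$-ring on $x,y$ is homogeneous and the grading descends to the quotient; the directness of $\bigoplus_n A_n$ is then automatic, with no need for a faithful representation or an external construction. What is \emph{not} formal is the stronger claim you fold into "directness," namely that $D$ embeds in $A$ and that $y^n$ (resp.\ $x^{|n|}$) freely generates $A_n$ as a left $D$-module: that is indeed not a consequence of the relations alone, and citing Bavula (or exhibiting a faithful graded representation, as the paper does later in Lemma~\ref{representation} for the cases it needs) is the right way to get it. But the lemma as stated only asserts $A_n=Dy^n$ and $A_n=Dx^{|n|}$ as subsets, which needs only spanning plus the homogeneity observation; so your appeal to \cite{MR1804517} proves more than required and slightly misdiagnoses where the formal argument stops. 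The rest of your proof (the rewriting of words into $dx^k$, $dy^k$ and the check that $A_mA_n\subseteq A_{m+n}$) matches the paper's induction on monomial length using $x^ny^n=\sigma^n(a)\cdots\sigma(a)$ and $y^nx^n=\sigma^{-(n-1)}(a)\cdots a$. The trade-off: the paper's route is self-contained and shorter; yours records the freeness of the homogeneous components, which the paper does implicitly rely on later (e.g.\ when identifying $A_0$ with $\CC[h]$) but establishes there by other means.
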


\begin{proof}
Consider the grading in $A=D(\sigma, a)$ defined by setting the degree of $y$ equal to $1$, the degree of $x$ equal to $-1$, and the degree of all elements of
$D$ equal to $0$. That is, the degree of the monomial $\prod_{i=1}^{n} d_ix^{\alpha_i}y^{\beta_i}$, with $d_i\in D$, is equal to
$\sum_{i=1}^{n}\beta_i-\sum_{i=1}^{n}\alpha_i$. Since the relations defining $A$ are compatible with the grading, the algebra $A$ is $\ZZ$-graded.

Now consider the following relations in $A$. We have
\begin{eqnarray*}
x^{n}y^{n} &=& \sigma^{n}(a)\sigma^{n-1}(a)\dots \sigma(a) \\
y^{n}x^{n} &=& \sigma^{-(n-1)}(a)\sigma^{-(n-2)}(a)\dots a
\end{eqnarray*}
Using induction on the length of the monomial $\prod_{i=1}^{n} d_ix^{\alpha_i}y^{\beta_i}$ we prove (\ref{homogeneous_subspaces_GWA}). Note that
$Dy^{n}=y^{n}D$ and $Dx^{n}=x^{n}D$.
\end{proof}

In the case of generalized Weyl algebras over $\CC[h]$, we have the following result.

\begin{cor}\label{GWA_countable_basis}
The generalized Weyl algebra $A=\CC[h](\sigma, P)$, with $P\in \CC[h]$, has a countable basis over $\CC$.
\end{cor}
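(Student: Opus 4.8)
The plan is to exhibit an explicit countable spanning set of $A$ over $\CC$ and then to invoke the elementary fact that a complex vector space possessing a countable spanning set has a countable Hamel basis. First I would apply Lemma~\ref{GWA-grading}, which provides the $\ZZ$-grading $A=\bigoplus_{n\in\ZZ}A_n$ with $A_0=\CC[h]$, $A_n=\CC[h]\,y^n$ for $n>0$, and $A_n=\CC[h]\,x^{-n}$ for $n<0$. Since $\{h^k\}_{k\geq 0}$ is a countable $\CC$-basis of $\CC[h]$, each homogeneous component $A_n$ is spanned over $\CC$ by a countable set, and therefore the countable union
\[
\mathcal{B}=\{h^k x^n : k\geq 0,\, n\geq 1\}\cup\{h^k : k\geq 0\}\cup\{h^k y^n : k\geq 0,\, n\geq 1\}
\]
is a countable spanning set of $A$ over $\CC$.

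From here the corollary is immediate: enumerating $\mathcal{B}=\{b_1,b_2,\dots\}$ and discarding each $b_i$ that lies in the $\CC$-span of $\{b_1,\dots,b_{i-1}\}$ leaves a (finite or countably infinite) subfamily that is still spanning and now linearly independent. In fact even less is needed for the literal statement: $A$ is generated as a $\CC$-algebra by the finite set $\{h,x,y\}$, so the words in these generators already form a countable spanning set, and Lemma~\ref{GWA-grading} is used only to pin down the convenient explicit candidate $\mathcal{B}$.

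The one point with genuine content — which I would record for later use in the paper, but which is not required for the statement above — is that $\mathcal{B}$ is itself a $\CC$-basis of $A$; equivalently, that $A$ is free as a left (and as a right) $\CC[h]$-module on $\{1\}\cup\{x^n\}_{n\geq 1}\cup\{y^n\}_{n\geq 1}$. This is Bavula's PBW-type structure theorem for generalized Weyl algebras~\cite{MR1804517}, and for the cases treated in Section~\ref{generalized Weyl algebra} it can alternatively be read off from the explicit faithful representations constructed there, which make the linear independence of $\{h^k y^n\}_{k\geq 0}$ and $\{h^k x^n\}_{k\geq 0}$ inside each $A_n$ transparent. So the only step that needs real input about generalized Weyl algebras is this freeness over $\CC[h]$ (the spanning being formal, and immediate from Lemma~\ref{GWA-grading}), and it can be bypassed entirely if one only wants the assertion that $A$ has a countable basis.
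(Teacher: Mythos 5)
Your proof is correct and follows essentially the same route as the paper: the paper simply asserts that $h^n$, $h^n y^m$, $h^n x^m$ ($n\in\NN$, $m\geq 1$) form a basis, implicitly relying on Lemma~\ref{GWA-grading} exactly as you do. Your additional observation — that for the literal statement one only needs this set to span, with linear independence (Bavula's freeness over $\CC[h]$) being the genuinely nontrivial part that the paper leaves unjustified — is a fair and slightly more careful reading, but the underlying argument is the same.
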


\begin{proof}
A basis is given by the elements $h^{n}$, $h^{n}y^{m}$ and $h^{n}x^{m}$ for $n\in \NN$, $m\geq 1$.
\end{proof}

There are several ways of writing the same generalized Weyl algebra. The conjugation of $\sigma$ by an automorphism $\tau$ of $D$ gives rise to an isomorphism
 of generalized Weyl algebras.

\begin{lem}\label{conjugate_automorphism}
Let $\sigma$, $\tau$ be automorphisms of $D$ and let $a$ be a central element of $D$. Then $\tau(a)$ is central in $D$ and
$$D(\sigma, a)\cong D(\tau\sigma\tau^{-1}, \tau(a)).$$
\end{lem}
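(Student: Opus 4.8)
The plan is to exhibit an explicit isomorphism $\Phi\colon D(\sigma,a)\to D(\tau\sigma\tau^{-1},\tau(a))$ by sending the canonical generators of the first algebra to suitable elements of the second. First I would record the trivial observation that $\tau(a)$ is central: for any $d'\in D$ write $d'=\tau(d)$, and then $\tau(a)d'=\tau(a)\tau(d)=\tau(ad)=\tau(da)=\tau(d)\tau(a)=d'\tau(a)$, using that $\tau$ is surjective and $a$ is central. So $D(\tau\sigma\tau^{-1},\tau(a))$ is a well-defined generalized Weyl algebra.

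Next, let $x,y$ denote the generators of $A:=D(\sigma,a)$ and $x',y'$ the generators of $A':=D(\tau\sigma\tau^{-1},\tau(a))$, with $D\subseteq A$ and $D\subseteq A'$ in the canonical way. Define $\Phi$ on generators by $\Phi(d)=\tau(d)$ for $d\in D$, $\Phi(x)=x'$ and $\Phi(y)=y'$. To see this extends to an algebra homomorphism, I would check that the images satisfy the four defining relations in~\eqref{relations_GWA}. For the relation $xd=\sigma(d)x$ we must verify $x'\tau(d)=\tau(\sigma(d))x'$ in $A'$; but the commutation relation in $A'$ reads $x'd'=(\tau\sigma\tau^{-1})(d')x'$ for all $d'\in D$, and setting $d'=\tau(d)$ gives $x'\tau(d)=(\tau\sigma\tau^{-1})(\tau(d))x'=\tau(\sigma(d))x'$, as needed. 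The relation $yd=\sigma^{-1}(d)y$ is handled identically using $(\tau\sigma\tau^{-1})^{-1}=\tau\sigma^{-1}\tau^{-1}$. For $yx=a$ we need $y'x'=\tau(a)$ in $A'$, which is exactly the defining relation of $A'$ since the central element there is $\tau(a)$; similarly $x'y'=(\tau\sigma\tau^{-1})(\tau(a))=\tau(\sigma(a))=\Phi(\sigma(a))$, matching $xy=\sigma(a)$. Hence $\Phi$ is a well-defined algebra homomorphism.

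Finally, to see $\Phi$ is an isomorphism I would construct its inverse directly: by the same argument applied to $\tau^{-1}$ and the automorphism $\tau\sigma\tau^{-1}$, we get a homomorphism $\Psi\colon A'\to D\big((\tau^{-1})(\tau\sigma\tau^{-1})(\tau^{-1})^{-1},\,\tau^{-1}(\tau(a))\big)=D(\sigma,a)=A$ sending $d'\mapsto\tau^{-1}(d')$, $x'\mapsto x$, $y'\mapsto y$. On generators $\Psi\circ\Phi$ is the identity of $A$ and $\Phi\circ\Psi$ is the identity of $A'$, so both are identity maps, and $\Phi$ is an isomorphism. There is no serious obstacle here; the only point requiring mild care is bookkeeping with which automorphism governs which algebra when verifying the relations, so that the conjugation $\tau\sigma\tau^{-1}$ lands correctly — in particular making sure to substitute $d'=\tau(d)$ consistently so that the twists on $D$ match up. One should also note, as in Lemma~\ref{GWA-grading}, that $\Phi$ respects the $\ZZ$-gradings since it preserves the degrees of $x,y$ and of elements of $D$, which will be convenient for later applications.
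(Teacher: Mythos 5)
Your proposal is correct and follows essentially the same route as the paper: define $\Phi$ on generators by $d\mapsto\tau(d)$, $x\mapsto x'$, $y\mapsto y'$, verify the four defining relations by conjugating through $\tau$, and invert by the symmetric map built from $\tau^{-1}$. Your write-up is in fact slightly more careful than the paper's on two small points — you check centrality of $\tau(a)$ explicitly and you match $yx=a$ with $y'x'=\tau(a)$ (and $xy=\sigma(a)$ with $x'y'=\tau(\sigma(a))$) in the orientation consistent with Definition~\ref{GWA}.
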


\begin{proof}
Let $x'$ and $y'$ be the generators of $D(\tau\sigma\tau^{-1}, \tau(a))$ over $D$. There is a morphism $\phi: D(\sigma, a)\to D(\tau\sigma\tau^{-1}, \tau(a))$
defined by $x\mapsto x'$, $y\mapsto y'$, $d\mapsto \tau(d)$, for all $d\in D$. We need to check that $\phi$ is compatible with the relations
(\ref{relations_GWA}). Using the relations defining $D(\tau\sigma\tau^{-1}, \tau(a))$ we have
\begin{eqnarray*}
x'\tau(d) &=& (\tau\sigma\tau^{-1})(\tau(d))x'=\tau(\sigma(d))x' \\
y'\tau(d) &=& (\tau\sigma^{-1}\tau)(\tau(d))y'=\tau(\sigma^{-1}(d))y'\\
x'y'&=&\tau(a)\\
y'x'&=& (\tau\sigma\tau^{-1})(\tau(a))=\tau(\sigma(a)).
\end{eqnarray*}
$\phi^{-1}$ is defined by $x'\mapsto x$, $y'\mapsto y$, $d\mapsto \tau^{-1}(d)$ for all $d\in D$.
\end{proof}

In the case $D=\CC[h]$, we use Lemma \ref{conjugate_automorphism} to write a given generalized Weyl algebra in a canonical form. Any automorphism of $\CC[h]$,
 is of the form $\sigma(h)=qh+h_0$ with $q, h_0\in \CC$ and $q\neq 0$. We have three cases
\begin{enumerate}
\item $\sigma$ is conjugate to $id$ if and only if $\sigma=id$,
\item if $q=1$ and $h_0\neq 0$, then $\sigma$ is conjugate to $h\mapsto h-1$,
\item if $q\neq 1$, then $\sigma$ is conjugate to $h\mapsto qh$.
\end{enumerate}

Combining this with Lemma \ref{conjugate_automorphism}, we obtain the following result.

\begin{prop}[Compare with Proposition 2.1.1 in \cite{MR2235811}.]\label{classical_and_quantum}
Let $A=\CC[h](\sigma, P)$, with $P\in \CC[h]$ and $\sigma(h)=qh+h_0$ with $q, h_0\in\CC$ and $q\neq 0$. The following facts hold:
\begin{enumerate}
\item If $\sigma=id$, then $A\cong \CC[h, x, y]/(yx-P)$.
\item If $q=1$ and $h_0\neq 0$ then $A\cong \CC[h](\sigma_1, P_1)$ with $\sigma_1(h)=h-1$ and $P_1(h)=P(-h_0h)$.
\item If $q\neq 1$ then $A\cong \CC[h](\sigma_1, P_1)$ with $\sigma_1(h)=qh$ and $P_1(h)=P(h+\frac{h_0}{1-q})$.\qed
\end{enumerate}
\end{prop}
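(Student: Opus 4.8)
The plan is to prove each of the three parts by exhibiting the required isomorphism explicitly, relying on Lemma~\ref{conjugate_automorphism} after reducing $\sigma$ to one of the three canonical forms listed just above the statement. For part~(1), if $\sigma=\mathrm{id}$ then the relations $xd=dx$ and $yd=dy$ say that $x,y$ commute with all of $\CC[h]$, and the relations $yx=P$, $xy=\sigma(P)=P$ coincide; so the defining relations of $\CC[h](\mathrm{id},P)$ are exactly that $h,x,y$ pairwise commute together with $xy=yx=P$. Hence the obvious map $\CC[h,x,y]/(yx-P)\to A$ is an isomorphism. One should check that no collapse occurs, i.e. that $\{h^n, h^n y^m, h^n x^m\}$ really is a basis, which follows from Lemma~\ref{GWA-grading} (with $D=\CC[h]$): in degree $0$ we get $\CC[h]$, in degree $m>0$ we get $\CC[h]y^m$, in degree $m<0$ we get $\CC[h]x^{|m|}$, matching the monomial basis of $\CC[h,x,y]/(yx-P)$.

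For parts~(2) and~(3) the strategy is the same: apply Lemma~\ref{conjugate_automorphism} with a carefully chosen $\tau\in\mathrm{Aut}(\CC[h])$ conjugating $\sigma$ to $\sigma_1$, and track what happens to the central element $P$. In part~(2), $\sigma(h)=h+h_0$ with $h_0\neq 0$; take $\tau(h)=-h_0 h$ (or its inverse, depending on the convention of Lemma~\ref{conjugate_automorphism}), so that $\tau\sigma\tau^{-1}(h)=h-1$. Then Lemma~\ref{conjugate_automorphism} gives $\CC[h](\sigma,P)\cong\CC[h](\tau\sigma\tau^{-1},\tau(P))$, and $\tau(P)(h)=P(-h_0 h)=P_1(h)$, which is exactly the claim. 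In part~(3), $\sigma(h)=qh+h_0$ with $q\neq 1$; the unique fixed point of $\sigma$ is $h=\frac{h_0}{1-q}$, so take $\tau(h)=h+\frac{h_0}{1-q}$ (the translation sending $0$ to the fixed point). A direct computation gives $\tau^{-1}\sigma\tau(h)=qh$, hence $\tau\sigma\tau^{-1}$ (after possibly swapping $\tau\leftrightarrow\tau^{-1}$ to match the exact form of Lemma~\ref{conjugate_automorphism}) is $h\mapsto qh$, and the new polynomial is $P(h+\frac{h_0}{1-q})=P_1(h)$.

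The only genuinely delicate point is bookkeeping: Lemma~\ref{conjugate_automorphism} is stated as $D(\sigma,a)\cong D(\tau\sigma\tau^{-1},\tau(a))$, so one must be careful which direction of conjugation and which of $\tau$, $\tau^{-1}$ produces the stated normal forms $\sigma_1$ and $P_1$; a sign or an inverse in the wrong place would give $P(-h_0^{-1}h)$ or $P(h-\frac{h_0}{1-q})$ instead. I would verify each case by computing $\tau\sigma\tau^{-1}$ on the generator $h$ explicitly and then reading off $\tau(P)$. A secondary point worth a sentence is that in all three cases $\tau$ is a genuine automorphism of $\CC[h]$ (an affine substitution $h\mapsto \alpha h+\beta$ with $\alpha\neq 0$), so Lemma~\ref{conjugate_automorphism} applies; and since $\CC[h]$ is commutative, centrality of $P$ and of $\tau(P)$ is automatic. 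No other obstacle is expected: once the three normal forms for $\sigma$ are in hand (which the text supplies), the proposition is a formal consequence of Lemma~\ref{conjugate_automorphism} together with the trivial identification in the $\sigma=\mathrm{id}$ case.
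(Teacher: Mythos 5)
Your proposal is correct and follows exactly the paper's proof: part (1) is the immediate identification, and parts (2) and (3) apply Lemma~\ref{conjugate_automorphism} with the same choices $\tau(h)=-h_0h$ and $\tau(h)=h+\frac{h_0}{1-q}$, respectively. The bookkeeping you flag does work out in the direction the lemma is stated (one checks directly that $\tau\sigma\tau^{-1}(h)=h-1$, resp.\ $qh$, and $\tau(P)=P_1$), so no swap of $\tau$ and $\tau^{-1}$ is needed.
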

\begin{proof}
(1) is straightforward, for~(2) and~(3) use Lemma~\ref{conjugate_automorphism} with $\tau(h)=-h_0h$ and $\tau(h)=h+\frac{h_0}{1-q}$, respectively.
\end{proof}

By Proposition \ref{classical_and_quantum}, we can assume that $\sigma=id$, $\sigma(h)=h-1$ or $\sigma(h)=qh$ for some $q\neq 0$.

\begin{prop}\label{canonical_GWA}
Let $A=\CC[h](\sigma, P)$, with $P\in \CC[h]$. The following facts hold:
\begin{enumerate}
\item If $\sigma(h)=h-1$ and $P$ is a non constant polynomial, then $A\cong \CC[h](\sigma, P_1)$ with $P_1(0)=0$.
\item If $\sigma(h)=qh$ and $P$ has a nonzero root, then $A\cong \CC[h](\sigma, P_1)$ with $P_1(1)=0$.
\end{enumerate}
\end{prop}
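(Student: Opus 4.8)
The plan is to apply Lemma~\ref{conjugate_automorphism} with a carefully chosen automorphism $\tau$ of $\CC[h]$ that commutes with $\sigma$, so that $\tau\sigma\tau^{-1}=\sigma$ and hence $\CC[h](\sigma, P)\cong\CC[h](\sigma, \tau(P))$. The automorphism $\tau$ is to be chosen so that $\tau(P)$ vanishes at the desired point: at $0$ in case (1) and at $1$ in case (2). Recall that $\tau(P)$ here means the image of the polynomial $P(h)$ under the ring homomorphism $\tau$, i.e. $\tau(P) = P(\tau(h))$.

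For part (1), with $\sigma(h)=h-1$, I would take $\tau$ to be the translation $\tau(h)=h+c$ for a suitable $c\in\CC$. A direct computation gives $\tau\sigma\tau^{-1}(h)=\tau\sigma(h-c)=\tau(h-c-1)=h-1=\sigma(h)$, so $\tau\sigma\tau^{-1}=\sigma$. Since $P$ is a nonconstant polynomial and $\CC$ is algebraically closed, $P$ has a root $r$; choosing $c=r$, the polynomial $P_1:=\tau(P)=P(h+r)$ satisfies $P_1(0)=P(r)=0$, and Lemma~\ref{conjugate_automorphism} yields $\CC[h](\sigma, P)\cong\CC[h](\sigma, P_1)$.

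For part (2), with $\sigma(h)=qh$, I would take $\tau$ to be the scaling $\tau(h)=\lambda h$ for a suitable $\lambda\in\CC^{\times}$. Then $\tau\sigma\tau^{-1}(h)=\tau\sigma(\lambda^{-1}h)=\tau(q\lambda^{-1}h)=qh=\sigma(h)$, so again $\tau\sigma\tau^{-1}=\sigma$. By hypothesis $P$ has a nonzero root $r$; choosing $\lambda=r$, the polynomial $P_1:=\tau(P)=P(\lambda h)$ satisfies $P_1(1)=P(\lambda)=P(r)=0$, and Lemma~\ref{conjugate_automorphism} again gives $\CC[h](\sigma, P)\cong\CC[h](\sigma, P_1)$.

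There is essentially no serious obstacle here: the only points to verify are that the chosen $\tau$ commutes with $\sigma$ (a one-line computation in each case) and that the relevant root exists, which is guaranteed by algebraic closedness of $\CC$ in case (1) and by the explicit hypothesis in case (2). Centrality of $P_1$ in $\CC[h]$ is automatic since $\CC[h]$ is commutative, and is in any case part of the conclusion of Lemma~\ref{conjugate_automorphism}.
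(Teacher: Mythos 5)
Your proposal is correct and follows the paper's own argument: both apply Lemma~\ref{conjugate_automorphism} with a translation $\tau$ commuting with $\sigma$ in case (1) and a scaling in case (2), chosen to move a root of $P$ to the required point. Your explicit verification that $\tau\sigma\tau^{-1}=\sigma$ and your sign convention ($\tau(h)=h+r$ so that $P_1(0)=P(r)=0$) are exactly the details the paper leaves implicit.
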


\begin{proof}
Follows from Lemma \ref{conjugate_automorphism}. In~(1) we set $\tau(h)=h-\lambda$ for any root $\lambda$ of $P$, and in~(2) we set
$\tau(h)=\lambda h$, where $\lambda$ is a non zero root of $P$.
\end{proof}

Note that $P_1$ in Proposition~\ref{classical_and_quantum} has a non zero root if and only if $P$ has a root different from $\frac{h_0}{1-q}$.

It is worth mentioning that generalized Weyl algebras over $\CC[h]$ have been classified up to isomorphism (see \cite{MR1804517}and \cite{MR2235811}).

To finish this section, we construct faithful representations for the generalized Weyl algebras covered in cases $(1)$ and $(2)$ of Proposition \ref{canonical_GWA}. We define
$V_{\NN}$ as the vector space of sequences of complex numbers indexed by $\NN$. Let $\mcU_1, \mcU_{-1}\in End(V_{\NN})$ be the shift to the right and the
shift to the left respectively. Note that $\mcU_{-1}\mcU_{1}=1$, $\mcU_{1}\mcU_{-1}=1-e_{00}$.
$$
\mcU_{1}=
\begin{bmatrix}
0 & 0 & 0 &\cdots \\
1 & 0 & 0 &  \\
0 & 1 & 0 &   \\
0 & 0 & 1 &  \\
\vdots & & & \ddots
\end{bmatrix}\qquad
\mcU_{-1}=
\begin{bmatrix}
0 & 1 & 0 & 0 &  \cdots \\
0 & 0 & 1 &  0 &  \\
0 & 0 & 0 &  1 &   \\
\vdots &  & & & \ddots
\end{bmatrix}
$$

Additionally, we use the following elements $N=\sum_{i\in\NN} (-i)e_{i,i}$ and $G=\sum_{i\in\NN} q^ie_{i,i}$ for $q\neq 0$ not a root of unity.
$$
N=
\begin{bmatrix}
0 & 0 & 0 & 0 & \cdots \\
0 & -1 & 0 &  0 &  \\
0 & 0 & -2 & 0 &  \\
0 & 0 & 0 & -3 \\
\vdots & & & & \ddots
\end{bmatrix}\qquad
G=
\begin{bmatrix}
1 & 0 & 0 & 0 & \cdots \\
0 & q & 0 &  0 &  \\
0 & 0 & q^2 & 0 &  \\
0 & 0 & 0 & q^3 \\
\vdots & & & & \ddots
\end{bmatrix}
$$

\begin{lem}\label{relations-N-and-G}
The following relations are satisfied in $End(V_{\NN})$.
\begin{enumerate}
\item $\mcU_1N=(N+1)\mcU_1$,
\item $\mcU_{-1}N=(N-1)\mcU_{-1}$,
\item $\mcU_1G=(q^{-1}G)\mcU_1$,
\item $\mcU_{-1}G=(qG)\mcU_{-1}$. \qed
\end{enumerate}
\end{lem}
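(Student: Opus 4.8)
The plan is to verify each of the four identities by a direct computation on the standard basis $\{e_i\}_{i\in\NN}$ of $V_{\NN}$ (the sequence with a single entry $1$ in slot $i$ and $0$ elsewhere). Since all the operators involved are linear, it suffices to check the relations after applying both sides to each $e_i$. From the matrix descriptions one records the elementary facts $\mcU_1 e_i = e_{i+1}$ and $\mcU_{-1} e_i = e_{i-1}$ (with the convention $e_{-1}=0$, so that $\mcU_{-1} e_0 = 0$), together with $N e_i = -i\, e_i$ and $G e_i = q^i e_i$.

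With these in hand each relation reduces to a one-line scalar check. For (1), both $\mcU_1 N$ and $(N+1)\mcU_1$ send $e_i$ to $-i\, e_{i+1}$, since $(-(i+1)+1) = -i$; for (3), both $\mcU_1 G$ and $(q^{-1}G)\mcU_1$ send $e_i$ to $q^i e_{i+1}$, since $q^{-1} q^{i+1} = q^i$. Relations (2) and (4) are the mirror-image computations with $\mcU_{-1}$ in place of $\mcU_1$: $\mcU_{-1} N$ and $(N-1)\mcU_{-1}$ both send $e_i$ to $-i\, e_{i-1}$, and $\mcU_{-1} G$ and $(qG)\mcU_{-1}$ both send $e_i$ to $q^i e_{i-1}$. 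Equivalently, one can phrase the argument matrix-theoretically: left-multiplying a diagonal operator by the shift $\mcU_1$ (or right-multiplying by $\mcU_{-1}$) shifts its diagonal, so that $\mcU_1 \Delta = \Delta' \mcU_1$ whenever $\Delta'$ is obtained from the diagonal matrix $\Delta$ by dropping its top entry and shifting; applying this to $\Delta = N$ and $\Delta = G$ yields (1) and (3), and the $\mcU_{-1}$-versions follow by the symmetric bookkeeping.

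The computation is entirely routine, and I do not expect any genuine obstacle. The only point that needs a flicker of attention is the index $0$, where $\mcU_{-1}$ annihilates $e_0$: in relations (2) and (4) this boundary term introduces no discrepancy, since in (2) the relevant scalar is $-0 = 0$ and in (4) both sides already vanish on $e_0$, so no separate case distinction is required.
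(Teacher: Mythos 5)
Your verification is correct, and since the paper states this lemma with no proof at all (it is marked as immediate), your direct computation is exactly the routine check the authors intend. One pedantic remark: $V_{\NN}$ is the space of \emph{all} sequences, so $\{e_i\}$ is not a basis of it and linearity alone does not reduce the check to the $e_i$; however, the matrix-theoretic reformulation you give (a shift conjugates a diagonal operator to the shifted diagonal, verified entrywise) closes this gap, so the argument stands.
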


As a consequence of Lemma \ref{relations-N-and-G}, we obtain that the subalgebras $\mcE_1$ and $\mcE_2$ of $End(V_{\NN})$ generated by $\{\mcU_{1}, \mcU_{-1}, N\}$ and $\{\mcU_{1}, \mcU_{-1}, G\}$, respectively, have countable basis over $\CC$ and therefore they are locally convex algebras with the fine topology.

\begin{lem}\label{representation}
We have the following representations for generalized Weyl algebras $A=\CC[h](\sigma, P(h))$ in the following cases.
\begin{enumerate}
\item If $\sigma(h)=h-1$ and $P$ is a nonzero polynomial with $P(0)=0$, then there is a faithful representation $\rho: A \to \mcE_1$ such that
$$\rho(h)=N,\  \rho(x)=\mathcal{U}_{-1} \text{ and }\rho(y)=P(N)\mcU_1=\mcU_1P(N-1)$$
\item If $\sigma(h)=qh$ with $q\neq 0$ not a root of unity and $P(1)=0$, then there is a faithful representation $\rho: A \to \mcE_2$ such that
$$\rho(h)=G, \ \rho(x)=\mathcal{U}_{-1} \text{ and }\rho(y)=P(G)\mcU_1=\mcU_1P(qG)$$
\end{enumerate}
\end{lem}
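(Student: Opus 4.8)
The plan is to verify in each case that the proposed assignments $\rho(h)$, $\rho(x)$, $\rho(y)$ define an algebra homomorphism by checking the defining relations~\eqref{relations_GWA}, and then to prove faithfulness by a degree/support argument using the $\ZZ$-grading of Lemma~\ref{GWA-grading}. First I would record the two identities $\mcU_1 P(N-1)=P(N)\mcU_1$ and $\mcU_1 P(qG)=P(G)\mcU_1$, which follow immediately from parts~(1) and~(3) of Lemma~\ref{relations-N-and-G} applied monomial by monomial to $P$; this shows the two expressions given for $\rho(y)$ agree, so $\rho(y)$ is well defined. Then, writing $\sigma$ for $h\mapsto h-1$ in case~(1) (resp. $h\mapsto qh$ in case~(2)), the relation $xh=\sigma(h)x$ becomes $\mcU_{-1}N=(N-1)\mcU_{-1}$ (resp. $\mcU_{-1}G=(qG)\mcU_{-1}$), which is part~(2) (resp. part~(4)) of Lemma~\ref{relations-N-and-G}; the relation $yh=\sigma^{-1}(h)y$ reduces similarly to parts~(1) and~(3). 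For the relations $yx=P(h)$ and $xy=\sigma(P(h))=P(\sigma(h))$, I would compute $\rho(y)\rho(x)=P(N)\mcU_1\mcU_{-1}=P(N)(1-e_{00})$ and $\rho(x)\rho(y)=\mcU_{-1}\mcU_1 P(N-1)=P(N-1)$; since $P(0)=0$ in case~(1) we have $P(N)e_{00}=P(0)e_{00}=0$, so $\rho(y)\rho(x)=P(N)=\rho(P(h))$, and $P(N-1)=P(\sigma(N))=\rho(P(\sigma(h)))$ as required. In case~(2) the same computation uses $\mcU_1\mcU_{-1}=1-e_{00}$ together with $P(1)=0$, so that $P(G)e_{00}=P(1)e_{00}=0$, and $\rho(x)\rho(y)=P(qG)=\rho(P(\sigma(h)))$. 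This establishes that $\rho$ is a homomorphism into $\mcE_1$ (resp. $\mcE_2$).

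For faithfulness, I would use the $\ZZ$-grading $A=\bigoplus_{n\in\ZZ}A_n$ of Lemma~\ref{GWA-grading}, under which $A_0=\CC[h]$, $A_n=\CC[h]y^n$ for $n>0$ and $A_n=\CC[h]x^{|n|}$ for $n<0$. The representation $\rho$ is graded for the grading on $End(V_{\NN})$ in which $e_{ij}$ has degree $j-i$: indeed $\rho(\CC[h])$ lies in degree $0$, $\rho(x)=\mcU_{-1}$ in degree $1$ shifted appropriately, and $\rho(y)$ in the opposite degree. Hence it suffices to show that $\rho$ is injective on each homogeneous component, and by symmetry between $x$ and $y$ it is enough to treat a nonzero element $f(h)y^n$ with $n\geq 0$ (the $n<0$ case being analogous). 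One computes $\rho(f(h)y^n)=f(N)(P(N)P(N-1)\cdots)\mcU_1^n$ or, more precisely, $\rho(y^n)=Q_n(N)\mcU_1^n$ where $Q_n(t)=\prod_{k=0}^{n-1}P(\sigma^k(t))$ using parts~(1) and~(3) of Lemma~\ref{relations-N-and-G} to move all factors $P$ past the shifts. So $\rho(f(h)y^n)=f(N)Q_n(N)\mcU_1^n$, which acts on the standard basis vector $\delta_j$ by sending it to $f(-j-n)Q_n(-j-n)\delta_{j+n}$ in case~(1) (with the analogous scalar $f(q^{j+n})Q_n(q^{j+n})$ in case~(2)).

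The main obstacle is arguing that this operator is nonzero whenever $f\neq 0$. For case~(1) the eigenvalues $-j$, $j\in\NN$, are infinitely many distinct complex numbers, and $Q_n$ is a fixed nonzero polynomial (here one needs $P\neq 0$, which is part of the hypothesis; note $Q_n$ has only finitely many roots), so $fQ_n$ is a nonzero polynomial and hence $f(-j)Q_n(-j)\neq 0$ for all but finitely many $j$, giving a nonzero matrix coefficient. For case~(2), since $q$ is not a root of unity the eigenvalues $q^j$, $j\in\NN$, are again infinitely many distinct values, and the same argument applies: $fQ_n$ is a nonzero polynomial with finitely many roots among the $q^j$, so some coefficient is nonzero. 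In both cases the conclusion is that $\rho(f(h)y^n)\neq 0$, so $\rho$ is injective on $A_n$ for $n\geq 0$; the argument for $A_n$, $n<0$, using $\rho(x)=\mcU_{-1}$ and $\rho(x^m)=R_m(N)\mcU_{-1}^m$ with $R_m(t)=\prod_{k=1}^{m}P(\sigma^{-k}(t))$ (nonzero since $P\neq 0$), is entirely parallel. Combining the homogeneous components, $\rho$ is faithful, and its image lies in $\mcE_1$ (resp. $\mcE_2$) by construction since $N$ (resp. $G$) and $\mcU_{\pm1}$ generate those subalgebras. This completes the proof.
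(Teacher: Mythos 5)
Your proposal is correct and follows essentially the same route as the paper: the relations are verified via Lemma~\ref{relations-N-and-G} (with $P(0)=0$, resp.\ $P(1)=0$, absorbing the $e_{00}$ defect in $\mcU_1\mcU_{-1}=1-e_{00}$), and faithfulness is obtained from the $\ZZ$-grading together with the facts that $Q_n\neq 0$ as a polynomial and that the diagonal entries of $N$ (resp.\ $G$) are pairwise distinct. The only slips are cosmetic and harmless: your $Q_n(t)$ should be $\prod_{k=0}^{n-1}P(\sigma^{-k}(t))$ rather than $\prod_{k=0}^{n-1}P(\sigma^{k}(t))$, and $\rho(x^m)=\mcU_{-1}^m$ carries no polynomial factor $R_m$ at all (since $\rho(x)=\mcU_{-1}$), which only makes the negative-degree case easier.
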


\begin{proof}
For $(1)$, first we notice that we have an injective homomorphism $\CC[h]\hookrightarrow End(V_{\NN})$ defined by $h\mapsto N$.

This homomorphism is injective because all the entries in the diagonal of matrix $N$ are different. With $P(0)=0$ we will see that the relations of
$\CC[h](\sigma, P(h))$ hold. To prove this, we use the relations of Lemma \ref{relations-N-and-G}. For a polynomial $\alpha(h)\in\CC[h]$ we have
\begin{eqnarray*}
\rho(x\alpha(h)) &=& \mcU_{-1}\alpha(N) = \alpha(N-1)\mcU_{-1} = \rho(\alpha(h-1)x) \\
\rho(y\alpha(h)) &=& P(N)\mcU_{1}\alpha(N)=\alpha(N+1)P(N)\mcU_{1}=\rho(\alpha(h+1)y)\\
\rho(yx) &=& \mcU_{1}P(N-1)\mcU_{-1}=\mcU_{1}\mcU_{-1}P(N)=(1-e_{00})P(N)=P(N)=\rho(P(h))\\
\rho(xy) &=& \mcU_{-1}\mcU_{1}P(N-1)=P(N-1)=\rho(P(h-1))
\end{eqnarray*}
We use that $P(0)=0$ in the third row to guarantee $(1-e_{00})P(N)=P(N)$.

Now we prove that $\rho$ is injective. Let
$$\alpha=\sum_{n\geq 0}p_n(h)y^n+\sum_{m<0}q_m(h)x^m$$
be an element of $A$. Then we have
$$\rho(\alpha)=\sum_{n\geq 0}p_n(P(N))(P(N)\mcU_1)^n+\sum_{m<0}q_m(P(N))\mcU_{-1}^m.$$
Note that $(P(N)\mcU_1)^{n}=Q_n(N)\mcU_1^n$ where $Q_n(N)=P(N)P(N+1)\dots P(N+(n-1))$. Therefore if $\rho(\alpha)=0$ then $q_m=0$ and because $Q_n\neq 0$,
we have $p_n=0$. Therefore $\alpha=0$ and so $\rho$ is injective.

$(2)$ is proved in a similar way: we have an injective homomorphism $\CC[h]\hookrightarrow End(V_{\NN})$ defined by $h\mapsto G$. This homomorphism is
injective because $q\neq 0$ not a root of unity imply that all the entries in the diagonal of matrix $G$ are different. Using $P(1)=0$, it is easy to see
that the relations of $D(\sigma, a)$ hold. We also need to use the relations of Lemma \ref{relations-N-and-G}. We prove that $\rho$ is injective in a
similar way. In this case we note that $(P(G)\mcU_1)^{n}=Q_n(G)\mcU_1^n$ where $Q_n(G)=P(G)P(q^{-1}G)\dots P(q^{-(n-1)}G)$.

\end{proof}


\section{$kk^{alg}$ invariants of generalized Weyl algebras}
\label{computations}
In this section, we compute the isomorphism class  in $\mathfrak{KK}^{alg}$ of generalized Weyl algebras $A=\CC[h](\sigma, P)$ where $\sigma(h)=qh+h_0$ is an
automorphism of $\CC[h]$ and $P\in \CC[h]$. We summarize our results in the table below.

\begin{table}[htb]
\begin{tabular}{| p{3.1cm} |  l |  l | p{1.8cm} | l |}
\hline
\multicolumn{2}{|c|}{Conditions} &  \multicolumn{2}{|c|}{Results}  &  Observation \\ \hline
\multirow{2}{*}{$P$ is constant} &  $P=0$  & $A\cong_{\mathfrak{KK}^{alg}}\CC$ & Prop  \ref{case1}& $A$ $\NN$-graded\\ \cline{2-5}
  & $P\neq 0$ & $A\cong_{\mathfrak{KK}^{alg}}S\CC \oplus \CC$ & Prop \ref{invariants-exceptional-case-1}& $A$ tame smooth\\ \hline
\multirow{3}{3.1cm}{$P$ is nonconstant with $r$ distinct roots}
 & $q$ not a root of unity & $A\cong_{\mathfrak{KK}^{alg}}\CC^{r}$ & Thm \ref{main_result} Prop \ref{case4} &\\ \cline{2-5}
 & $q=1$ and $h_0\neq 0$ & $A\cong_{\mathfrak{KK}^{alg}}\CC^{r}$ & Thm \ref{main_result} &\\ \hline
\end{tabular}
\end{table}

In Section \ref{toeplitz_SGCP}, we recall the definition of $\mcT_B$ from \cite{MR3054304}. Generalized Weyl algebras $A=\CC[h](\sigma, P)$ are smooth
generalized crossed products and in Proposition~\ref{Toeplitzext} we construct a linearly split extension
$$0\to \Lambda_A\to \mcT_A\to A \to 0.$$
In the case where $P$ is a non constant polynomial, $A$ is a generalized crossed product that is not tame smooth so we cannot apply the results of
\cite{MR3054304} directly.  In this case we follow the methods of \cite{MR2240217} and \cite{MR3054304} to obtain
$$
\Lambda_A\cong_{\mathfrak{KK}^{alg}} A_1A_{-1}\quad\text{(Theorem~\ref{moritaker} )}\qquad \text{and}
\quad \mcT_A\cong_{\mathfrak{KK}^{alg}} A_0\quad\text{(Theorem~\ref{toep_grado0} )}
$$
 in the cases where $P$ is non constant and
\begin{itemize}
\item $q=1$ and $h_0\neq 0$ or
\item $q$ is not a root of unity and $P$ has a root different from $\frac{h_0}{1-q}$.
\end{itemize}
With these isomorphisms we construct in Theorem~\ref{exact_triangle} an exact triangle
$$
SA \to A_1A_{-1}\overset{0}\to A_0\to A
$$
in the triangulated category $(\mathfrak{KK}^{alg}, S)$ (see Proposition \ref{triangulated}). This implies  $$A=A_0\oplus S(A_1A_{-1}).$$

In Proposition~\ref{invariants_A1A-1}, we prove that $A_1A_{-1}\cong_{\mathfrak{KK}^{alg}} S\CC^{r-1}$, and since by Lemma~\ref{N-graded}
we know that $A_0\cong_{\mathfrak{KK}^{alg}}\CC$, we obtain our main result Theorem~\ref{main_result}: in these cases $A\cong_{\mathfrak{KK}^{alg}} \CC^{r}$.

We also determine the ${\mathfrak{KK}^{alg}}$-class of $A$ when $A$ is $\NN$-graded. In this case Lemma \ref{N-graded} gives us
$A\cong_{\mathfrak{KK}^{alg}} A_0$. This is the case when
\begin{itemize}
\item $P$ is nonconstant, $q$ is not a root of unity and $P$ has only $\frac{h_0}{1-q}$ as a root or
\item $P=0$.
\end{itemize}
In both cases we obtain $A\cong_{\mathfrak{KK}^{alg}}\CC$ in Propositions~\ref{case1} and~\ref{case4}.

If $P$ is a nonzero constant polynomial, $A$ is a tame smooth generalized crossed product and the results from \cite{MR3054304} apply. In this case
we obtain $A\cong_{\mathfrak{KK}^{alg}}  S\CC\oplus\CC$ in Proposition~\ref{invariants-exceptional-case-1}.


\subsection{The Toeplitz algebra of a smooth generalized crossed product}\label{toeplitz_SGCP}\

In \cite{MR3054304}, Gabriel and Grensing define smooth generalized crossed products. These are involutive locally convex algebras analog to $\Cstar$-algebra
 generalized crossed products in~\cite{MR1467459}. In the same article~\cite{MR3054304}, sequences analog to the Pimsner-Voiculescu exact sequence are
 constructed for smooth generalized crossed products that are tame smooth (see definition 18 in \cite{MR3054304}).

\begin{defn}
A gauge action $\gamma$ on a locally convex algebra $B$ is a pointwise continuous action of $S^1$ on $B$. An element $b\in B$ is called gauge smooth if the
map $t\mapsto \gamma_t(b)$ is smooth.
\end{defn}

If we have a gauge action on $B$, then
$B_n=\{b\in B| \gamma_t(b)=t^n b,\ \forall t\in S^1\}$ defines a natural $\ZZ$-grading of $B$.

\begin{defn}\label{GCP_definition}
A smooth generalized crossed product is a locally convex algebra $B$ with an involution and a gauge action such that
\begin{itemize}
\item $B_0$ and $B_1$ generate $B$ as a locally convex involutive algebra,
\item all $b$ are gauge smooth and the induced map $B\to C^{\infty}(S^1,B)$ is continuous.
\end{itemize}
\end{defn}

Generalized Weyl algebras $A=\CC[h](\sigma, P)$ are locally convex algebras when given the fine topology. They have an involution defined by $y^*=x$,
$x^*=y$ and $d^*=d$ for all $d\in D$. There is an action of $S^{1}$ defined by $\gamma_t(\omega_n)=t^n\omega_n$ for $\omega_n\in A_n$. With this action,
 generalized Weyl algebras over $\CC[h]$ are smooth generalized crossed products.

\begin{rmk}\label{GWA_not_tame_smooth}
Generalized Weyl algebras $A=\CC[h](\sigma, P)$ are only tame smooth when $P$ is constant (see definition 18 in \cite{MR3054304}). If $P$ is non constant,
 we have $A_1A_{-1}=(P)\subsetneq A_0=\CC[h]$. This implies that $A$ is not tame smooth because tame smooth generalized crossed products $B$ have a frame
 in degree $1$ which implies that $B_1B_{-1}=B_0$.
\end{rmk}

\begin{defn}
Let $B$ be a smooth generalized crossed product. We define $\mcT_B$ to be the closed subalgebra of $\mcT \otimes_{\pi} B$ generated by
$1\otimes B_0, S\otimes B_1$ and $S^{*}\otimes B_{-1}$.
\end{defn}

We tensor the linearly split extension $0\to \mcK\to \mcT\to C^{\infty}(S^{1})\to 0$ with $B$ to obtain
\begin{equation}\label{toeplitz_tensor_B}
0 \to \mcK\otimes_{\pi} B \to \mcT \otimes_{\pi} B \overset{p}\to  C^{\infty}(S^1) \otimes_{\pi} B \to 0.
\end{equation}
which is still a linearly split extension.

\begin{prop}\label{Toeplitzext}
Let $A$ be a generalized Weyl algebra $\CC[h](\sigma, P)$. Then there is a linearly split extension
$$
0 \to \Lambda_A \overset{\iota}\to  \mcT_A \overset{\bar p}\to  A \to 0
$$
where $\Lambda_A$ is the ideal $\bigoplus_{i, j\geq 0}  e_{i, j}\otimes A_{i+1}A_{-(j+1)}$ of $\mcT_A$, $\iota$ is the inclusion of $\Lambda_A$ in $\mcT_A$
and $\bar p$ is the restriction of $p$ to $\mcT_A$.
\end{prop}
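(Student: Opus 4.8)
The plan is to take the linearly split extension \eqref{toeplitz_tensor_B}, namely
$$0\to \mcK\otimes_\pi B\to \mcT\otimes_\pi B\overset{p}\to C^\infty(S^1)\otimes_\pi B\to 0$$
with $B=A$, and restrict everything to the subalgebra $\mcT_A\subseteq\mcT\otimes_\pi A$. The point is that $\mcT_A$ is by definition the closed subalgebra generated by $1\otimes A_0$, $S\otimes A_1$ and $S^*\otimes A_{-1}$, so I first need to identify $\bar p(\mcT_A)$ and $\ker(\bar p|_{\mcT_A})$ explicitly. Since $p(1\otimes a)=1\otimes a$, $p(S\otimes a)=z\otimes a$ and $p(S^*\otimes a)=z^{-1}\otimes a$, the image $\bar p(\mcT_A)$ is the closed subalgebra of $C^\infty(S^1)\otimes_\pi A$ generated by $1\otimes A_0$, $z\otimes A_1$, $z^{-1}\otimes A_{-1}$. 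The relations $z\cdot z^{-1}=1$ together with the grading relations in $A$ show that $z^n\otimes A_n$ appears for every $n\in\ZZ$, and because the map $A\to C^\infty(S^1)\otimes_\pi A$, $\omega_n\mapsto z^n\otimes\omega_n$ (on homogeneous elements, extended by the gauge action / Fourier expansion) is an isomorphism onto this subalgebra — this is the standard fact that the "Fourier transform" of a smooth generalized crossed product lands isomorphically in $C^\infty(S^1)\otimes_\pi B$, used already in \cite{MR3054304} — I obtain $\bar p(\mcT_A)\cong A$ and call this identification $\bar p:\mcT_A\to A$. First I would pin this down carefully, using Lemma~\ref{GWA-grading} to control the graded pieces $A_n=\CC[h]y^n$ or $\CC[h]x^n$ and Lemma~\ref{algtensor} to know that the tensor products involved are purely algebraic (since $A$ has a countable basis, by Corollary~\ref{GWA_countable_basis}).

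Next I would compute $\Lambda_A:=\ker(\bar p)=\mcT_A\cap(\mcK\otimes_\pi A)$. A general element of $\mcT_A$ is a (convergent) sum of products of the generators; using $v_k e_{ij}=e_{(i+k),j}$, $e_{ij}v_k=e_{i,(j-k)}$, $v_kv_{-l}=v_{k-l}(1-e_{00}-\dots-e_{l-1,l-1})$ and $S^*S=1$, every such product collapses to a term of the form $v_m\otimes\omega$ plus a term in $\mcK\otimes A$, and the $\mcK$-part, when one tracks degrees, lands in $\bigoplus_{i,j\ge 0}e_{ij}\otimes A_{i+1}A_{-(j+1)}$. Concretely, $S^*\otimes A_{-1}$ times $S\otimes A_1$ produces $(1-e_{00})\otimes A_{-1}A_1$, whose $\mcK$-part is $-e_{00}\otimes A_{-1}A_1\subseteq e_{00}\otimes A_{-1}A_1$; more generally $S^{*j+1}(\text{deg }0\text{ stuff})S^{i+1}$ terms feed $e_{ij}\otimes A_{i+1}A_{-(j+1)}$ once one commutes the $A$-coefficients past the shifts. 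I would verify that this subspace is exactly a (two-sided) ideal of $\mcT_A$ — it is closed under left/right multiplication by $1\otimes A_0$, by $S\otimes A_1$ (which shifts the first index $i\mapsto i+1$ and multiplies the coefficient, using $A_1A_{i+1}A_{-(j+1)}\subseteq A_{i+2}A_{-(j+1)}$ after moving things through), and by $S^*\otimes A_{-1}$ (which lowers $i$, with the $i=0$ row killed by $e_{(-1),j}=0$) — hence it equals $\ker\bar p$. This bookkeeping with the shift and the grading of $A$ is, I expect, \emph{the main obstacle}: one must be careful that multiplying an $A_n$-coefficient past $S^k$ really changes degree and lands in the claimed products $A_{i+1}A_{-(j+1)}$, and that no terms outside $\bigoplus_{i,j\ge0}e_{ij}\otimes A_{i+1}A_{-(j+1)}$ survive.

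Finally I would produce the continuous linear section. The linearly split extension \eqref{toeplitz_tensor_B} has a continuous linear section $s$ induced by $z\mapsto S$ (so $s(z^n\otimes a)=v_n\otimes a$ for $n\ge 0$ and $s(z^{-n}\otimes a)=v_{-n}\otimes a$ for $n>0$, extended linearly over the Fourier/rapidly-decreasing expansion). Restricting $s$ to $\bar p(\mcT_A)\cong A$: for a homogeneous $\omega_n\in A_n$, $s(z^n\otimes\omega_n)=v_n\otimes\omega_n=S^n\otimes\omega_n$ when $n\ge 0$, which lies in $\mcT_A$ because $S\otimes A_1$ generates it (write $\omega_n=p(h)y^n$ and note $S^n\otimes p(h)y^n=(S\otimes y)\cdots$ up to $1\otimes A_0$ factors), and similarly $S^{*|n|}\otimes\omega_n\in\mcT_A$ for $n<0$. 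Hence $s$ maps $A$ continuously and linearly into $\mcT_A$ and satisfies $\bar p\circ s=\mathrm{id}_A$ by construction. Continuity is automatic from continuity of $s$ on the larger extension together with the fine topology on $A$. This gives the linearly split extension $0\to\Lambda_A\overset{\iota}\to\mcT_A\overset{\bar p}\to A\to 0$ as claimed, completing the proof.
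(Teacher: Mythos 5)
Your proposal follows essentially the same route as the paper: restrict the linearly split extension \eqref{toeplitz_tensor_B} to $\mcT_A$, invoke Corollary~\ref{GWA_countable_basis} and Lemma~\ref{algtensor} to work with algebraic tensor products, identify the image with $A$ via $z^n\otimes a_n\mapsto a_n$, and compute the kernel from the normal form $S^{k+1}S^{*(l+1)}=S^{k-l}(1-e_{00}-\cdots-e_{ll})$ (the paper leaves the restricted linear section implicit, which you make explicit). One small slip in your bookkeeping: it is $(S\otimes a_1)(S^*\otimes a_{-1})=SS^*\otimes a_1a_{-1}=(1-e_{00})\otimes a_1a_{-1}$ that produces the compact correction lying in $e_{00}\otimes A_1A_{-1}$, whereas $(S^*\otimes a_{-1})(S\otimes a_1)=1\otimes a_{-1}a_1$ has no compact part since $S^*S=1$; with the orders transposed accordingly your argument is the paper's.
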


\begin{proof}
By Corollary \ref{GWA_countable_basis}, $A$ has a countable basis over $\CC$. Using Lemma \ref{algtensor}, we conclude that the projective tensor products of
 $(\ref{toeplitz_tensor_B})$ are all algebraic. The image of $\mcT_A$ is generated by $1\otimes A_0$, $z\otimes A_1$ and $z^{-1}\otimes A_{-1}$ and it is
 isomorphic to $A$ via $z^{n}\otimes a_n \mapsto a_n$. The kernel of $\pi$ is the intersection of $\mcK\otimes A$ and $\mcT_A$. The elements of $\mcT_A$ are of
 the form $1\otimes a_0+\sum_{k, l\geq 0} S^{k+1}S^{*l+1}\otimes a_{k+1}a_{-(l+1)}$. Now we note that
$$S^{k+1}S^{l+1}=
\begin{cases}
S^{k-l}(1-e_{0, 0}-\dots -e_{l, l}) &,  k\geq l \\
(1-e_{0, 0}-\dots -e_{k, k})S^{*(l-k)} &,  k<l
\end{cases}
$$
Using the vector space decomposition $\mcT\otimes A=\mcK\otimes A\oplus C^{\infty}(S^1)\otimes A$ we note that an element of the kernel is of the form
$$\sum_{k\geq l} S^{k-l}(-e_{0, 0}-\dots -e_{l, l})\otimes a_{k+1}a_{-(l+1)}+\sum_{k<l}(-e_{0, 0}-\dots -e_{k, k})S^{*(l-k)}\otimes a_{k+1}a_{-(l+1)}$$
which is in $\bigoplus_{i, j\geq 0}  e_{i, j}\otimes A_{i+1}A_{-(j+1)}$.

\end{proof}


\subsection{The case where $P$ is a non constant polynomial}\label{P_non_constant}\

We consider the short exact sequence
$$0 \to \Lambda_A \to \mcT_A \to A \to 0$$
where $\mcT_A$ is the Toeplitz algebra of $A$. This sequence yields an exact triangle
$$S A\to \Lambda_A\to \mcT_A\to A$$
 in the triangulated category $(\mathfrak{KK}^{alg}, S)$.

In order to apply Lemma \ref{representation}, we consider generalized Weyl algebras $A=\CC[h](\sigma, P)$ where $P$ is non constant and
\begin{itemize}
\item $q=1$ and $h_0\neq 0$ or
\item $q$ is not a root of unity and $P$ has a root different from $\frac{h_0}{1-q}$.
\end{itemize}
By Propositions~\ref{classical_and_quantum} and~\ref{canonical_GWA}, in order to cover these cases, it suffices to consider the following two cases:
\begin{itemize}
\item  $\sigma(h)=h-1$ and $P$ is a non constant polynomial with $P(0)=0$.
\item  $\sigma(h)=qh$ where $q$ is not a root of unity and $P$ is a non constant polynomial with $P(1)=0$.
\end{itemize}
In both cases, we construct an exact triangle
\begin{equation*}
SA \to A_1A_{-1}\overset{0}\to A_0 \to A
\end{equation*}
in the triangulated category $(\mathfrak{KK}^{alg}, S)$.

\begin{rmk}
We treat the case where $q$ is not a root of unity and $P$ has only $\frac{h_0}{1-q}$ as a root separately in Proposition~\ref{case4}.
\end{rmk}

We start by characterizing the elements of $A_1A_{-1}$.

\begin{lem}
The elements of $\Lambda_A$ can be written uniquely as sums $\sum e_{i, j}\otimes y^{i+1}P_{i,j}(h)x^{j+1}$
\end{lem}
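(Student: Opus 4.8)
The plan is to reduce the statement to a normal-form question about the homogeneous components of $A$, and then settle it with a short integral-domain argument. Recall that $\Lambda_A=\bigoplus_{i,j\ge 0}e_{i,j}\otimes A_{i+1}A_{-(j+1)}$ sits inside $\mcK\otimes_\pi A$, and by Lemma~\ref{algtensor} together with $\mcK\cong s$ (Lemma~\ref{spaces_isomorphic_to_s}) this projective tensor product coincides with the \emph{algebraic} one, $\mcK\otimes_\pi A=\mcK\otimes A$. Hence the (continuous) coordinate functionals $k\mapsto k_{ij}$ on $\mcK$ induce continuous linear maps $\pi_{ij}\colon\mcK\otimes_\pi A\to A$ with trivial joint kernel, so every $w\in\Lambda_A$ is recovered uniquely from its entries $b_{ij}:=\pi_{ij}(w)\in A_{i+1}A_{-(j+1)}$. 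It therefore suffices to show that each $b\in A_{i+1}A_{-(j+1)}$ can be written \emph{uniquely} as $y^{i+1}P(h)x^{j+1}$ with $P\in\CC[h]$.

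For existence I would use Lemma~\ref{GWA-grading}, which gives $A_{i+1}=\CC[h]y^{i+1}$ and $A_{-(j+1)}=\CC[h]x^{j+1}$; since $y^{i+1}d=\sigma^{-(i+1)}(d)y^{i+1}$ and $\sigma$ is an automorphism of $\CC[h]$, we get $\CC[h]y^{i+1}=y^{i+1}\CC[h]$, whence $A_{i+1}A_{-(j+1)}=y^{i+1}\CC[h]x^{j+1}$, which has the desired shape. For uniqueness it is enough to prove that $y^{n}R(h)x^{m}=0$ in $A$ (with $n,m\ge 1$, $R\in\CC[h]$) forces $R=0$. Writing $y^{n}R(h)=\sigma^{-n}(R)(h)\,y^{n}$ and using the identities $y^{n}x^{n}=\sigma^{-(n-1)}(P)\cdots P$ from Lemma~\ref{GWA-grading}, the element $y^{n}R(h)x^{m}$ equals $\sigma^{-n}(R)$ times a product of $\sigma$-shifts of $P$ times one of $x^{m-n}$, $1$, or $y^{n-m}$ --- that is, a left $\CC[h]$-multiple of the free $\CC[h]$-generator of $A_{n-m}$ furnished by the basis of Corollary~\ref{GWA_countable_basis}. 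Hence $y^{n}R(h)x^{m}=0$ forces $\sigma^{-n}(R)\cdot\prod_{k}\sigma^{-k}(P)=0$ in $\CC[h]$; since $P\ne 0$ (indeed nonconstant in the case at hand) and $\CC[h]$ is an integral domain, this gives $\sigma^{-n}(R)=0$, so $R=0$.

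The one genuinely delicate point is the reduction in the first paragraph: an element of $\Lambda_A$ is a priori an infinite ``matrix'' whose entries lie in varying homogeneous pieces of $A$, and one must make sure it is faithfully and uniquely described by those entries. This is exactly what Lemma~\ref{algtensor} secures --- there is no completion to worry about, so the entrywise description is literally valid --- and everything after that is routine manipulation with the defining relations~\eqref{relations_GWA} and the $\ZZ$-grading.
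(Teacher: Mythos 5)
Your proof is correct and takes essentially the same route as the paper, whose entire argument for this lemma is the one-line citation of Lemma~\ref{GWA-grading}; you are simply supplying the details the paper leaves implicit, namely that the tensor product is algebraic (Lemma~\ref{algtensor}), that $A_{i+1}A_{-(j+1)}=y^{i+1}\CC[h]x^{j+1}$ by the grading, and that $P\mapsto y^{i+1}P(h)x^{j+1}$ is injective because $P\neq 0$ and $\CC[h]$ is a domain. All of these details check out, so there is nothing to correct.
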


\begin{proof}
Follows from Lemma \ref{GWA-grading}.
\end{proof}

Define $j_{1}:A_1A_{-1}\to \Lambda_A$ by $j_{1}(a)=e_{00}\otimes a$. We embed $\Lambda_A$ in a suitable algebra so that we can construct a Morita equivalence
 to its subalgebra $j_1(A_1A_{-1})=e_{00}\otimes A_1A_{-1}$. Consider the faithful representation $\rho:A\to \mcE$ from Lemma \ref{representation} (where $\mcE=\mcE_1$ if $q=1$ and $\mcE=\mcE_2$ if $q\neq 1$). Tensoring with $1_{\mcT}$ we obtain an injective morphism $1_{\mcT} \otimes \rho:\mcT\otimes A \to \mcT\otimes \mcE$ which restricts to an injective morphism    $\bar{\rho}: \mcT_A\hookrightarrow \mcT \otimes \mcE$.

Now, we show $\Lambda_A\cong_{\mathfrak{KK}^{alg}} A_1A_{-1}$.

\begin{thm}\label{moritaker}
There is a Morita equivalence between $\Lambda_A$ and $j_1(A_1A_{-1})$, therefore there is an invertible element
 $\theta\in kk^{alg}(\Lambda_A, A_1A_{-1})$ which is an inverse of $kk(j_1)$.
\end{thm}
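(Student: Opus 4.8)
The plan is to realize $\Lambda_A\cong_{\mathfrak{KK}^{alg}}j_1(A_1A_{-1})$ by an explicit Morita context in the sense of Definition~\ref{MoritaDef} and then transport the resulting invertible class along the isomorphism $j_1$. By Propositions~\ref{classical_and_quantum} and~\ref{canonical_GWA} it suffices to treat the two normalized cases covered by Lemma~\ref{representation}: $\sigma(h)=h-1$ with $P(0)=0$, and $\sigma(h)=qh$ (with $q$ not a root of unity) with $P(1)=0$; I will describe the first, the second being identical after replacing $N$ by $G$ and $P(0)=0$ by $P(1)=0$ and using the appropriate relations of Lemma~\ref{relations-N-and-G}. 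Using $\rho\colon A\to\mcE_1$ from Lemma~\ref{representation} and the injection $\bar\rho\colon\mcT_A\hookrightarrow E:=\mcT\otimes_\pi\mcE_1$, I identify $\Lambda_A$ with a subalgebra of $\mcK\otimes\mcE_1\subset E$ via $\bar\rho(e_{ij}\otimes y^{i+1}R(h)x^{j+1})=e_{ij}\otimes(P(N)\mcU_1)^{i+1}R(N)\mcU_{-1}^{j+1}$, and $j_1(A_1A_{-1})$ with $e_{00}\otimes\rho(A_1A_{-1})$.

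First I would set $\xi_i:=(S\otimes\mcU_1)^{i}(e_{00}\otimes 1)=e_{i0}\otimes\mcU_1^{i}$ and $\eta_j:=(e_{00}\otimes 1)(S^{*}\otimes\mcU_{-1})^{j}=e_{0j}\otimes\mcU_{-1}^{j}$ in $E$; note $(S^{*}\otimes\mcU_{-1})(S\otimes\mcU_1)=1$. One checks the three conditions of Definition~\ref{MoritaDef} by computing with Lemma~\ref{relations-N-and-G} and the relations $\mcU_{-1}\mcU_1=1$, $\mcU_1\mcU_{-1}=1-e_{00}$. For~(1): writing $(P(N)\mcU_1)^{i+1}=Q_{i+1}(N)\mcU_1^{i+1}$, one obtains for $\omega=e_{ij}\otimes(P(N)\mcU_1)^{i+1}R(N)\mcU_{-1}^{j+1}$ that $\eta_k\omega\xi_l=\delta_{ki}\delta_{lj}\,e_{00}\otimes P(N)g(N)$ for a suitable $g\in\CC[h]$; since $P(0)=0$ this diagonal operator has vanishing $(0,0)$-entry, so it lies in $e_{00}\otimes\rho(A_1A_{-1})=j_1(A_1A_{-1})$. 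For~(3): $\xi_i\eta_i=e_{ii}\otimes\mcU_1^{i}\mcU_{-1}^{i}=e_{ii}\otimes(1-e_{00}-\dots-e_{i-1,i-1})$, and $(1-e_{00}-\dots-e_{k-1,k-1})\mcU_1^{k+1}=\mcU_1^{k+1}$, so $(\sum_i\xi_i\eta_i)a=a$ for all $a\in\Lambda_A$. For the rapid-decrease condition~(2), Lemma~\ref{algtensor} (applicable since $\mcE_1$ has a countable basis) lets me write $a\in\Lambda_A$ as a finite sum $\sum_t\kappa_t\otimes e_t$ with $\kappa_t\in\mcK\cong s$ rapidly decreasing and $e_t\in\mcE_1$, so that $\eta_j a\xi_i=e_{00}\otimes\sum_t(\kappa_t)_{ji}\,\mcU_{-1}^{j}e_t\mcU_1^{i}$; conjugating a fixed matrix of $\mcE_1$ by $\mcU_{-1}^{j},\mcU_1^{i}$ enlarges its entries only polynomially in $i+j$, so this is rapidly decreasing in $i,j$.

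Next I would take for the reverse Morita context, from $j_1(A_1A_{-1})$ to $\Lambda_A$, the one-term family with $\xi_0'=\eta_0'=e_{00}\otimes 1$ (and $\xi_l'=\eta_k'=0$ for positive indices); it is trivially a Morita context, and the homomorphism it determines is $b\mapsto e_{00}\otimes b$, i.e. it represents $kk(\iota_1)$ for the inclusion $\iota_1\colon j_1(A_1A_{-1})\hookrightarrow\Lambda_A$. Then I would verify the hypotheses of Proposition~\ref{MoritaCond}: $\Lambda_A\xi_i\xi_l'\subset\Lambda_A$ and $\eta_k'\eta_j\Lambda_A\subset\Lambda_A$ reduce (via $\mcU_{-1}\mcU_1=1$ and $P(0)=0$) to recognizing $e_{k0}\otimes(P(N)\mcU_1)^{k+1}R(N)\mcU_{-1}$ and $e_{0l}\otimes P(N)g(N)\mcU_{-1}^{l}$ as images under $\bar\rho$ of elements of the slots $e_{k0}\otimes A_{k+1}A_{-1}$ and $e_{0l}\otimes A_1A_{-(l+1)}$ of $\Lambda_A$, and the remaining two conditions are immediate since $\xi_l'\xi_i$ and $\eta_k\eta_j'$ vanish unless all indices are $0$. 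Proposition~\ref{MoritaCond} then makes $\theta_0:=kk((\xi_i),(\eta_j))\in kk^{alg}(\Lambda_A,j_1(A_1A_{-1}))$ a two-sided inverse of $kk(\iota_1)$. Since $j_1$ factors as $A_1A_{-1}\xrightarrow{\phi}j_1(A_1A_{-1})\xrightarrow{\iota_1}\Lambda_A$ with $\phi$ an isomorphism of locally convex algebras, $\theta:=\theta_0\cdot kk(\phi)^{-1}$ satisfies $kk(j_1)\cdot\theta=1_{A_1A_{-1}}$ and $\theta\cdot kk(j_1)=1_{\Lambda_A}$, which is the claim.

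The main obstacle is the bookkeeping in conditions~(1) and~(3) of the context and in the containments needed for Proposition~\ref{MoritaCond}: one must push the relations of $\mcE_1$ through the twisted products $(P(N)\mcU_1)^{i+1}R(N)\mcU_{-1}^{j+1}$ while tracking the finite-rank corrections $e_{00},\dots,e_{k-1,k-1}$, and it is precisely the normalization $P(0)=0$ (resp. $P(1)=0$) from Proposition~\ref{canonical_GWA} that kills the relevant $(0,0)$-corners and forces the outputs into $\rho(A_1A_{-1})$ rather than merely into $\rho(\CC[h])$. Condition~(2) is the only genuinely topological point, and it becomes comparatively soft once Lemma~\ref{algtensor} has collapsed $\mcK\otimes_\pi\mcE_1$ to the algebraic tensor product $\mcK\otimes\mcE_1$.
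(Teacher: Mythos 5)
Your proof is correct and follows essentially the same route as the paper: the same faithful representation from Lemma~\ref{representation}, the same Morita context $\xi_i=e_{i0}\otimes\mcU_1^i$, $\eta_j=e_{0j}\otimes\mcU_{-1}^j$, and the same verification via Definition~\ref{MoritaDef} and Proposition~\ref{MoritaCond}. The only cosmetic differences are your choice of the trivial one-element reverse context $\xi_0'=\eta_0'=e_{00}\otimes 1$ where the paper reuses $(\xi_i,\eta_j)$, and your slightly more careful treatment of the rapid-decrease condition via Lemma~\ref{algtensor}.
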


\begin{proof}
We write the proof in the case $\sigma(h)=h-1$ and $P(0)=0$. The case $\sigma(h)=qh$ and $P(1)=0$ can be proven in a similar way since the matrices involved
in the representation satisfy corresponding algebraic relations (Lemma \ref{relations-N-and-G}).

Using the representation from Lemma \ref{representation}, we obtain a faithful representation
$$\bar\rho:\Lambda_A\to\mcT \otimes \mcE_1.$$
The Morita equivalence is given by $\xi_i=\xi'_i=e_{i, 0}\otimes \mcU^i_1$ and $\eta_j=\eta'_j=e_{0,j}\otimes \mcU^i_{-1}$. We check that these sequences
 satisfy the conditions in Definition \ref{MoritaDef} and Proposition \ref{MoritaCond}.

First, we establish that $\xi_i$, $\eta_j$ defines a Morita context between $\Lambda_A$ and $e_{00}\otimes A_1A_{-1}$ according with
Definition~\ref{MoritaDef}. Let $w=\sum e_{i, j}\otimes y^{i+1}P_{i,j}(h)x^{j+1}$ be an element of $\Lambda_A$.

\begin{enumerate}
\item $\eta_j\bar{\rho}(w)\xi_i\in e_{00}\otimes A_1A_{-1}$. We have
$$\eta_j\bar{\rho}(w)\xi_i = e_{00}\otimes \mcU_{-1}^j[P(N)\mcU_1]^{j+1}P_{j, i}(N)\mcU_{-1}^{i+1}\mcU_{1}^i$$
Note that $(P(N)\mcU_1)^{j+1}=\mcU_1^{j+1}R_{j+1}(N)$ where
$$R_{j+1}(N)=P(\sigma(N))\dots P(\sigma^{j+1}(N))=P(\sigma(N))R'_{j+1}(N)$$
and therefore we have
\begin{align*}
\eta_j\bar{\rho}(w)\xi_i &= e_{00}\otimes \mcU_{1}R_{j+1}(N)P_{j, i}(N)\mcU_{-1}\\
&= e_{00}\otimes \mcU_{1}P(\sigma(N))R'_{j+1}(N)P_{j, i}(N)\mcU_{-1}\\
&= e_{00}\otimes P(N)\mcU_{1}R'_{j+1}(N)P_{j, i}(N)\mcU_{-1}\\
&=\bar{\rho}(e_{00}\otimes yR'_{j+1}(h)P_{j, i}(h)x) \in \bar{\rho}(e_{00}\otimes A_1A_{-1})
\end{align*}

\item The terms $\eta_j\bar{\rho}(w)\xi_i$ are rapidly decreasing. This is because the elements of $\Lambda_A$ are finite sums.

\item $(\sum \xi_i\eta_i)\bar{\rho}(w) = \bar\rho(w)$. We have
\begin{align*}
(\sum \xi_i\eta_i)\bar{\rho}(w) &=
\left(\sum e_{i,i}\otimes \mcU_1^i\mcU_{-1}^{i}\right)\left(\sum e_{k, l}\otimes (\mcU_1P(N))^{k+1}P_{k, l}(N)U_{-1}^{l+1}\right) \\
&= \sum{} e_{k, l}\otimes \mcU_1^k\mcU_{-1}^{k}\mcU_1^{k+1}R_{k+1}(N)P_{k, l}(N)\mcU_{-1}^{l+1} \\
&= \sum e_{k, l}\otimes \mcU_1^{k+1}R_{k+1}(N)P_{k, l}(N)\mcU_{-1}^{l+1}\\
&= \bar{\rho}(w)
\end{align*}
\end{enumerate}

Now we check the conditions of  Proposition \ref{MoritaCond}. We show that $\bar{\rho}(w)\xi_k\xi'_l$ and $\eta'_k\eta'_l\bar{\rho}(w)$ are still elements of
$\bar{\rho}(\Lambda_A)$.
\begin{align*}
\bar{\rho}(w)\xi_k\xi'_l &= \left(\sum e_{i, j}\otimes (\mcU_1P(N))^{i+1}P_{i, j}(N)U_{-1}^{j+1}\right)(e_{k, 0}\otimes \mcU_1^k)(e_{l, 0}\otimes \mcU_1^l)\\
 \intertext{which is $0$ unless $l=0$ and in that case we obtain}
\bar{\rho}(w)\xi_k\xi'_l &= \sum e_{i, 0}\otimes (\mcU_1P(N))^{i+1}P_{i, k}(N)U_{-1}\\
&= \bar{\rho}\left(\sum e_{i, 0}\otimes y^{i+1}P_{i, k}(h)x\right)\in \bar{\rho}(\Lambda_A) \\ \intertext{and similarly we compute}
\eta'_k\eta_l\bar{\rho}(w) &= (e_{0, k}\otimes \mcU_{-1}^k)(e_{0, l}\otimes \mcU_{-1}^l)
\left(\sum e_{i, j}\otimes (\mcU_1P(N))^{i+1}P_{i, j}(N)U_{-1}^{j+1}\right)\\ \intertext{which is $0$ unless $k=0$ and in that case we obtain}
\bar{\rho}(w)\xi_k\xi'_l &= \sum e_{0, j}\otimes \mcU_{-1}^l (\mcU_1P(N))^{l+1}P_{l, j}(N)U_{-1}^{j+1}\\
&= \bar{\rho}\left(\sum e_{0, j}\otimes y R'_{l+1}(h)P_{l, j}(h)x^{j+1}\right)\in \bar{\rho}(\Lambda_A).
\end{align*}
The Morita context from $e_{00}\otimes A_1A_{-1}$ to $\Lambda_A$ is defined by  $(\xi'_i, \eta'_j)$. So far we have proved $kk((\xi_i), (\eta_j))\cdot kk((\xi'_i), (\eta'_j))=1_{\Lambda_A}$. Let $z=e_{00}\otimes yP_{0, 0}x\in e_{00}\otimes A_1A_{-1}$. Then $\bar\rho(z)\xi'_l\xi_k=\bar\eta_l\eta'_{k}\rho(z)=0$ unless $l=k=0$ and in this case $\bar\rho(z)\xi'_0\xi_0=\bar\rho(z)\eta_0\eta'_{0}=\bar\rho(z)$. Thus we have $kk((\xi'_i), (\eta'_j))\cdot kk((\xi_i), (\eta_j))=1_{e_{00}\otimes A_1A_{-1}}$.

\end{proof}

Next, we show $\mcT_A\cong_{\mathfrak{KK}^{alg}} A_0$. Define $j_0:A_0\to \mcT_A$ by $j_0(a)=1\otimes a$. We show that this inclusion induces an
invertible element $kk(j_0)\in kk_0^{alg}(\mcT_A,A_0)$.

\begin{lem}
There is a quasihomomorphism $ (id, Ad(S\otimes 1)): \mcT_A \rightrightarrows \mcT\otimes A \triangleright \mcC$, where
$$\mcC=\bigoplus_{i, j\in\NN}  e_{i, j} \otimes A_iA_{-j}.$$
Here $Ad(S\otimes 1)$ is the restriction of $Ad(S\otimes 1):\mcT\otimes A\to \mcT\otimes A$ defined by $x\mapsto (S\otimes 1)x(S^*\otimes 1)$.
\end{lem}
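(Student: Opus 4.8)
\emph{Proof strategy.} The plan is to verify the three defining conditions of a quasihomomorphism directly, reducing everything to a set of algebraic generators of $\mcT_A$ by means of the Remark following the definition of a quasihomomorphism.

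The first step is to record the ambient structure. Since $S^{*}S=1$ in $\mcT$, for all $w,v\in\mcT\otimes A$ we have
\begin{align*}
\bigl((S\otimes 1)w(S^{*}\otimes 1)\bigr)\bigl((S\otimes 1)v(S^{*}\otimes 1)\bigr)&=(S\otimes 1)w(S^{*}S\otimes 1)v(S^{*}\otimes 1)\\
&=(S\otimes 1)wv(S^{*}\otimes 1),
\end{align*}
so $Ad(S\otimes 1)\colon\mcT\otimes A\to\mcT\otimes A$ is a continuous algebra homomorphism, whose restriction along $\mcT_A\hookrightarrow\mcT\otimes A$ supplies the second component of the pair $(id,Ad(S\otimes 1))$ (the first being the inclusion $id\colon\mcT_A\hookrightarrow\mcT\otimes A$). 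Next one checks that $\mcC=\bigoplus_{i,j}e_{ij}\otimes A_iA_{-j}$ is a closed subalgebra of $\mcK\otimes A\subseteq\mcT\otimes A$: from the $\ZZ$-grading of Lemma~\ref{GWA-grading} one has $A_mA_n\subseteq A_{m+n}$, so $(e_{ij}\otimes a)(e_{kl}\otimes b)=\delta_{jk}e_{il}\otimes ab$ with $ab\in A_iA_{-j}A_jA_{-l}\subseteq A_iA_{0}A_{-l}\subseteq A_iA_{-l}$; and $\mcC$ is closed because it is the intersection of the preimages of the closed subspaces $A_iA_{-j}\subseteq A$ under the continuous matrix-entry maps $\mcK\otimes A\to A$. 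Finally, since $A_0=\CC[h]$, $A_1=\CC[h]y$, $A_{-1}=\CC[h]x$ and $(1\otimes f(h))(S\otimes y)=S\otimes f(h)y$, $(1\otimes f(h))(S^{*}\otimes x)=S^{*}\otimes f(h)x$, the algebra $\mcT_A$ is generated, as a locally convex algebra, by the unit $1\otimes 1$ together with $1\otimes h$, $S\otimes y$ and $S^{*}\otimes x$.

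By the Remark it then suffices to verify, on each of these generators $g$, that $g-Ad(S\otimes 1)(g)\in\mcC$, $g\mcC\subseteq\mcC$ and $\mcC g\subseteq\mcC$. Using $v_kv_{-l}$, $v_ke_{ij}$ and $e_{ij}v_k$ from the definition of $\mcT$ one gets $SS^{*}=1-e_{00}$, $S^{2}S^{*}=S-e_{10}$ and $SS^{*2}=S^{*}-e_{01}$, whence
\begin{align*}
(1\otimes h)-Ad(S\otimes 1)(1\otimes h)&=e_{00}\otimes h\in e_{00}\otimes A_0A_0,\\
(S\otimes y)-Ad(S\otimes 1)(S\otimes y)&=e_{10}\otimes y\in e_{10}\otimes A_1A_0,\\
(S^{*}\otimes x)-Ad(S\otimes 1)(S^{*}\otimes x)&=e_{01}\otimes x\in e_{01}\otimes A_0A_{-1},
\end{align*}
each of which lies in $\mcC$ (the generator $1\otimes 1$ giving $e_{00}\otimes 1\in e_{00}\otimes A_0A_0$ in the same way). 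For the normalizing conditions one computes, for a typical $e_{ij}\otimes a\in\mcC$ with $a\in A_iA_{-j}$,
\[
(1\otimes h)(e_{ij}\otimes a)=e_{ij}\otimes ha,\quad (S\otimes y)(e_{ij}\otimes a)=e_{i+1,j}\otimes ya,\quad (S^{*}\otimes x)(e_{ij}\otimes a)=e_{i-1,j}\otimes xa,
\]
and the three symmetric products on the right, and invokes $A_0A_i\subseteq A_i$, $A_{\pm 1}A_i\subseteq A_{i\pm 1}$, $A_{-j}A_0\subseteq A_{-j}$ and $A_{-j}A_{\pm 1}\subseteq A_{-j\pm 1}$ (together with the convention $e_{mn}=0$ for $m<0$ or $n<0$) to see that each product again lies in the homogeneous component of $\mcC$ indexed by the new matrix position. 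Since $\mcC$ is an algebra, the Remark now upgrades these to $w-Ad(S\otimes 1)(w)\in\mcC$, $w\mcC\subseteq\mcC$ and $\mcC w\subseteq\mcC$ for every $w\in\mcT_A$, which is exactly the assertion.

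The genuinely load-bearing points are the two opening structural claims — multiplicativity of $Ad(S\otimes 1)$, which is exactly where the Toeplitz relation $S^{*}S=1$ enters, and closedness of the subalgebra $\mcC$ — together with the small amount of bookkeeping needed to confirm that the three error terms $e_{00}\otimes h$, $e_{10}\otimes y$, $e_{01}\otimes x$ sit in the correct homogeneous pieces $A_0A_0$, $A_1A_0$, $A_0A_{-1}$ of $A$; the remaining verifications are routine computations with the elementary matrices of $\mcT$.
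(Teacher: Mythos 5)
Your proposal is correct and follows essentially the same route as the paper: check the quasihomomorphism conditions on the generators $1\otimes A_0$, $S\otimes A_1$, $S^*\otimes A_{-1}$ of $\mcT_A$, using $SS^*=1-e_{00}$, $S^2S^*=S-e_{10}$, $SS^{*2}=S^*-e_{01}$ to identify the error terms $e_{00}\otimes a_0$, $e_{10}\otimes a_1$, $e_{01}\otimes a_{-1}$, and the grading relation $A_{-j}A_j\subseteq A_0$ to see that $\mcC$ is a subalgebra. The only difference is that you spell out the normalizing conditions $g\mcC\subseteq\mcC$, $\mcC g\subseteq\mcC$ and the closedness of $\mcC$, which the paper leaves as clear.
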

\begin{proof}
We have $A_iA_{-j}A_jA_{-k}\subseteq A_iA_{-k}$ because $A_{-j}A_{j}\subseteq A_0$, therefore $\mcC$ is a subalgebra.  To prove that the pair
$(id, Ad(S\otimes 1))$ defines a quasihomomorphism we check the conditions on the generators. It is clear that
$(1\otimes A_0)\mcC, (S\otimes A_1)\mcC$ and $(S^*\otimes A_{-1})\mcC$ are subsets of $\mcC$. Now we let $a_i\in A_i$ and we check
\begin{eqnarray*}
(id-Ad(S\otimes 1))(1\otimes a_0) &=&  e\otimes a_0 \in \mcC \\
(id-Ad(S\otimes 1))(S\otimes a_1) &=& Se\otimes a_1 \in \mcC \\
(id-Ad(S\otimes 1))(S^*\otimes a_{-1}) &=&  eS^*\otimes a_{-1} \in \mcC.
\end{eqnarray*}
\end{proof}

Define $i_{0}:A_0\to \mcC$ by $i_0(a)=e_{00}\otimes a$.

\begin{prop}\label{Morita2}
There is a Morita equivalence between $\mcC$ and $i_0(A_0)$. Therefore there is an invertible element $\kappa\in kk^{alg}(\mcC, A_0)$.
\end{prop}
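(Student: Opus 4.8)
The plan is to mimic the Morita equivalence argument already used for $\Lambda_A$ in Theorem~\ref{moritaker}, but now with the "diagonal-free" cut-down: the algebra $\mcC = \bigoplus_{i,j\in\NN} e_{i,j}\otimes A_iA_{-j}$ is a generalized matrix algebra over the corner $i_0(A_0) = e_{00}\otimes A_0$ (note $A_0 = \CC[h]$, and $A_iA_{-i}\subseteq A_0$, $A_0A_0 = A_0$ so that $i_0(A_0)$ is genuinely a subalgebra). First I would choose the obvious Morita context: set $\xi_i = \xi_i' = e_{i,0}\otimes \mcU_1^i$ and $\eta_j = \eta_j' = e_{0,j}\otimes \mcU_{-1}^j$ inside $\mcT\otimes\mcE$, exactly as in Theorem~\ref{moritaker} but viewed relative to $\mcC$ (here $\mcE = \mcE_1$ when $q=1$ and $\mcE = \mcE_2$ when $q\neq 1$, and I use the faithful representation $\bar\rho$ of $\mcT_A$ obtained by tensoring $1_{\mcT}$ with $\rho$ from Lemma~\ref{representation}, extended to $\mcC$). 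The key computation is that $\eta_j\,\bar\rho(w)\,\xi_i \in \bar\rho(e_{00}\otimes A_0)$ for $w\in\mcC$: writing $w = \sum e_{k,l}\otimes w_{k,l}$ with $w_{k,l}\in A_kA_{-l}$, one has $\bar\rho(w_{k,l}) = (\text{shift part})\cdot(\text{polynomial in }N\text{ or }G)$, and the sandwich $\mcU_{-1}^j(\cdots)\mcU_1^i$ collapses the shifts using $\mcU_{-1}\mcU_1 = 1$ and $\mcU_1\mcU_{-1} = 1-e_{00}$ (together with the vanishing condition $P(0)=0$, resp.\ $P(1)=0$, as in the earlier proof) to land back in the degree-zero part. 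The remaining Morita-context axioms (rapid decrease, which is automatic since elements of $\mcC$ are not finite sums in general — here I must be slightly more careful than in Theorem~\ref{moritaker}) and the closure conditions $\mcC\xi_i\xi_l'\subset\mcC$, $\eta_k'\eta_j\mcC\subset\mcC$, $i_0(A_0)\xi_l'\xi_i\subset i_0(A_0)$, etc., are checked on generators exactly as before.

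The one genuinely new point compared with Theorem~\ref{moritaker} is condition~(2) of Definition~\ref{MoritaDef}: elements of $\mcC$ need not be finite sums, so I cannot dismiss rapid decrease by finiteness. Instead I would argue that for $w = \sum_{k,l} e_{k,l}\otimes w_{k,l}\in\mcC$, a continuous seminorm on $i_0(A_0)$ applied to $\eta_j w\xi_i = e_{00}\otimes (\text{something built from }w_{j,i})$ is controlled by a seminorm of $w_{j,i}$ in $A$, and since $w\in\mcK\otimes_\pi A$ (recall $\mcC\subseteq\mcT\otimes_\pi A = \mcT\otimes A$ by Lemma~\ref{algtensor}, and the off-diagonal generators of $\mcT_A$ produce entries in $\mcK\otimes A$), the matrix coefficients $w_{j,i}$ are rapidly decreasing in $(i,j)$ by the very definition of $\mcK\otimes_\pi A$. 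So condition~(2) follows from membership in $\mcK\otimes_\pi A$; this is the step I expect to require the most care in writing up, since it is where the topology (as opposed to pure algebra) actually enters.

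Once the Morita context $((\xi_i),(\eta_j))$ from $\mcC$ to $i_0(A_0)$ and the reverse context $((\xi_i'),(\eta_j'))$ from $i_0(A_0)$ to $\mcC$ are in place, Proposition~\ref{MoritaCond} gives $kk((\xi_i),(\eta_j))\cdot kk((\xi_i'),(\eta_j')) = 1_{\mcC}$ and $kk((\xi_i'),(\eta_j'))\cdot kk((\xi_i),(\eta_j)) = 1_{i_0(A_0)}$, so $kk((\xi_i),(\eta_j))$ is invertible in $kk^{alg}$; calling its inverse (equivalently, the class of the reverse context) $\kappa\in kk^{alg}(\mcC, A_0)$ — after identifying $i_0(A_0)$ with $A_0$ via $i_0$ — finishes the proof. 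I expect no obstacle in this last bookkeeping step; it is a verbatim application of Proposition~\ref{MoritaCond} just as in Theorem~\ref{moritaker}.
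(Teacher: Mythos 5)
Your proposal is correct and follows essentially the same route as the paper: the same Morita context $\xi_i=\xi_i'=e_{i,0}\otimes\mcU_1^i$, $\eta_j=\eta_j'=e_{0,j}\otimes\mcU_{-1}^j$ inside $\mcT\otimes\mcE$ via the representation of Lemma~\ref{representation}, with the verification reduced to the computations of Theorem~\ref{moritaker} and $\kappa$ obtained by composing $kk((\xi_i),(\eta_j))$ with $kk(i_0)^{-1}$. (The only cosmetic difference is your concern about rapid decrease: since $\mcC$ is the algebraic direct sum $\bigoplus_{i,j} e_{i,j}\otimes A_iA_{-j}$ its elements are finite sums, so condition~(2) is automatic exactly as in Theorem~\ref{moritaker}, though your alternative argument via membership in $\mcK\otimes_{\pi}A$ is also valid.)
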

\begin{proof}
Using Lemma \ref{representation}, we think of $\mcC$ represented in $\mcT \otimes \mcE$ (where $\mcE=\mcE_1$ if $q=1$ and $\mcE=\mcE_2$ if $q\neq 1$). The Morita equivalence is given by $\xi_i=\xi'_i=e_{i, 0}\otimes \mcU^i_1$ and
$\eta_j=\eta'_j=e_{0,j}\otimes \mcU^i_{-1}$. The proof that these elements determine a Morita equivalence is similar to the proof of
Theorem~\ref{moritaker}.  The Morita context $((\xi_i), (\eta_j))$ determines a morphism $\mcC\to \mcK\otimes_{\pi} i_0(A_0)$ that in
turn determines the element $kk((\xi_i), (\eta_j))\in kk_0^{alg}(\mcC, A_0)$. We define $\kappa=kk((\xi_i), (\eta_j))kk(i_0)^{-1}$ where
$i_0:A_0\to e_{00}\otimes A_0$.

\end{proof}

\begin{prop}\label{LeftInv}
Let $\kappa\in kk(\mcC, A_0)$ as in Proposition \ref{Morita2}, then $$kk(j_0)kk(id, Ad(S\otimes 1))\kappa=1_{A_0}.$$
This implies that $kk(j_0)$ has a right inverse and that $kk(id, Ad(S\otimes 1))$ has a left inverse.
\end{prop}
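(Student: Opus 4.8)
The plan is to evaluate the composite $kk(j_0)\cdot kk(id, Ad(S\otimes 1))\cdot\kappa$, an element of $kk^{alg}(A_0,A_0)$, by first recognising $kk(j_0)\cdot kk(id, Ad(S\otimes 1))$ as the $kk$-class of an honest homomorphism and then applying Proposition~\ref{Morita2}.

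First I would precompose the quasihomomorphism $(id, Ad(S\otimes 1))\colon \mcT_A\rightrightarrows \mcT\otimes A\triangleright\mcC$ with the homomorphism $j_0\colon A_0\to \mcT_A$. By Proposition~\ref{quasi-homo}(3), $(j_0,\, Ad(S\otimes 1)\circ j_0)$ is again a quasihomomorphism from $A_0$ to $\mcC$ in $\mcT\otimes A$, and
\[
kk(j_0)\cdot kk(id, Ad(S\otimes 1)) = kk\big(j_0,\, Ad(S\otimes 1)\circ j_0\big).
\]
Since $SS^{*}=1-e_{00}$ in $\mcT$, one computes $\big(Ad(S\otimes 1)\circ j_0\big)(a)=(S\otimes 1)(1\otimes a)(S^{*}\otimes 1)=(1-e_{00})\otimes a$, so the difference $\phi:=j_0-Ad(S\otimes 1)\circ j_0$ is the homomorphism $a\mapsto e_{00}\otimes a$, i.e. exactly the map $i_0\colon A_0\to\mcC$ introduced before Proposition~\ref{Morita2}. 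As $e_{00}$ and $1-e_{00}$ are orthogonal idempotents we have $\phi(x)\big((1-e_{00})\otimes y\big)=\big((1-e_{00})\otimes y\big)\phi(x)=0$ for all $x,y\in A_0$, so Proposition~\ref{quasi-homo}(2) applies and gives $kk\big(j_0,\, Ad(S\otimes 1)\circ j_0\big)=kk(\phi)=kk(i_0)$. Hence $kk(j_0)\cdot kk(id, Ad(S\otimes 1))=kk(i_0)$.

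Next I would invoke Proposition~\ref{Morita2}: there $\kappa$ is built as an inverse of $kk(i_0)$ in $\mathfrak{KK}^{alg}$ (coming from the Morita equivalence between $\mcC$ and the corner $i_0(A_0)\cong A_0$, exactly as in the proof of Theorem~\ref{moritaker}), so $kk(i_0)\cdot\kappa=1_{A_0}$. Combining with the previous paragraph yields $kk(j_0)\cdot kk(id, Ad(S\otimes 1))\cdot\kappa=kk(i_0)\cdot\kappa=1_{A_0}$, which is the assertion. For the two consequences: this identity exhibits $kk(id, Ad(S\otimes 1))\cdot\kappa$ as a right inverse of $kk(j_0)$; and since $\kappa$ is invertible we also obtain $kk(j_0)\cdot kk(id, Ad(S\otimes 1))=\kappa^{-1}$, so that $\kappa\cdot kk(j_0)$ is a left inverse of $kk(id, Ad(S\otimes 1))$.

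The step I expect to require the most care is the first one: verifying that the hypotheses of Proposition~\ref{quasi-homo}(2) and~(3) are met — in particular that $Ad(S\otimes 1)\circ j_0$ really takes values making $(j_0, Ad(S\otimes 1)\circ j_0)$ a quasihomomorphism into $\mcC$, and that its difference map is the \emph{specific} homomorphism $i_0$ and not merely something $kk$-equivalent to it — so that the quasihomomorphism class collapses cleanly to $kk(i_0)$. Once this identification is in place, the remainder is just composition of morphisms in $\mathfrak{KK}^{alg}$ together with Proposition~\ref{Morita2}.
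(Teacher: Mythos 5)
Your argument is correct and follows essentially the same route as the paper: compute $(id-Ad(S\otimes 1))\circ j_0 = i_0$, identify the resulting quasihomomorphism class with $kk(i_0)$ via the orthogonality criterion, and then apply Proposition~\ref{Morita2} to get $kk(i_0)\kappa=1_{A_0}$. The paper's own proof is just a terser version of this; your verification of the hypotheses of Proposition~\ref{quasi-homo}(2)--(3) fills in details the paper leaves implicit.
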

\begin{proof}
We have
$$(id-Ad(S\otimes 1))(j_0(a_0))=e_{00}\otimes a_0,$$
thus $kk(j_0)kk(id, Ad(S\otimes 1))=kk(i_0)$. By Proposition \ref{Morita2}, $kk(i_0)\kappa=1_{A_0}$.
\end{proof}

To show that $kk(j_0)$ is invertible, we construct a right inverse for $kk(id, Ad(S\otimes 1))$. In order to do this, we construct a diffotopic family of
 quasihomomorphism between $\mcT_A$ and a subalgebra $\bar\mcC$ of $(\mcT\otimes_{\pi} \mcT) \otimes A$ and prove that $\bar\mcC$ is Morita equivalent to
  $\mcT_A$. To construct this diffotopic family we use the diffotopy $\phi_t:\mcT\to \mcT\otimes_{\pi} \mcT$ of Lemma \ref{ToepDiffo}.

Consider the map $\Phi_t=\phi_t\otimes id_A:\mcT\otimes A\to (\mcT\otimes_{\pi} \mcT)\otimes A$ where $\phi_t$ is the diffotopy of lemma \ref{ToepDiffo}. Since $\phi_0(S)=S\otimes 1$, then $\Phi_0(x\otimes a)=x\otimes 1\otimes a$.

\begin{lem}\label{Phi_t_diffeo}
There is a diffotopic family of quasihomomorphisms
$$
(\Phi_t, \Phi_0 \circ Ad(S\otimes 1)): \mcT_A \rightrightarrows (\mcT\otimes_{\pi}\mcT)\otimes A \triangleright \bar\mcC.
$$
Where $\bar{\mcC}$ is the subalgebra
$$\bigoplus_{i, j, p, q \in\NN} e_{i, j}\otimes S^pS^{*q}\otimes A_{i+p}A_{-(j+q)}.$$
\end{lem}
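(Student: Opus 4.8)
The plan is to verify directly that the pair $(\Phi_t,\Phi_0\circ Ad(S\otimes 1))$ meets the requirements in the definition of a quasihomomorphism together with those of Proposition~\ref{quasi-homo}(5). Both maps are restrictions to $\mcT_A$ of honest homomorphisms of locally convex algebras: $\Phi_t=\phi_t\otimes id_A$ is one because $\phi_t$ is, and $Ad(S\otimes 1)\colon\mcT\otimes A\to\mcT\otimes A$, $x\mapsto(S\otimes 1)x(S^{*}\otimes 1)$, is multiplicative since $(S^{*}\otimes 1)(S\otimes 1)=1$, so $\Phi_0\circ Ad(S\otimes 1)$ is too. By the remark following the definition of a quasihomomorphism it then suffices to work on a generating set of $\mcT_A$, for which we take the subspaces $1\otimes A_0$, $S\otimes A_1$, $S^{*}\otimes A_{-1}$, and to check there that $\Phi_t(w)-\Phi_0(Ad(S\otimes 1)(w))\in\bar\mcC$, $\Phi_t(w)\bar\mcC\subseteq\bar\mcC$ and $\bar\mcC\Phi_t(w)\subseteq\bar\mcC$, and finally that these conditions hold at the level of the associated maps into $\big((\mcT\otimes_{\pi}\mcT)\otimes A\big)[0,1]$.

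The heart of the argument is the arithmetic of $\bar\mcC$, which should be read as the closed linear span of the elements $e_{ij}\otimes S^{p}S^{*q}\otimes a$ with $a\in A_{i+p}A_{-(j+q)}$ (the $e_{ij}\otimes S^{p}S^{*q}$ are not linearly independent in $\mcT\otimes_{\pi}\mcT$, but the inclusion $A_{m+1}A_{-(n+1)}\subseteq A_{m}A_{-n}$ below makes the relation $S^{p}S^{*q}=S^{p+1}S^{*(q+1)}+e_{p,q}$ compatible with this description). I would combine the Toeplitz relations $e_{ij}e_{kl}=\delta_{jk}e_{il}$ and $S^{a}S^{*b}S^{c}S^{*d}=S^{a+c-b}S^{*d}$ for $c\ge b$, $=S^{a}S^{*\,b+d-c}$ for $c<b$ (so in particular $S^{2}S^{*}=S-e_{10}$, $SS^{*2}=S^{*}-e_{01}$, $Se=e_{10}$, $eS^{*}=e_{01}$ for $e=e_{00}$), with the grading facts furnished by Lemma~\ref{GWA-grading}: $A_mA_n\subseteq A_{m+n}$, with equality when $m,n$ have the same sign, $A_{-m}A_n\subseteq A_{n-m}$ and $A_mA_{-n}\subseteq A_{m-n}$ for $m,n\ge 0$, and $A_{m+1}A_{-(n+1)}\subseteq A_mA_{-n}$. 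A direct multiplication then shows that the product of $e_{ij}\otimes S^{p}S^{*q}\otimes a$ (with $a\in A_{i+p}A_{-(j+q)}$) and $e_{kl}\otimes S^{r}S^{*s}\otimes b$ (with $b\in A_{k+r}A_{-(l+s)}$) is again of this form, so $\bar\mcC$ is a subalgebra.

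Using the explicit $\phi_t$ of Lemma~\ref{ToepDiffo} together with $\phi_0(S)=S\otimes 1$ (whence $\phi_0(S^{*})=S^{*}\otimes 1$ and $\phi_0(S^{2}S^{*})=S^{2}S^{*}\otimes 1$), one computes on generators
\[\Phi_t(1\otimes a_0)-\Phi_0(Ad(S\otimes 1)(1\otimes a_0))=e_{00}\otimes 1\otimes a_0,\]
\[\Phi_t(S\otimes a_1)-\Phi_0(Ad(S\otimes 1)(S\otimes a_1))=f(t)\,e_{00}\otimes S\otimes a_1+g(t)\,e_{10}\otimes 1\otimes a_1,\]
\[\Phi_t(S^{*}\otimes a_{-1})-\Phi_0(Ad(S\otimes 1)(S^{*}\otimes a_{-1}))=\overline{f(t)}\,e_{00}\otimes S^{*}\otimes a_{-1}+\overline{g(t)}\,e_{01}\otimes 1\otimes a_{-1},\]
and since $a_0\in A_0A_0$, $a_1\in A_1A_0$, $a_{-1}\in A_0A_{-1}$ every summand has the required shape, so the difference lies in $\bar\mcC$. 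The absorption conditions $\Phi_t(w)\bar\mcC\subseteq\bar\mcC$ and $\bar\mcC\Phi_t(w)\subseteq\bar\mcC$ are handled the same way: one multiplies each of the at most three summands of $\Phi_t(w)$ against a generic $e_{kl}\otimes S^{r}S^{*s}\otimes b\in\bar\mcC$ and tracks, using the relations above and $A_{\pm1}A_m\subseteq A_{m\pm1}$, which space $A_{m'}A_{-n'}$ the resulting $A$-coefficient ($a_0b$ or $a_{\pm1}b$) lands in. This bookkeeping --- a finite case analysis on the signs of the degrees and on which indices vanish --- is the only real obstacle; conceptually $\bar\mcC$ has been engineered precisely so that the leading part $S^{2}S^{*}\otimes 1$ of $\phi_t(S)$, its adjoint, and the matrix-unit corrections $Se$, $eS^{*}$ all absorb it.

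It remains to lift everything to the diffotopy level. Because the $t$-dependence of $\Phi_t(w)$ enters only through $f,g\in\CC[0,1]$, which have all derivatives vanishing at $0$ and $1$, the assignment $w\mapsto(t\mapsto\Phi_t(w))$ is a homomorphism $\mcT_A\to\big((\mcT\otimes_{\pi}\mcT)\otimes A\big)[0,1]$, the three differences above lie in $\bar\mcC[0,1]$, and the absorption conditions hold at the $[0,1]$-level (a Leibniz computation at the endpoints, where the derivatives of $\Phi_\bullet(w)$ vanish, reduces these to the statements already proved). Thus the hypotheses of Proposition~\ref{quasi-homo}(5) are met and $(\Phi_t,\Phi_0\circ Ad(S\otimes 1))\colon\mcT_A\rightrightarrows(\mcT\otimes_{\pi}\mcT)\otimes A\triangleright\bar\mcC$ is a diffotopic family of quasihomomorphisms.
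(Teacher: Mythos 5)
Your proposal is correct and follows essentially the same route as the paper: check the quasihomomorphism conditions on the generators $1\otimes A_0$, $S\otimes A_1$, $S^{*}\otimes A_{-1}$ and compute the three differences explicitly (your $e_{00}$, $e_{10}=Se$, $e_{01}=eS^{*}$ match the paper's $e$, $Se$, $eS^{*}$). You supply more detail than the paper does — notably the verification that $\bar\mcC$ is a subalgebra and the lift to the $[0,1]$-level via Proposition~\ref{quasi-homo}(5) — but the underlying argument is the same.
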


\begin{proof}
We check that $(\Phi_t, \Phi_0 \circ Ad(S\otimes 1))$ define quasihomomorphisms using the generators of $\mcT_A$. First we note that
$\Phi_t(1\otimes A_0)\bar{\mcC}, \Phi_t(S\otimes A_1)\bar{\mcC}$ and $\Phi_t(S^*\otimes A_{-1})\bar{\mcC}$ are subsets of $\bar{\mcC}$. Finally, we compute
\begin{eqnarray*}
(\Phi_t-\Phi_0 \circ Ad(S\otimes 1))(1\otimes a_0) &=&  e\otimes 1\otimes  a_0 \in \bar{\mcC} \\
(\Phi_t-\Phi_0 \circ Ad(S\otimes 1))(S\otimes a_1)&=& f(t)(e\otimes S\otimes a_1)+ g(t)(Se\otimes 1\otimes a_1) \in \bar{\mcC}  \\
(\Phi_t-\Phi_0 \circ Ad(S\otimes 1))(S^*\otimes a_{-1})&=&  \bar{f}(t)(e\otimes S^*\otimes a_{-1})+ \bar{g}(t)(eS^*\otimes 1\otimes a_{-1})  \in \bar{\mcC}.
\end{eqnarray*}
\end{proof}

Define $\eta:\mcT_A\to \bar\mcC$ as the restriction of the injective morphism $\mcT\otimes A\to (\mcT\otimes_{\pi} \mcT)\otimes A$ given by
$\eta(x\otimes a)= e\otimes x\otimes a$.
\begin{prop}\label{eta_is_iso}
There is a Morita equivalence between $\bar\mcC$ and $\eta(\mcT_A)$. Therefore $kk(\eta)\in kk_0^{alg}(\mcT_A, \bar\mcC)$ is invertible.
\end{prop}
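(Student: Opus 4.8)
The plan is to exhibit an explicit Morita context between $\bar{\mcC}$ and $\eta(\mcT_A)$ and invoke Proposition~\ref{MoritaCond}, exactly as in the proof of Theorem~\ref{moritaker} and Proposition~\ref{Morita2}. First I would represent everything faithfully: using the faithful representation $\rho:A\to\mcE$ of Lemma~\ref{representation} (with $\mcE=\mcE_1$ when $q=1$ and $\mcE=\mcE_2$ when $q\neq 1$) and tensoring with $1_{\mcT\otimes_\pi\mcT}$, I get an injective morphism $(\mcT\otimes_\pi\mcT)\otimes A\hookrightarrow (\mcT\otimes_\pi\mcT)\otimes\mcE$ which restricts to a faithful representation of $\bar{\mcC}$. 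The subalgebra $\bar{\mcC}=\bigoplus_{i,j,p,q\in\NN}e_{i,j}\otimes S^pS^{*q}\otimes A_{i+p}A_{-(j+q)}$ should be thought of as "$\mcK\otimes\mcT$ smashed into $\mcT\otimes_\pi\mcT\otimes A$", so that collapsing the $\mcK$-factor back down to its $(0,0)$-corner recovers a copy of $\mcT_A$ sitting in $\mcT\otimes_\pi\mcT\otimes A$, which is precisely $\eta(\mcT_A)$.

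The Morita context is the obvious one in the $\mcK$-variable, ignoring the $\mcT\otimes_\pi\mcT\otimes A$ tensorands entirely on the "shift" side: set
\[
\xi_i=\xi_i'=e_{i,0}\otimes 1\otimes 1,\qquad \eta_j=\eta_j'=e_{0,j}\otimes 1\otimes 1,
\]
viewed as multipliers (or elements of a suitable unitization) acting on the representation above. Then $\eta_j\,w\,\xi_i$ for $w=e_{k,l}\otimes S^pS^{*q}\otimes a\in\bar{\mcC}$ is nonzero only when $k=j$ and $l=i$, in which case it equals $e_{0,0}\otimes S^pS^{*q}\otimes a\in\eta(\mcT_A)$; so condition (1) of Definition~\ref{MoritaDef} holds. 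Condition (2), rapid decrease of $\eta_j w\xi_i$ in $i,j$, is immediate because elements of $\bar{\mcC}$ are finite sums in the $\mcK$-indices (by Lemma~\ref{algtensor} the tensor products are algebraic). Condition (3), $(\sum_i\xi_i\eta_i)w=w$, follows from $\sum_i e_{i,i}\cdot e_{k,l}=e_{k,l}$ in $\mcK$. The reverse context $(\xi_i',\eta_j')$ from $\eta(\mcT_A)$ to $\bar{\mcC}$ is literally the same elements, and the compatibility conditions $\bar{\mcC}\,\xi_k\xi_l'\subset\bar{\mcC}$, $\eta_k'\eta_l\bar{\mcC}\subset\bar{\mcC}$, $\eta(\mcT_A)\,\xi_k'\xi_l\subset\eta(\mcT_A)$, $\eta_k\eta_l'\eta(\mcT_A)\subset\eta(\mcT_A)$ of Proposition~\ref{MoritaCond} are trivial multiplications of matrix units ($e_{k,0}e_{l,0}=0$ unless $l=0$, etc.), exactly as in Theorem~\ref{moritaker}. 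Hence $kk((\xi_i),(\eta_j))$ and $kk((\xi_i'),(\eta_j'))$ are mutually inverse, and composing with $kk$ of the corner inclusion $\mcT_A\xrightarrow{\sim}e_{0,0}\otimes\mcT_A$ shows $kk(\eta)$ is invertible in $kk_0^{alg}(\mcT_A,\bar{\mcC})$.

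I expect no serious obstacle here; the one point requiring a little care is the bookkeeping that the product $\eta_j w\xi_i$ genuinely lands in $\eta(\mcT_A)$ and not just in $\mcK\otimes(\mcT\otimes_\pi\mcT\otimes A)$ — i.e.\ that for $w\in\bar{\mcC}$ the surviving term $e_{0,0}\otimes S^pS^{*q}\otimes a$ with $a\in A_{p}A_{-q}$ (taking $i=j=0$, so $A_{0+p}A_{-(0+q)}$) really does define an element of $\mcT_A=\eta^{-1}(\bar{\mcC}\cap(e_{0,0}\otimes\mcT\otimes_\pi\mcT\otimes A))$, which it does since $S^pS^{*q}\otimes a$ lies in $\mcT\otimes A$ with $a\in A_pA_{-q}\subseteq A_{p-q}$, and $\mcT_A$ is by definition the closed subalgebra of $\mcT\otimes_\pi A$ generated by $1\otimes A_0$, $S\otimes A_1$, $S^*\otimes A_{-1}$, which contains all such $S^pS^{*q}\otimes a$. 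Everything else is a direct transcription of the Morita-context verifications already carried out twice in this section.
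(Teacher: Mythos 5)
Your overall strategy (faithfully represent, exhibit a Morita context in the $\mcK$-corner, invoke Proposition~\ref{MoritaCond}) is the paper's strategy, but your choice of Morita context elements is wrong, and the error sits exactly in the spot you flagged as ``requiring a little care'' but then only checked for $i=j=0$. With $\xi_i=e_{i,0}\otimes 1\otimes 1$ and $\eta_j=e_{0,j}\otimes 1\otimes 1$, the surviving term of $\eta_j w\xi_i$ for $w=e_{j,i}\otimes S^pS^{*q}\otimes a$ is $e_{0,0}\otimes S^pS^{*q}\otimes a$ with $a\in A_{j+p}A_{-(i+q)}$, \emph{not} $a\in A_pA_{-q}$. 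Since $A$ is $\ZZ$-graded with $A_m\cap A_n=0$ for $m\neq n$, we have $A_{j+p}A_{-(i+q)}\subseteq A_{j+p-i-q}$ while $A_pA_{-q}\subseteq A_{p-q}$, so for $i\neq j$ the element $S^pS^{*q}\otimes a$ does not lie in $\mcT_A$ (whose $S^pS^{*q}$-component is paired precisely with $A_pA_{-q}$). Concretely: $w=e_{1,0}\otimes 1\otimes a$ with $a\in A_1A_{0}=A_1$ lies in $\bar\mcC$, and your $\eta_1 w\xi_0=e_{0,0}\otimes 1\otimes a$; but $1\otimes A_1\not\subseteq\mcT_A$, so condition (1) of Definition~\ref{MoritaDef} fails.

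The fix is the one the paper uses: take $\xi_i=\xi_i'=e_{i,0}\otimes 1\otimes\mcU_1^i$ and $\eta_j=\eta_j'=e_{0,j}\otimes 1\otimes\mcU_{-1}^j$ inside $(\mcT\otimes_\pi\mcT)\otimes\mcE$ via the representation of Lemma~\ref{representation}. The extra shift operators in the third tensor factor conjugate $\rho(a)$ for $a\in A_{j+p}A_{-(i+q)}$ back into $\rho(A_pA_{-q})$ (using $\mcU_{-1}\mcU_1=1$ and the divisibility $R_p\mid R_{j+p}$ exactly as in the computation of Theorem~\ref{moritaker}), so that $\eta_j\bar\mcC\xi_i$ genuinely lands in $\eta(\mcT_A)$. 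Once the elements are corrected, the remaining verifications are the routine matrix-unit bookkeeping you describe.
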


\begin{proof}
Using Lemma \ref{representation}, we have an injective morphism $\bar\mcC\to(\mcT\otimes_{\pi} \mcT) \otimes \mcE$ (where $\mcE=\mcE_1$ if $q=1$ and $\mcE=\mcE_2$ if $q\neq 1$)The Morita equivalence is given by $\xi_i=\xi'_i=e_{i, 0}\otimes 1\otimes \mcU^i_1$ and $\eta_j=\eta'_j=e_{0,j}\otimes 1\otimes \mcU^j_{-1}$. The proof is
similar to the proof of Theorem \ref{moritaker}.
\end{proof}

\begin{thm}\label{toep_grado0}
$kk(j_0)\in kk^{alg}(A_0, \mcT_A)$ is invertible.
\end{thm}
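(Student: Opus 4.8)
The plan is to prove that $kk(j_0)$ is invertible by combining the left inverse already obtained in Proposition~\ref{LeftInv} with a right inverse extracted from the diffotopy of Lemma~\ref{Phi_t_diffeo}. Recall from Proposition~\ref{LeftInv} that $kk(j_0)$ has a right inverse, namely $kk(id, Ad(S\otimes 1))\,\kappa$, and that $kk(id, Ad(S\otimes 1))$ has a left inverse. So it suffices to show that $kk(id, Ad(S\otimes 1))$ is invertible, since then $kk(j_0) = (kk(id,Ad(S\otimes 1))\,\kappa)^{-1}$ up to the identification already available, and an element with both a left and a right inverse is invertible.

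\textbf{Main argument.} First I would feed the diffotopy of Lemma~\ref{Phi_t_diffeo} into part~(5) of Proposition~\ref{quasi-homo}. Since $kk^{alg}$ is diffotopy invariant, evaluating the diffotopic family $(\Phi_t, \Phi_0\circ Ad(S\otimes 1))$ at $t=0$ and $t=1$ yields
$$
kk(\Phi_1, \Phi_0\circ Ad(S\otimes 1)) = kk(\Phi_0, \Phi_0\circ Ad(S\otimes 1)).
$$
Now $\Phi_0 = \eta$ in the notation of Proposition~\ref{eta_is_iso}, because $\phi_0(S) = S\otimes 1$ forces $\Phi_0(x\otimes a) = e\otimes x\otimes a$. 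Hence by part~(2) or~(4) of Proposition~\ref{quasi-homo} (applied to the homomorphism $\eta$ and the inner automorphism $Ad(S\otimes 1)$ composed with it), the quasihomomorphism $(\Phi_0, \Phi_0\circ Ad(S\otimes 1))$ factors through $\eta$ and $(id, Ad(S\otimes 1))$; concretely,
$$
kk(\Phi_0, \Phi_0\circ Ad(S\otimes 1)) = kk(\eta)\cdot kk(id, Ad(S\otimes 1)).
$$
On the other side, $\Phi_1$ lands in $\bar{\mcC}$ after adjusting by the Morita equivalence, and the quasihomomorphism $(\Phi_1, \Phi_0\circ Ad(S\otimes 1))$ composed with the Morita inverse $kk(\eta)^{-1}$ from Proposition~\ref{eta_is_iso} produces an element $\lambda\in kk^{alg}(\mcT_A, \mcT_A)$ which I claim equals $1_{\mcT_A}$; this is the computation analogous to the one in Cuntz's original argument, using that $\phi_1(S) = S^2S^*\otimes 1 + e\otimes S$ so that after the Morita identification $(\Phi_1, \Phi_0\circ Ad)$ becomes diffotopic to the identity quasihomomorphism on $\mcT_A$. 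Combining these, $kk(\eta)\cdot kk(id, Ad(S\otimes 1))$ equals (a unit times) $kk(\eta)$, so $kk(id, Ad(S\otimes 1))$ is invertible with inverse $kk(\eta)^{-1}$ precomposed appropriately.

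\textbf{Conclusion.} Once $kk(id, Ad(S\otimes 1))$ is invertible, Proposition~\ref{LeftInv} gives $kk(j_0)\cdot kk(id,Ad(S\otimes 1))\cdot \kappa = 1_{A_0}$, so $kk(j_0)$ has the two-sided inverse $kk(id, Ad(S\otimes 1))\cdot\kappa$, proving the theorem.

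\textbf{The hard part} will be verifying the identity $kk(\Phi_1, \Phi_0\circ Ad(S\otimes 1))\cdot kk(\eta)^{-1} = 1_{\mcT_A}$, i.e.\ unwinding the effect of Cuntz's Toeplitz diffotopy after tensoring with $A$ and restricting to the subalgebra $\bar{\mcC}$. This requires matching the Morita context data of Proposition~\ref{eta_is_iso} against the explicit formulas for $\phi_1(S)$ and $\phi_1(S^*)$ from Lemma~\ref{ToepDiffo}, and checking that the resulting endomorphism of $\mcT_A$ in $kk^{alg}$ is the identity rather than some other unit. Everything else is a formal consequence of the properties of quasihomomorphisms in Proposition~\ref{quasi-homo}, the Morita invariance results, and the left-inverse computation already in place.
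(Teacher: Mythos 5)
Your overall architecture matches the paper's: get a left inverse for $kk(id, Ad(S\otimes 1))$ from Proposition \ref{LeftInv}, get a right inverse from the Toeplitz diffotopy together with Proposition \ref{eta_is_iso}, and conclude for $kk(j_0)$. However, the pivotal step of your argument rests on a false identity. You assert $\Phi_0=\eta$ ``because $\phi_0(S)=S\otimes 1$ forces $\Phi_0(x\otimes a)=e\otimes x\otimes a$''. It does not: $\phi_0(S)=S\otimes 1$ gives $\Phi_0(x\otimes a)=x\otimes 1\otimes a$, whereas $\eta(x\otimes a)=e\otimes x\otimes a$; one map keeps the Toeplitz variable in the first tensor factor, the other replaces it by the idempotent $e$ and moves it to the second factor, so they are genuinely different homomorphisms into $(\mcT\otimes_{\pi}\mcT)\otimes A$. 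Consequently your factorization $kk(\Phi_0,\Phi_0\circ Ad(S\otimes 1))=kk(\eta)\cdot kk(id,Ad(S\otimes 1))$ is not only unjustified but fails to typecheck under either composition convention: $kk(id,Ad(S\otimes 1))$ lies in $kk^{alg}(\mcT_A,\mcC)$ while $kk(\eta)$ lies in $kk^{alg}(\mcT_A,\bar\mcC)$, and neither target matches the other's source. The correct factorization comes from item (4) of Proposition \ref{quasi-homo} applied to $\psi=\Phi_0$, which maps $\mcC$ into $\bar\mcC$ because $\Phi_0(e_{i,j}\otimes a_ia_{-j})=e_{i,j}\otimes 1\otimes a_ia_{-j}$; in the paper's product convention it reads
$$kk(id, Ad(S\otimes 1))\, kk(\Phi_0|_{\mcC}) = kk(\Phi_0, \Phi_0\circ Ad(S\otimes 1)),$$
with the auxiliary map $\Phi_0|_{\mcC}\colon\mcC\to\bar\mcC$ appearing after $kk(id,Ad(S\otimes 1))$, not $kk(\eta)$ appearing before it.

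With that repair the argument closes, and the step you single out as ``the hard part'' is in fact the easy one: evaluating the formulas of Lemma \ref{Phi_t_diffeo} at $t=1$ shows that $\Phi_1-\Phi_0\circ Ad(S\otimes 1)$ agrees with $\eta$ on the generators $1\otimes a_0$, $S\otimes a_1$, $S^*\otimes a_{-1}$, and the difference is orthogonal to $\Phi_0\circ Ad(S\otimes 1)$ (since $e\,S=0=S^*e$ in the first Toeplitz factor), so item (2) of Proposition \ref{quasi-homo} gives $kk(\Phi_1,\Phi_0\circ Ad(S\otimes 1))=kk(\eta)$ outright -- no further diffotopy or ``Morita identification'' is required. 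Chaining this with diffotopy invariance and the corrected factorization yields $kk(id,Ad(S\otimes 1))\,kk(\Phi_0|_{\mcC})=kk(\eta)$, which is invertible by Proposition \ref{eta_is_iso}; hence $kk(id,Ad(S\otimes 1))$ has a right inverse $kk(\Phi_0|_{\mcC})\,kk(\eta)^{-1}$, and together with its left inverse it is invertible, after which your concluding paragraph correctly produces a two-sided inverse for $kk(j_0)$. So the skeleton is right, but as written the proof contains a genuine error at the factorization step that must be fixed as above.
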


\begin{proof}
By \ref{LeftInv}, we know that $kk(j_0)$ has a right inverse and $kk(id, Ad(S\otimes 1))$ has a left inverse. Now, we prove that $k(id, Ad(S\otimes 1))$
has a right inverse which completes the proof.

Since $\phi_0(S)=S\otimes 1$, then if $a_i\in A_{i}$ and $a_{-j}\in A_{-j}$, we have
$$\Phi_0(e_{i, j}\otimes a_ia_{-j})=e_{i, j}\otimes 1\otimes a_ia_{-j}\in \bar{\mcC}$$
and therefore $\Phi_0(\mcC)\subseteq \bar{\mcC}$, thus by item $(4)$ of Proposition \ref{quasi-homo} we have
$$kk(id, Ad(S\otimes 1)) kk(\Phi_0|_{\mcC}) = kk(\Phi_0, \Phi_0\circ Ad(S\otimes 1)).$$
By item $(5)$ of Proposition \ref{quasi-homo}, we obtain
$$ kk(\Phi_0, \Phi_0\circ Ad(S\otimes 1))= kk(\Phi_1, \Phi_0\circ Ad(S\otimes 1)).$$
We have $\phi_1(S)=S^{2}S^{*}\otimes 1+e\otimes S$ and therefore $\Phi_1 - \Phi_0\circ Ad(S\otimes 1) = \eta$. By item $(2)$ of Proposition \ref{quasi-homo},  $kk(\Phi_1, \Phi_0\circ Ad(S\otimes 1))=kk(\eta)$ and by Lemma \ref{eta_is_iso}, $kk(\eta)$ is invertible.
\end{proof}

With the isomorphisms in $\mathfrak{KK}^{alg}$ fromTheorems \ref{moritaker} and  \ref{toep_grado0}, we construct the desired exact triangle.

\begin{thm}\label{exact_triangle}
For a generalized Weyl algebra $A=\CC[h](\sigma, P(h))$ with $P$ a non constant polynomial and
\begin{itemize}
\item $q=1$ and $h_0\neq 0$ or
\item $q$ is not a root of unity and $P$ has a root different from $\frac{h_0}{1-q}$
\end{itemize}
there is an exact triangle
$$
S A\to A_1A_{-1}\overset{0}\to A_0 \to A .
$$
\end{thm}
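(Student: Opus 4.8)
I would produce the desired triangle as the image, under two $\mathfrak{KK}^{alg}$-isomorphisms, of the extension triangle attached to the Toeplitz extension of $A$. First I reduce to a canonical form: by Propositions~\ref{classical_and_quantum} and~\ref{canonical_GWA}, up to an algebra isomorphism (which preserves $\mathfrak{KK}^{alg}$-classes and carries exact triangles to exact triangles) I may assume either $\sigma(h)=h-1$ with $P(0)=0$, or $\sigma(h)=qh$ with $q$ not a root of unity and $P(1)=0$; these are exactly the hypotheses under which the faithful representation of Lemma~\ref{representation}, and hence Theorems~\ref{moritaker} and~\ref{toep_grado0}, are available.

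In this setting Proposition~\ref{Toeplitzext} provides a linearly split extension $0\to\Lambda_A\xrightarrow{\iota}\mcT_A\xrightarrow{\bar p}A\to 0$, so by Proposition~\ref{triangulated} it determines an exact triangle
$$
SA\to\Lambda_A\xrightarrow{kk(\iota)}\mcT_A\xrightarrow{kk(\bar p)}A
$$
in $(\mathfrak{KK}^{alg},S)$. By Theorem~\ref{moritaker} the map $kk(j_1):A_1A_{-1}\to\Lambda_A$ is invertible, and by Theorem~\ref{toep_grado0} the map $kk(j_0):A_0\to\mcT_A$ is invertible. Applying the isomorphism of triangles with components $id_{SA}$, $kk(j_1)^{-1}$, $kk(j_0)^{-1}$, $id_A$, I obtain an exact triangle
$$
SA\to A_1A_{-1}\xrightarrow{\,\delta\,}A_0\xrightarrow{kk(\bar p\circ j_0)}A,
$$
where $\bar p\circ j_0$ is the canonical inclusion $A_0=\CC[h]\hookrightarrow A$ and $\delta=kk(j_1)kk(\iota)kk(j_0)^{-1}=kk(\iota\circ j_1)kk(j_0)^{-1}$. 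Everything then reduces to showing $\delta=0$.

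To compute $\delta$ I use that, by Proposition~\ref{LeftInv} together with the invertibility of $kk(j_0)$ from Theorem~\ref{toep_grado0}, $kk(j_0)^{-1}=kk(id,Ad(S\otimes 1))\kappa$. Since $\iota\circ j_1:A_1A_{-1}\to\mcT_A$ is the homomorphism $a\mapsto e_{00}\otimes a$, item~(3) of Proposition~\ref{quasi-homo} identifies $kk(\iota\circ j_1)kk(id,Ad(S\otimes 1))$ with the class of the quasihomomorphism $(\alpha_0,\alpha_1):A_1A_{-1}\rightrightarrows\mcT\otimes A\triangleright\mcC$ given by $\alpha_0(a)=e_{00}\otimes a$ and $\alpha_1(a)=Ad(S\otimes 1)(e_{00}\otimes a)=e_{11}\otimes a$, whose two legs both take values in $\mcC$ (indeed $A_1A_{-1}\subseteq A_0$ and $e_{11}\otimes A_1A_{-1}\subseteq\mcC$). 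Using the standard fact that a quasihomomorphism whose two legs both map into the ideal represents the difference of the corresponding classes, $\delta=(kk(\alpha_0)-kk(\alpha_1))\kappa$. I then trace the two legs through the Morita equivalence $\kappa:\mcC\to A_0$ of Proposition~\ref{Morita2}: $\alpha_0$ already lands in the corner $e_{00}\otimes A_0$ and is carried to the inclusion $\jmath:A_1A_{-1}\hookrightarrow A_0$, while for $\alpha_1$ the relevant compression $\eta_1(\,\cdot\,)\xi_1$ equals $e_{00}\otimes\mcU_{-1}(\,\cdot\,)\mcU_1$, and since $\mcU_{-1}\rho(a)\mcU_1=\rho(\sigma(a))$ by Lemma~\ref{relations-N-and-G}, $\alpha_1$ is carried to the twisted inclusion $a\mapsto\sigma(a)$. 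This gives
$$
\delta=kk(\jmath)-kk(\sigma|_{A_1A_{-1}})\qquad\text{in }kk^{alg}(A_1A_{-1},A_0).
$$
I expect this identification — keeping the three nested Toeplitz/matrix tensor factors and the Morita context straight, and invoking the ``difference of legs'' principle for quasihomomorphisms — to be the main obstacle.

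Granting it, the theorem follows at once. Restricted to the ideal $A_1A_{-1}=(P)\subseteq\CC[h]$, the automorphism $\sigma$ is diffotopic to the inclusion $\jmath$: one takes $\Phi_t(a)(h)=a(h-t)$ when $\sigma(h)=h-1$, and $\Phi_t(a)(h)=a(q_t h)$, for a smooth path $q_t$ in $\CC$ with $q_0=1$ and $q_1=q$, when $\sigma(h)=qh$. Each $\Phi_t$ is a homomorphism $(P)\to\CC[h]$, the family is smooth in $t$, and $\Phi_0=\jmath$, $\Phi_1=\sigma|_{A_1A_{-1}}$; after reparameterizing $[0,1]$ so that all derivatives vanish at the endpoints, this is a diffotopy. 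By diffotopy invariance $kk(\jmath)=kk(\sigma|_{A_1A_{-1}})$, hence $\delta=0$ and the triangle $SA\to A_1A_{-1}\xrightarrow{0}A_0\to A$ is exact.
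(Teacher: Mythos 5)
Your argument is correct and, up to the last step, is the paper's argument: reduce to the canonical forms of Propositions~\ref{classical_and_quantum} and~\ref{canonical_GWA}, take the extension triangle of $0\to\Lambda_A\to\mcT_A\to A\to 0$, replace $\Lambda_A$ and $\mcT_A$ by $A_1A_{-1}$ and $A_0$ via $kk(j_1)$ and $kk(j_0)$, and compute the resulting connecting class $\delta=kk(j_1)kk(\iota)kk(j_0)^{-1}$ by writing $kk(j_0)^{-1}=kk(id,Ad(S\otimes 1))\kappa$, splitting the orthogonal quasihomomorphism $(e_{00}\otimes(\cdot),\,e_{11}\otimes(\cdot))$ into a difference, and compressing through the Morita context of Proposition~\ref{Morita2} to get $\delta=kk(\jmath)-kk(\sigma|_{A_1A_{-1}})$; this is exactly the paper's identity~\eqref{diferencia}, including the computation $\mcU_{-1}\rho(a)\mcU_1=\rho(\sigma(a))$. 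Where you diverge is in proving $\delta=0$: the paper shows the two classes vanish \emph{separately}, because $\jmath$ and $\sigma$ both factor through ideals of the form $L(h)\CC[h]$ (with $L$ a linear factor of $P$ resp.\ $P\circ\sigma$), which are contractible --- this is the one place the nonconstancy of $P$ enters. You instead exhibit a diffotopy between $\jmath$ and $\sigma|_{(P)}$ by deforming the substitution ($h\mapsto h-t$, resp.\ $h\mapsto q_t h$); since each stage is the restriction to the ideal $(P)$ of an algebra endomorphism of $\CC[h]$, depends polynomially on $t$, and is automatically continuous for the fine topology, this is a legitimate diffotopy and gives $kk(\jmath)=kk(\sigma|_{A_1A_{-1}})$ directly. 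Your variant is arguably cleaner (it does not need $P$ to be nonconstant, and is consistent with the constant case of Proposition~\ref{invariants-exceptional-case-1}, where the paper instead argues $kk(\sigma)=1_{\CC[h]}$ via $ev_0$); the paper's version yields the slightly stronger statement that each of the two maps is already zero in $kk^{alg}$.
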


\begin{proof}
The linearly split extension
\begin{equation}\label{extension_toeplitz_algebra}
0 \to \Lambda_A \overset{\iota}\to  \mcT_A \overset{\bar p}\to A \to 0
\end{equation}
yields an exact triangle
$$
S A \overset{kk(E)}\to  \Lambda_A \overset{kk(\iota)}\to \mcT_A \overset{kk(\bar p)}\to A,
$$
where $kk(E)\in kk_1^{alg}(A, \Lambda_A)=kk_0^{alg}(S A, \Lambda_A)$ is the element defined by the extension (\ref{extension_toeplitz_algebra}).

By Theorem \ref{moritaker}, the inclusion $j_1:A_1A_{-1}\to \Lambda_A$ defined by $j_1(x)=e_{00}\otimes x$ induces an invertible element
$kk(j_1)\in kk_0^{alg}(A_1A_{-1}, \Lambda_{A})$. By Theorem ~\ref{toep_grado0}, the inclusion $j_0:A_0\to \mcT_A$ defined by $j_0(a)=1\otimes a$
induces an invertible element $kk(j_0)\in kk_0^{alg}(A_0, \mcT_A)$. We define $\phi$ by the commutative diagram in $\mathfrak{KK}^{alg}$
$$
\xymatrix{
\Lambda_A\ar[r]^{kk(\iota)} & \mcT_A\ar[d]^{kk(j_0)^{-1}}\\
A_1A_{-1}\ar[u]^{kk(j_1)}\ar[r]_{\phi} & A_0
}
$$
and claim that
\begin{equation}\label{diferencia}
  \phi= kk(i)-kk(\sigma).
\end{equation}
For this we use Proposition \ref{LeftInv} to obtain
$$kk(j_0)^{-1}=kk(id, Ad(S\otimes 1))\kappa$$
and therefore
$$
kk(j_1)kk(\iota)kk(j_0)^{-1}=kk(j_1)kk(\iota)kk(id, Ad(S\otimes 1))\kappa.
$$

Let $x=R(h)\in A_1A_{-1}\subseteq \CC[h]$. The product $kk(j_1)kk(\iota)kk(id, Ad(S\otimes 1))$
corresponds to the quasihomomorphism $(\phi, \psi): A_1A_{-1} \rightrightarrows \mcT\otimes A\triangleright \mcC$, where $\phi(x)=e_{00}\otimes x$ and
 $\psi(x)=e_{11}\otimes x$. Since $\phi$ and $\psi$ are orthogonal, we obtain $kk(\phi, \psi)=kk(\phi)-kk(\psi)$. Now we multiply this difference by $\kappa$
 which is given by the Morita equivalence of Proposition \ref{Morita2}. Thus we have that $kk(\phi)\kappa$ and $kk(\psi)\kappa$ are determined by maps
 $A_1A_{-1}\to \mcC\to \mcK\otimes A_0$ that send $x\mapsto e_{00}\otimes x$ and
 $x\mapsto e_{00}\otimes \rho^{-1}(\mcU_{-1}R(G)\mcU_{1})=e_{00}\otimes R(\sigma(h))$ (here we use the representation $\rho$ of Lemma \ref{representation}).
 Thus we conclude $\phi=kk(i)-kk(\sigma)$, proving~\eqref{diferencia}.

Now we prove that $\phi=0$. Both $i_1$ and $\sigma$ factor through a contractible subalgebra of $\CC[h]$. This is because we have $i_1(A_1A_{-1})=P(h)\CC[h]$
and $\sigma(A_1A_{-1})=P(\sigma(h))\CC[h]$ and the polynomials $P(h)$ and $P(\sigma(h))$ have some linear factors $L(h)$ and $L(\sigma(h))$. Thus the
morphisms $i_1$ and $\sigma$ factor through the subalgebras $L(h)\CC[h]$ and $L(\sigma(h))\CC[h]$ which are contractible. Therefore we have
$kk(i_1)=kk(\sigma)=0$.
\end{proof}

\begin{lem}\label{lemma_triangulated}
Let $(\mathfrak{T}, \Sigma)$ be a triangulated category. If there is an exact triangle
$$
\Sigma X\to Y \overset{0}\to  Z \to X
$$
then $X\cong Z\oplus \Sigma^{-1}Y$.
\end{lem}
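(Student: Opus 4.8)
The statement to prove is Lemma~\ref{lemma_triangulated}: in a triangulated category $(\mathfrak{T},\Sigma)$, an exact triangle $\Sigma X\to Y\overset{0}{\to} Z\to X$ forces $X\cong Z\oplus\Sigma^{-1}Y$. This is a purely formal consequence of the triangulated axioms, so the plan is to exploit the fact that one of the maps in the triangle is zero and invoke the standard splitting lemma for triangulated categories.

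\medskip

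The plan is as follows. First I would recall that in any triangulated category, rotating an exact triangle gives an exact triangle, so from $\Sigma X\xrightarrow{a} Y\xrightarrow{0} Z\xrightarrow{b} X$ I obtain the rotated exact triangle $Y\xrightarrow{0} Z\xrightarrow{b} X\xrightarrow{-\Sigma a} \Sigma Y$ (up to the usual sign, which is irrelevant for isomorphism statements). Now I would use the elementary lemma that if the first morphism in an exact triangle $U\xrightarrow{0} V\to W\to \Sigma U$ is zero, then the triangle splits: concretely, $V\to W$ is a split monomorphism and $W\to \Sigma U$ is a split epimorphism, whence $W\cong V\oplus \Sigma U$. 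Applying this with $U=Y$, $V=Z$, $W=X$ gives $X\cong Z\oplus\Sigma Y$... wait — that would be $\Sigma Y$, not $\Sigma^{-1}Y$. So instead I should rotate the other way: from $\Sigma X\xrightarrow{a} Y\xrightarrow{0} Z\xrightarrow{b} X$, rotate backwards to get $\Sigma^{-1}Y\xrightarrow{0} \Sigma^{-1}Z\to \Sigma^{-1}(\ldots)$ — this is getting delicate with the indices. The clean approach: apply the splitting lemma directly to the triangle as given. The triangle $\Sigma X\xrightarrow{a} Y\xrightarrow{0} Z\xrightarrow{b} X$ has its \emph{second} morphism zero; rotating once forward puts a zero in first position: $Y\xrightarrow{0} Z\xrightarrow{b} X\xrightarrow{-\Sigma a}\Sigma Y$, giving $X\cong Z\oplus\Sigma Y$. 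Hmm, that's still $\Sigma Y$. Let me instead rotate \emph{backwards}: the triangle $\Sigma X\xrightarrow{a}Y\xrightarrow{0}Z\xrightarrow{b}X$ is the rotation of $\Sigma^{-1}Z\xrightarrow{-\Sigma^{-1}b}X\xrightarrow{a}Y\xrightarrow{0}Z$ — no. Let me just be careful once: a triangle $U\xrightarrow{f}V\xrightarrow{g}W\xrightarrow{h}\Sigma U$ rotates forward to $V\xrightarrow{g}W\xrightarrow{h}\Sigma U\xrightarrow{-\Sigma f}\Sigma V$ and backward to $\Sigma^{-1}W\xrightarrow{-\Sigma^{-1}h}U\xrightarrow{f}V\xrightarrow{g}W$. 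Starting from $\Sigma X\xrightarrow{a}Y\xrightarrow{0}Z\xrightarrow{b}X$ and rotating \emph{backward}, I get $\Sigma^{-1}Z\xrightarrow{-\Sigma^{-1}b}\Sigma X\xrightarrow{a}Y\xrightarrow{0}Z$, which still has the zero in third position. Rotating backward once more: $\Sigma^{-1}Y\xrightarrow{0}\Sigma^{-1}Z\xrightarrow{-\Sigma^{-1}b}\Sigma X\xrightarrow{a}Y$. Now the first morphism is $0$, so the splitting lemma gives $\Sigma X\cong \Sigma^{-1}Z\oplus \Sigma(\Sigma^{-1}Y)=\Sigma^{-1}Z\oplus Y$. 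Applying $\Sigma^{-1}$ (an equivalence) yields $X\cong \Sigma^{-2}Z\oplus \Sigma^{-1}Y$. That's not right either — the statement claims $Z$, not $\Sigma^{-2}Z$; but note that in our category $S^2\cong\mathrm{id}$ by Bott periodicity, so this is fine for the application. However, I should prove the lemma as stated in a general triangulated category, so let me reconsider: perhaps the intended reading uses $\Sigma^{-1}$ loosely, or one just applies the splitting lemma to the forward rotation and writes the answer as $Z\oplus\Sigma Y\cong Z\oplus\Sigma^{-1}Y$ only when $\Sigma^2\cong\mathrm{id}$. I would state the proof using the rotation that lands the zero morphism in first position and conclude directly; the indexing in the statement matches because the paper works in $(\mathfrak{KK}^{alg},S)$ with $S^2\cong\mathrm{id}$.

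\medskip

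So, concretely, the proof I would write: rotate the given triangle so that $0$ becomes the connecting map, obtaining an exact triangle of the form $P\xrightarrow{0}Q\xrightarrow{u}R\xrightarrow{v}\Sigma P$ with $\{P,Q,R\}$ the appropriate shifts of $\{X,Y,Z\}$. Since the first map is $0$, apply $\mathrm{Hom}(R,-)$ to the triangle: the long exact sequence shows $u_*:\mathrm{Hom}(R,Q)\to\mathrm{Hom}(R,R)$ is surjective (because the connecting map contributes $0$), so there is $w:R\to Q$ with $uw=\mathrm{id}_R$; then $u$ is a split mono, $v$ is a split epi, and a standard argument (or the direct-sum axiom for triangulated categories: a triangle with a split map is isomorphic to a direct-sum triangle) gives $R\cong Q\oplus\Sigma P$. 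Translating back through the rotation (and using $S^{2}\cong\mathrm{id}$ if needed) yields $X\cong Z\oplus\Sigma^{-1}Y$.

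\medskip

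The main obstacle is purely bookkeeping: getting the rotation directions and the resulting index shifts exactly right so that the conclusion reads $Z\oplus\Sigma^{-1}Y$ rather than some other shift, and justifying the splitting step cleanly — either by citing the standard fact that in a triangulated category a split monomorphism fits into a split triangle, or by the short $\mathrm{Hom}$-functor argument above. There is no deep content; the only subtlety is making sure the statement is used in a context where the identification of shifts is legitimate, which it is here since $\mathfrak{KK}^{alg}$ satisfies $S^2\cong\mathrm{id}$ by Theorem~\ref{bott_periodicity}.
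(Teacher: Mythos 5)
Your approach is essentially the paper's: the paper's entire proof is a citation of Corollary 1.2.7 in Neeman's \emph{Triangulated Categories}, which is precisely the splitting lemma you set out to reprove (a distinguished triangle containing a zero morphism is isomorphic to a direct-sum triangle). Two bookkeeping corrections, though. First, your $\mathrm{Hom}(R,-)$ step is mis-aimed: for the triangle $P\xrightarrow{0}Q\xrightarrow{u}R\xrightarrow{v}\Sigma P$, exactness of $\mathrm{Hom}(R,Q)\xrightarrow{u_*}\mathrm{Hom}(R,R)\xrightarrow{v_*}\mathrm{Hom}(R,\Sigma P)$ does \emph{not} make $u_*$ surjective --- hitting $\mathrm{id}_R$ would require $v=0$, which is not among the hypotheses. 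What the vanishing of the first map actually gives is surjectivity of $u^*:\mathrm{Hom}(R,Q)\to\mathrm{Hom}(Q,Q)$ (apply $\mathrm{Hom}(-,Q)$), so $u$ is a split monomorphism with retraction $r$ satisfying $ru=\mathrm{id}_Q$, and surjectivity of $v_*:\mathrm{Hom}(\Sigma P,R)\to\mathrm{Hom}(\Sigma P,\Sigma P)$, so $v$ is a split epimorphism; the comparison with the direct-sum triangle then yields $R\cong Q\oplus\Sigma P$ as you intend. (Note also that an identity $uw=\mathrm{id}_R$ would make $u$ a split epi, not a split mono, so your sentence is internally inconsistent as written.) Second, you are right to worry that the literal translation produces $Z\oplus\Sigma Y$ rather than $Z\oplus\Sigma^{-1}Y$: the paper's convention of writing exact triangles as $\Sigma W\to U\to V\to W$ already presupposes the identification $S\cong S^{-1}$ coming from Bott periodicity (Theorem~\ref{bott_periodicity}), so in $(\mathfrak{KK}^{alg},S)$ the discrepancy is purely notational; your resolution of it is correct.
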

\begin{proof}
See Corollary 1.2.7 in \cite{MR1812507}.
\end{proof}

Now we compute the isomorphism class of $A_1A_{-1}$ in $\mathfrak{KK}^{alg}$.

\begin{prop}\label{invariants_A1A-1}
Let $A=\CC[h](\sigma, P)$ where $P$ is a nonconstant polynomial with $r$ different roots, then
$$A_1A_{-1}\cong_{\mathfrak{KK}^{alg}} S\CC^{r-1}.$$
\end{prop}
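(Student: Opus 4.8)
The plan is to reduce everything to the $\mathfrak{KK}^{alg}$-class of a principal ideal of $\CC[h]$. By Lemma~\ref{GWA-grading} we have $A_1=\CC[h]y$ and $A_{-1}=\CC[h]x$, and since $f(h)y\cdot g(h)x=f(h)\,\sigma^{-1}(g)(h)\,P(h)$ with $f\mapsto f\sigma^{-1}(g)$ surjective onto $\CC[h]$, the subspace $A_1A_{-1}$ is exactly the ideal $P\CC[h]\subseteq A_0=\CC[h]$ (as already observed in Remark~\ref{GWA_not_tame_smooth}). Thus it suffices to prove that $P\CC[h]\cong_{\mathfrak{KK}^{alg}}S\CC^{r-1}$ whenever $P$ is a nonconstant polynomial with $r$ distinct roots $\lambda_1,\dots,\lambda_r$ of multiplicities $m_1,\dots,m_r$.

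The central device is the linearly split extension
$$0\to P\CC[h]\to\CC[h]\xrightarrow{g}\CC[h]/(P)\to 0$$
(the quotient is finite dimensional, so a continuous linear section exists), which yields an exact triangle in $(\mathfrak{KK}^{alg},S)$ that, after rotation, takes the form
$$\CC[h]\xrightarrow{g}\CC[h]/(P)\to S\big(P\CC[h]\big)\to S\CC[h].$$
By the Chinese Remainder Theorem $\CC[h]/(P)\cong\bigoplus_{i=1}^{r}\CC[h]/(h-\lambda_i)^{m_i}$, and after the algebra automorphism $h\mapsto h+\lambda_i$ each summand is $\NN$-graded with degree-zero part $\CC\cdot1$; hence by Lemma~\ref{N-graded} its unit inclusion $\CC\hookrightarrow\CC[h]/(h-\lambda_i)^{m_i}$ is an isomorphism in $\mathfrak{KK}^{alg}$, and likewise $\CC\hookrightarrow\CC[h]$ is one. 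Transporting the triangle along these isomorphisms turns it into
$$\CC\xrightarrow{\bar g}\CC^{r}\to S\big(P\CC[h]\big)\to S\CC,$$
and — this is the crucial point — $\bar g$ is the diagonal morphism $\Delta\colon 1\mapsto(1,\dots,1)$, because the composite $\CC\xrightarrow{\mathrm{unit}}\CC[h]\xrightarrow{g}\CC[h]/(h-\lambda_i)^{m_i}$ is again the unit inclusion, i.e.\ precisely the chosen isomorphism for the $i$-th factor.

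It then remains to read off $S(P\CC[h])$ as the third vertex of a triangle on $\Delta$. The integer matrix $\Psi(a_1,\dots,a_r)=(a_1,a_2-a_1,\dots,a_r-a_1)$ is an automorphism of $\CC^{r}$ in $\mathfrak{KK}^{alg}$, and $\Psi\circ\Delta$ equals the inclusion $\iota_1\colon\CC\to\CC\oplus\CC^{r-1}$ of the first summand; so the triangle above is isomorphic to one built on $\iota_1$. The latter is the direct sum of the exact triangle $\CC\xrightarrow{\mathrm{id}}\CC\to 0\to S\CC$ and the exact triangle $0\to\CC^{r-1}\xrightarrow{\mathrm{id}}\CC^{r-1}\to 0$, so its third vertex is $\CC^{r-1}$; by uniqueness of the third vertex, $S(P\CC[h])\cong_{\mathfrak{KK}^{alg}}\CC^{r-1}$. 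Applying $S$ and Bott periodicity $S^{2}\cong\mathrm{id}$ (Theorem~\ref{bott_periodicity}) gives $A_1A_{-1}=P\CC[h]\cong_{\mathfrak{KK}^{alg}}S\CC^{r-1}$. (The case $r=1$ degenerates: $P\CC[h]=(h-\lambda_1)^{m_1}\CC[h]$ is, after translation, $\NN$-graded with trivial degree-zero part, hence $\mathfrak{KK}^{alg}$-contractible, matching $S\CC^{0}$.)

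I expect the main obstacle to be the identification of $g$ with the diagonal map in $\mathfrak{KK}^{alg}$: recognizing that each projection onto a local Artinian factor $\CC[h]/(h-\lambda_i)^{m_i}$ is a $\mathfrak{KK}^{alg}$-isomorphism realized by the unit inclusion, and then handling the triangulated-category bookkeeping (cf.\ \cite{MR1812507}) that extracts the third vertex of the triangle on $\Delta$, together with the single use of Bott periodicity to pass from $S(P\CC[h])$ back to $P\CC[h]$. An alternative that sidesteps the explicit cone is an induction on $r$, peeling off one root via $0\to P\CC[h]\to Q\CC[h]\to Q\CC[h]/P\CC[h]\to 0$ with $Q=P/(h-\lambda_r)^{m_r}$ and $Q\CC[h]/P\CC[h]\cong\CC[h]/(h-\lambda_r)^{m_r}\cong_{\mathfrak{KK}^{alg}}\CC$, followed by Lemma~\ref{lemma_triangulated}; that route instead requires the vanishing of the connecting morphism $Q\CC[h]\to\CC[h]/(h-\lambda_r)^{m_r}$ in $\mathfrak{KK}^{alg}$, which is why the first approach is more self-contained.
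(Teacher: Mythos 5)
Your proof is correct and follows the paper's argument in all essentials: the same linearly split extension $0\to(P)\to\CC[h]\to\CC[h]/(P)\to0$, the Chinese Remainder decomposition, the identification of the induced morphism $\CC\to\CC^{r}$ with the diagonal, and the extraction of the third vertex of the triangle on the diagonal. The only (harmless) variations are that you establish $\CC[h]/(h-\lambda_i)^{m_i}\cong_{\mathfrak{KK}^{alg}}\CC$ via $\NN$-gradedness and Lemma~\ref{N-graded} where the paper compares the extensions by the contractible ideals $(h-\lambda_i)^{n_i}\CC[h]$ and $(h-\lambda_i)\CC[h]$, and that you compute the cone of $\triangle$ by conjugating with an invertible integer matrix rather than by rotating the extension triangle of $0\to\CC\overset{\triangle}\to\CC^{r}\to\CC^{r-1}\to0$ and invoking TR3 as the paper does.
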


\begin{proof}
Let $P(h)=c(h-h_1)^{n_1}\dots (h-h_r)^{n_r}$. Without loss of generality we can assume $c=1$. Since $A_1A_{-1}=(P(h))$ we have a linearly split extension
\begin{equation}\label{sequence_computations_1}
0\to A_1A_{-1}\to \CC[h]\overset{\pi}\to  \CC[h]/(P(h))\to 0.
\end{equation}
By the Chinese Reminder Theorem, there is an isomorphism
$$\phi: \CC[h]/(P(h))\to \prod_{i=1}^{r}  \CC[h]/(h-h_i)^{n_i}.$$

We have the following commutative diagram
$$
\xymatrix{
0\ar[r] & (h-h_i)^{n_i}\CC[h]\ar[r]\ar[d] & \CC[h]\ar[r]^{q_{i}}\ar[d]_{=} & \CC[h]/(h-h_i)^{n_i}\ar[r]\ar[d]_{\mu_i} & 0\\
0\ar[r] & (h-h_i)\CC[h]\ar[r] & \CC[h]\ar[r]_{ev_{h_i}} & \CC\ar[r] & 0
}
$$
Since $(h-h_i)^{n_i}\CC[h]$ and $(h-h_i)\CC[h]$ are contractible, $kk(q_i)$ and $kk(ev_{h_i})$ are invertible, therefore
$kk(\mu_i)\in kk_0^{alg}( \CC[h]/(h-h_i)^{n_i}, \CC)$ is invertible. By the additivity of $\mathfrak{KK}^{alg}$, the homomorphism
$\mu:\prod_{i=1}^{r}  \CC[h]/(h-h_i)^{n_i}\to \CC^{r}$
given by $\mu_i$ in the $i$-th component induces an invertible element $kk(\mu)$. Note that $\mu\circ \pi:\CC[h]\to \CC^{r}$ is given by $ev_{h_i}$ in the
$i$-th component.

Since all evaluation maps $ev_{h_i}$ induce the same $kk^{alg}$-isomorphism ${kk(ev_0)}$ in $kk^{alg}(\CC[h],\CC)$, we have the commutative diagram in
$\mathfrak{KK}^{alg}$
$$
\xymatrix
{
 \CC[h]\ar[r]^{kk(\pi)\ \ \ \ }\ar[d]_{kk(ev_0)} & \CC[h]/P(h)\ar[d]^{kk(\mu)}\\
\CC \ar[r]_{kk(\triangle)}& \CC^{r}
}
$$
where $\triangle:\CC\to\CC^{r}$ is the diagonal morphism $\triangle(1)=(1, \dots, 1)$. Replacing $\CC[h]$ by $\CC$ and $\CC[h]/(P(h))$ by $\CC^r$ in
the exact triangle corresponding to~\eqref{sequence_computations_1},
 we obtain an exact triangle in $\mathfrak{KK}^{alg}$
\begin{equation}\label{triangle_1}
S \CC^{r}\to A_1A_{-1}\to \CC \overset{kk(\triangle)}\to  \CC^{r}.
\end{equation}
The linearly split extension $ 0 \to  \CC \overset{\triangle}\to \CC^{r} \to \CC^{r-1} \to 0$ yields an exact triangle
$$
S \CC^{r-1} \to \CC \overset{kk(\triangle)}\to \CC^{r} \to \CC^{r-1}.
$$
Permuting this triangle we obtain the exact triangle
\begin{equation}\label{triangle_2}
S \CC^{r} \to S \CC^{r-1} \to  \CC \overset{kk(\triangle)}\to \CC^{r}.
\end{equation}
Since both triangles (\ref{triangle_1}) and (\ref{triangle_2}) complete the morphism $kk(\triangle):\CC\to\CC^{r}$, by the axiom TR3 of triangulated categories
we have $A_1A_{-1} \cong_{\mathfrak{KK}^{alg}} S\CC^{r-1}$.
\end{proof}

\begin{thm}\label{main_result}
Let $A=\CC[h](\sigma, P(h))$ be generalized Weyl with $\sigma(h)=qh+h_0$ and $P$ a non constant polynomial such that
\begin{itemize}
\item $q=1$ and $h_0\neq 0$ or
\item $q$ is not a root of unity and $P$ has a root different from $\frac{h_0}{1-q}$,
\end{itemize}
then $A\cong_{\mathfrak{KK}^{alg}} \CC^{r}$.
\end{thm}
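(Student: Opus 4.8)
The plan is purely to assemble the results already in hand. By Theorem~\ref{exact_triangle}, in either of the two cases listed in the statement there is an exact triangle
$$SA \to A_1A_{-1}\overset{0}{\to} A_0\to A$$
in the triangulated category $(\mathfrak{KK}^{alg}, S)$. Applying Lemma~\ref{lemma_triangulated} with $(\mathfrak{T},\Sigma)=(\mathfrak{KK}^{alg}, S)$, $X=A$, $Y=A_1A_{-1}$, $Z=A_0$, I would conclude
$$A\cong_{\mathfrak{KK}^{alg}} A_0\oplus S^{-1}(A_1A_{-1}).$$

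Next I would substitute the two summands. Since $P$ has $r$ distinct roots, Proposition~\ref{invariants_A1A-1} gives $A_1A_{-1}\cong_{\mathfrak{KK}^{alg}} S\CC^{r-1}$; as $S$ is an automorphism of $\mathfrak{KK}^{alg}$ (Theorem~\ref{bott_periodicity}), it respects $\mathfrak{KK}^{alg}$-isomorphism, so $S^{-1}(A_1A_{-1})\cong_{\mathfrak{KK}^{alg}} S^{-1}S\CC^{r-1}=\CC^{r-1}$. For the other summand, $A_0=\CC[h]$ is $\NN$-graded, hence $A_0\cong_{\mathfrak{KK}^{alg}}\CC$ by Lemma~\ref{N-graded}. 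Using additivity of $\mathfrak{KK}^{alg}$ (so that $S^{-1}$ commutes with finite direct sums), these identifications combine to
$$A\cong_{\mathfrak{KK}^{alg}} \CC\oplus\CC^{r-1}=\CC^{r},$$
which is the desired conclusion.

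There is no genuine obstacle remaining at this stage: the substantive work was already carried out in the preceding results. In particular, the real difficulty is hidden inside Theorem~\ref{exact_triangle}, whose proof uses the explicit faithful representations of Lemma~\ref{representation}, the Morita equivalences $\Lambda_A\cong_{\mathfrak{KK}^{alg}} A_1A_{-1}$ (Theorem~\ref{moritaker}) and $\mcT_A\cong_{\mathfrak{KK}^{alg}} A_0$ (Theorem~\ref{toep_grado0}), and the fact that the connecting map $\Lambda_A\to\mcT_A$ transports to $kk(i_1)-kk(\sigma)$, which vanishes because $P(h)$ and $P(\sigma(h))$ each admit a linear factor and so the relevant maps factor through contractible ideals of $\CC[h]$. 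The only point requiring attention in the present argument is verifying that the hypotheses match: Theorem~\ref{exact_triangle} applies in exactly the two cases in the statement, and Proposition~\ref{invariants_A1A-1} needs only that $P$ be nonconstant with $r$ distinct roots, which is assumed. Hence the theorem follows immediately.
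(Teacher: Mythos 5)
Your proposal is correct and follows exactly the paper's own argument, which likewise cites Theorem~\ref{exact_triangle}, Lemma~\ref{lemma_triangulated} and Proposition~\ref{invariants_A1A-1} (together with Lemma~\ref{N-graded} for $A_0\cong_{\mathfrak{KK}^{alg}}\CC$ and Bott periodicity to cancel $S^{-1}S$). You have simply written out the substitutions that the paper leaves implicit; there is nothing to correct.
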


\begin{proof}
The result follows from Theorem \ref{exact_triangle}, Lemma \ref{lemma_triangulated} and Proposition \ref{invariants_A1A-1}.
\end{proof}

\begin{cor}\label{result_in_Lp}
Let $A$ be as in Theorem \ref{main_result}. Then $A\cong \CC^{r}$ in $\mathfrak{KK}^{\mathcal{L}_p}$ and so
$$
\qquad\qquad\qquad\qquad\qquad kk_0^{\mathcal{L}_p}(\CC, A)=\ZZ^{r} \quad \text{and} \quad kk_1^{\mathcal{L}_p}(\CC, A)=0.\qquad\qquad\qquad\qquad\qquad\qed
$$
\end{cor}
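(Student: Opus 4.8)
The plan is to transport the $\mathfrak{KK}^{alg}$-isomorphism $A\cong\CC^r$ of Theorem~\ref{main_result} into $\mathfrak{KK}^{\mathcal{L}_p}$ and then read off the two groups from the coefficient computation recalled in Section~\ref{kk-schatten-ideals}. First I would invoke the universal property of $kk^{alg}$: since $kk_*^{\mathcal{L}_p}\colon\mathfrak{lca}\to\mathfrak{KK}^{\mathcal{L}_p}$ is diffotopy invariant, half exact for linearly split extensions and $\mcK$-stable (Lemma~7.20 in~\cite{MR2340673}), there is a functor $\mathfrak{KK}^{alg}\to\mathfrak{KK}^{\mathcal{L}_p}$, as already noted at the end of Section~\ref{kk-schatten-ideals}. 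A functor carries isomorphisms to isomorphisms, so the isomorphism $A\cong_{\mathfrak{KK}^{alg}}\CC^r$ gives $A\cong_{\mathfrak{KK}^{\mathcal{L}_p}}\CC^r$, i.e. $kk_*^{\mathcal{L}_p}(\CC,A)\cong kk_*^{\mathcal{L}_p}(\CC,\CC^r)$ as graded groups.

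Next I would use additivity: $kk^{\mathcal{L}_p}$ is additive in each variable (this follows from half-exactness applied to the split extensions $0\to B\to B\oplus C\to C\to 0$), so $kk_n^{\mathcal{L}_p}(\CC,\CC^r)\cong kk_n^{\mathcal{L}_p}(\CC,\CC)^{\oplus r}$ for every $n\in\ZZ$. By the Corollary of~\cite{MR2207702} quoted in the excerpt, the coefficient ring $kk_*^{\mathcal{L}_p}(\CC,\CC)$ is $\ZZ[u,u^{-1}]$ with $\deg u=2$; in particular $kk_0^{\mathcal{L}_p}(\CC,\CC)=\ZZ$ and $kk_1^{\mathcal{L}_p}(\CC,\CC)=0$. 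Combining, $kk_0^{\mathcal{L}_p}(\CC,A)\cong\ZZ^r$ and $kk_1^{\mathcal{L}_p}(\CC,A)=0$.

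There is essentially no obstacle here: the only point requiring a moment of care is that the functor $\mathfrak{KK}^{alg}\to\mathfrak{KK}^{\mathcal{L}_p}$ is genuinely a functor of (triangulated, additive) categories, so that both the isomorphism and the additive splitting $\CC^r=\bigoplus_{i=1}^r\CC$ are preserved; this is exactly what the universal property of $kk^{alg}$ provides, and it was already recorded in the discussion preceding Definition~\ref{def kk Lp}. Everything else is a bookkeeping computation with the coefficient ring.
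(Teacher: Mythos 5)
Your proposal is correct and is exactly the argument the paper intends (the corollary is stated with a \qed precisely because it follows by applying the functor $\mathfrak{KK}^{alg}\to\mathfrak{KK}^{\mathcal{L}_p}$ from Section~\ref{kk-schatten-ideals} to Theorem~\ref{main_result} and reading off the coefficient ring $kk_*^{\mathcal{L}_p}(\CC,\CC)=\ZZ[u,u^{-1}]$). Your added remark on additivity over the factors of $\CC^r$ is a reasonable explicit justification of a step the paper leaves implicit.
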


Corollary \ref{result_in_Lp} implies $K_0(A\otimes_{\pi}\mathcal{L}_p)=\ZZ^{r}$. This is compatible with Theorem 4.5 of \cite{MR1247356},
 which computes $K_0(A)=\ZZ^{r}$ for $A=\CC[h](\sigma, P)$ when $\sigma(h)=h-1$ and $P$ has $r$ simple roots.

\begin{exs} We apply Theorem \ref{main_result} in the following cases.
\begin{enumerate}
\item The quantum Weyl algebra $A_q$ with $q\neq 1$ not a root of unity, is isomorphic to $\CC$ in $\mathfrak{KK}^{alg}$.
\item In the case of the primitive factors $B_{\lambda}$ of $U(\mathfrak{sl}_2)$, we have $P(h)=-h(h+1)-\lambda/4$. If $\lambda= 1$, then
$B_{\lambda}\cong \CC$ in $\mathfrak{KK}^{alg}$. If $\lambda\neq 1$, then $B_{\lambda}\cong \CC^{2}$ in $\mathfrak{KK}^{alg}$. This implies
 $kk_0^{\mathcal{L}_p}(\CC, B_{\lambda})=\ZZ\oplus \ZZ$ and $kk_1^{\mathcal{L}_p}(\CC, B_{\lambda})=0$.
\item The quantum weighted projective line  $\mathcal{O}(\mathbb{WP}_q(k, l))$ is  isomorphic to $\CC[h](\sigma, P)$ with $\sigma(h)=q^{2l}h$ and
$$P(h)=h^{k}\prod_{i=0}^{l-1}(1-q^{-2i}h).$$
In the case $q\neq 1$ is not a root of unity, we have $\mathcal{O}(\mathbb{WP}_q(k, l))\cong \CC^{l+1}$ in
$\mathfrak{KK}^{alg}$. This implies $kk_0^{\mathcal{L}_p}(\CC,\mathcal{O}(\mathbb{WP}_q(k, l)) )=\ZZ^{l+1}$ and
$kk_1^{\mathcal{L}_p}(\CC,\mathcal{O}(\mathbb{WP}_q(k, l)) )=0$. (Compare with Corollary 5.3 of \cite{MR2989456}.)
\end{enumerate}
\end{exs}

In the case where $q$ is not a root of unity and $P$ has only $\frac{h_0}{1-q}$ as a root we have the following result.

\begin{prop}\label{case4}
The generalized Weyl algebra $A=\CC[h](\sigma, P(h))$, with $\sigma(h)=qh+h_0$ such that $q\neq 1$ and $P$ has only $\frac{h_0}{1-q}$ as a root, is isomorphic
to $\CC$ in $\mathfrak{KK}^{alg}$.
\end{prop}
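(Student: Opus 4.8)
The plan is to exhibit $A$ as an $\NN$-graded locally convex algebra with $A_0=\CC$ and then invoke Lemma~\ref{N-graded}. The point is that the hypothesis on the roots of $P$ is exactly what makes the defining data of $A$ (after normalization) homogeneous for a suitable weighting.

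First I would normalize $A$. Since $q\neq 1$, Proposition~\ref{classical_and_quantum}(3) provides an isomorphism $A\cong \CC[h](\sigma_1,P_1)$ with $\sigma_1(h)=qh$ and $P_1(h)=P\bigl(h+\tfrac{h_0}{1-q}\bigr)$. By hypothesis $\tfrac{h_0}{1-q}$ is the only root of the non-constant polynomial $P$, so $P_1(h)=c\,h^n$ for some $c\neq 0$ and some integer $n\geq 1$. Hence we may assume $A=\CC[h](\sigma_1,c\,h^n)$, with defining relations $xh=qhx$, $yh=q^{-1}hy$, $yx=c\,h^n$ and $xy=c\,q^n h^n$.

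Next I would put on $A$ the grading determined by $\deg h=2$ and $\deg x=\deg y=n$. All four defining relations are then homogeneous — the first two of degree $n+2$ and the last two of degree $2n$ — so this is a genuine grading, and since $n\geq 1$ it takes values in $\NN$. Using the basis of $A$ from Lemma~\ref{GWA-grading}, namely $\{h^k\}_{k\geq 0}\cup\{h^k y^m\}_{k\geq 0,\,m\geq 1}\cup\{h^k x^m\}_{k\geq 0,\,m\geq 1}$, whose elements have degrees $2k$, $2k+mn$ and $2k+mn$ respectively, the only basis element of degree $0$ is $1$; thus $A_0=\CC$. With the fine topology the algebraic decomposition $A=\bigoplus_{k\in\NN}A_k$ is a direct sum of locally convex spaces, so $A$ is an $\NN$-graded locally convex algebra.

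Then Lemma~\ref{N-graded} gives a diffotopy equivalence $A\simeq A_0=\CC$, and since $kk^{alg}$ is diffotopy invariant this yields $A\cong_{\mathfrak{KK}^{alg}}\CC$. There is really only one place where care is needed, namely that the conjugation in the first step is essential: as long as $h_0\neq 0$ the relation $xh=(qh+h_0)x$ fails to be homogeneous for any grading with $\deg h\neq 0$, so one cannot make $A_0$ proper; once $h_0$ has been absorbed, the fact that $P_1=c\,h^n$ is a monomial makes the weighted grading automatic and the remainder of the argument is bookkeeping. (Note also that this case is genuinely outside the scope of Lemma~\ref{representation}, since there $P_1(1)=c\neq 0$, which is why it is handled separately here.)
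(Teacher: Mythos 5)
Your proof is correct and follows essentially the same route as the paper: normalize via Proposition~\ref{classical_and_quantum}(3) so that $P_1(h)=c\,h^n$, equip $A$ with the $\NN$-grading $\deg h=2$, $\deg x=\deg y=n$, observe $A_0=\CC$, and conclude by Lemma~\ref{N-graded}. The extra verifications you include (homogeneity degrees of the relations and the basis computation of $A_0$) are exactly the bookkeeping the paper leaves implicit.
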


\begin{proof}
By Proposition \ref{classical_and_quantum}, $A$ is isomorphic to $\CC[h](\sigma_1, P_1)$ with $\sigma_1(h)=qh$ and $P_1(h)=ch^{n}$  with $c\in\CC^{*}$ and
$n\geq 1$. The algebra $\CC[h](\sigma_1, P_1)$ is $\NN$ graded with $\deg h=2$, $\deg x=n$ and $\deg y=n$. To prove this we check that the defining relations
$$xh=qhx,\  yh=q^{-1}hy,\ yx=ch^{n} \text{ and } xy=cq^{n}h^{n}$$
are compatible with the grading.

The result follows from Lemma \ref{N-graded}, since the degree $0$ subalgebra of $A$ is equal to $\CC$.
\end{proof}

\begin{ex}
The quantum plane $\CC[h](\sigma, h)$ with $\sigma(h)=qh$ is isomorphic to $\CC$ in $\mathfrak{KK}^{alg}$.
\end{ex}


\subsection{The case where P is a constant polynomial}\label{case2}\

If $P$ is a nonzero constant polynomial, then $A=\CC[h](\sigma, P)$ is a tame smooth generalized crossed product and we can apply the results
from~\cite{MR3054304}.

\begin{prop}\label{invariants-exceptional-case-1}
Let $A=\CC[h](\sigma, P)$ where $P\neq 0$ is a constant polynomial, then $A\cong S\CC\times \CC$ in $\mathfrak{KK}^{alg}$. This implies
$A\cong S\CC\times \CC$ in $\mathfrak{KK}^{\mathcal{L}_p}$ and therefore we have $kk_0^{\mathcal{L}_p}(\CC, A)=\ZZ$ and $kk_1^{\mathcal{L}_p}(\CC, A)=\ZZ$.
\end{prop}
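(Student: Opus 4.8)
The plan is to use that a nonzero constant $P$ makes $A$ tame smooth, so that the Pimsner--Voiculescu-type machinery of \cite{MR3054304} applies and produces a two-term exact triangle whose connecting map turns out to vanish. First I would record that $A=\CC[h](\sigma,P)$ with $P\in\CC^{*}$ is tame smooth: since $P$ is constant, $y^{*}y=xy=\sigma(P)=P$ is invertible in $A_0=\CC[h]$, so $y$ is a frame in degree $1$ (cf. Remark~\ref{GWA_not_tame_smooth}), and in particular $A_1A_{-1}=A_0$. Then the results of \cite{MR3054304} apply directly and give an exact triangle
$$SA\to A_0\overset{\delta}{\to}A_0\to A$$
in $(\mathfrak{KK}^{alg},S)$; computing the connecting map as in the proof of Theorem~\ref{exact_triangle} (the inclusion $A_1A_{-1}\hookrightarrow A_0$ is now the identity) gives $\delta=1_{\CC[h]}-kk(\sigma)$, with $\sigma$ the automorphism $h\mapsto qh+h_0$ of $\CC[h]$. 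One could alternatively start from the extension $0\to\Lambda_A\to\mcT_A\to A\to0$ of Proposition~\ref{Toeplitzext}, where now $\Lambda_A=\bigoplus_{i,j\ge0}e_{i,j}\otimes\CC[h]=\mcK\otimes\CC[h]$, so that $\Lambda_A\cong_{\mathfrak{KK}^{alg}}A_0$ by $\mcK$-stability.

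The main point is then to show $kk(\sigma)=1_{\CC[h]}$, so that $\delta=0$. For this I would use the evaluation homomorphisms $ev_a\colon\CC[h]\to\CC$, $f\mapsto f(a)$. All of them are diffotopic — the diffotopy $f\mapsto\bigl(t\mapsto f(ta)\bigr)$ connects $ev_0$ and $ev_a$ — hence induce a single class $kk(ev_0)\in kk^{alg}(\CC[h],\CC)$, which is moreover invertible because $\CC[h]$ is $\NN$-graded with degree-zero part $\CC$ (Lemma~\ref{N-graded}). Since $ev_0\circ\sigma=ev_{h_0}$ we get $kk(\sigma)\,kk(ev_0)=kk(ev_0\circ\sigma)=kk(ev_{h_0})=kk(ev_0)$; cancelling the invertible element $kk(ev_0)$ yields $kk(\sigma)=1_{\CC[h]}$, and therefore $\delta=0$.

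With $\delta=0$, Lemma~\ref{lemma_triangulated} applied to $SA\to A_0\overset{0}{\to}A_0\to A$ gives $A\cong_{\mathfrak{KK}^{alg}}A_0\oplus S A_0$, and since $A_0=\CC[h]\cong_{\mathfrak{KK}^{alg}}\CC$ by Lemma~\ref{N-graded} we obtain $A\cong_{\mathfrak{KK}^{alg}}\CC\oplus S\CC=S\CC\times\CC$. Pushing this isomorphism through the functor $\mathfrak{KK}^{alg}\to\mathfrak{KK}^{\mathcal{L}_p}$ gives $A\cong S\CC\times\CC$ in $\mathfrak{KK}^{\mathcal{L}_p}$. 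Finally, using additivity of $kk^{\mathcal{L}_p}$ together with $kk_{*}^{\mathcal{L}_p}(\CC,\CC)\cong\ZZ[u,u^{-1}]$ with $\deg u=2$ — so the suspended summand $S\CC$ contributes $0$ in even degree and $\ZZ$ in odd degree — one reads off $kk_0^{\mathcal{L}_p}(\CC,A)=\ZZ$ and $kk_1^{\mathcal{L}_p}(\CC,A)=\ZZ$.

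The step I expect to be the real content is the identification of the connecting map of the tame-smooth Pimsner--Voiculescu triangle as $1_{\CC[h]}-kk(\sigma)$ and the verification that it vanishes; once $kk(\sigma)=1_{\CC[h]}$ is in hand, everything else is a formal consequence of Lemma~\ref{lemma_triangulated} and of Bott periodicity for $kk^{\mathcal{L}_p}$.
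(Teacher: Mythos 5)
Your proposal is correct and follows essentially the same route as the paper: invoke the tame-smooth machinery of \cite{MR3054304} to get the Toeplitz extension with $\Lambda_A\cong_{\mathfrak{KK}^{alg}}\mcT_A\cong_{\mathfrak{KK}^{alg}}\CC[h]$, identify the connecting map as $1_{\CC[h]}-kk(\sigma)$, kill it via the diffotopy of evaluation maps, and finish with Lemma~\ref{lemma_triangulated}. The only cosmetic difference is that the paper carries out the identification of the connecting map using Theorems 27 and 33 of \cite{MR3054304} (with the frame $\Xi_i=S^i\otimes y^i$, $\overline\Xi_i=S^{*i}\otimes x^i$) rather than the representation-based argument of Theorem~\ref{exact_triangle}, which is not available here since $P(0)\neq0$.
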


\begin{proof}
In this case $A$ is a tame smooth generalized crossed product with
frame $\xi_i=y^{i}$ and $\bar\xi_i=x^{i}$ for $i\in \NN$. This frame satisfies the conditions of Definition 18 in \cite{MR3054304}, therefore we have a
linearly split extension
\begin{equation*}
0 \to \Lambda_A \overset{\iota}\to   \mcT_A \overset{\bar p}\to  A \to 0,
\end{equation*}
that yields an exact triangle
$$
S A \overset{kk(E)}\to   \Lambda_A \overset{kk(\iota)}\to \mcT_A \overset{kk(\bar p)}\to A
$$

By Theorem 27 of \cite{MR3054304}, $j_1:\CC[h]\to \Lambda_A$, defined by $j_1(x)= e_{00}\otimes x$ induces an invertible element $kk(j_1)$ and by Theorem 33
of~\cite{MR3054304}, $j_0:\CC[h]\to \mcT_A$ defined by $j_0(x)= 1\otimes x$ induces an invertible element $kk(j_0)$. We have a commutative diagram in
$\mathfrak{KK}^{alg}$
$$
\xymatrix{
\Lambda_A\ar[r]_{kk(\iota)} & \mcT_A\\
\CC[h]\ar[u]{kk(j_{1})}\ar[r]_{\alpha} & \CC[h]\ar[u]_{kk(j_0)}.
}
$$

We prove that $\alpha=1_{\CC[h]}-kk(\sigma)$ and that $1_{\CC[h]}=kk(\sigma)$, thus concluding that $\alpha=0$. By Theorem 33 of \cite{MR3054304}, we have
$kk(j_0)^{-1}=kk(id, Ad(S\otimes 1))kk(\iota_{1})^{-1}$. The product $kk(j_1)kk(\iota)kk(1, Ad(S\otimes 1))$ corresponds to a quasihomomorphism
$$
(\phi, \psi): \CC[h] \rightrightarrows \mcT\otimes A\triangleright \mcC,
$$
where $\phi(Q)=e_{00}\otimes Q$ and $\psi(Q)=e_{11}\otimes Q$ for all $Q\in \CC[h]$. Since $\phi$ and $\psi$ are orthogonal $kk(\phi, \psi)=kk(\phi)-kk(\psi)$.
We now compose $kk(\phi)$ and $kk(\psi)$ with $kk(j_1)^{-1}$ . Theorem 27 of \cite{MR3054304} characterizes $kk({j_1})^{-1}$ as given by a Morita equivalence
defined by
$$\Xi_i=S^{i}\otimes y^{i}\quad \text{ and }\quad\overline\Xi_i=S^{*i}\otimes x^{i}.$$
therefore $kk(\phi)kk(j_1)^{-1}$ is defined by the morphism $Q\mapsto Q$ and  $kk(\psi)kk(j_1)^{-1}$ is defined by $Q\mapsto xQy=\sigma(Q)$. This implies that
$\alpha=1_{\CC[h]}-kk(\sigma)$.

The commutative diagram
$$
\xymatrix{
\CC[h]\ar[r]_{\sigma} \ar[d]_{ev_0}& \CC[h]\ar[d]_{ev_0}\\
\CC\ar[r]_{id} &\CC,
}
$$
implies that $kk(\sigma)=1_{\CC[h]}$ and thus $\alpha=0$.

This implies the existence of an exact triangle in $\mathfrak{KK}^{alg}$
$$
S A \to  \CC \overset{0}\to \CC \to A.
$$
Using Lemma \ref{lemma_triangulated}, we obtain $A\cong S\CC\oplus \CC$ in $\mathfrak{KK}^{alg}$.
\end{proof}

In the case where $P=0$ we have the following result.

\begin{prop}\label{case1}
The generalized Weyl algebra $A=\CC[h](\sigma, P(h))$ with $P=0$ is isomorphic to $\CC$ in $\mathfrak{KK}^{alg}$.
\end{prop}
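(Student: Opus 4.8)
The plan is to recognize $A=\CC[h](\sigma,0)$ as an $\NN$-graded locally convex algebra whose degree-zero part is $\CC[h]$, and then to invoke Lemma~\ref{N-graded} twice. First I would note that when $P=0$ the relations \eqref{relations_GWA} become $xd=\sigma(d)x$, $yd=\sigma^{-1}(d)y$, $yx=0$ and $xy=\sigma(0)=0$ for all $d\in\CC[h]$; in particular $y^{n}x^{m}=y^{n-1}(yx)x^{m-1}=0$ and $x^{n}y^{m}=x^{n-1}(xy)y^{m-1}=0$ whenever $n,m\geq 1$. This lets me refine the $\ZZ$-grading of Lemma~\ref{GWA-grading} to an $\NN$-grading: set $B_{0}=\CC[h]$ and $B_{n}=\CC[h]y^{n}\oplus\CC[h]x^{n}$ for $n\geq 1$, so that $A=\bigoplus_{n\in\NN}B_{n}$. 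The only thing to verify is $B_{n}B_{m}\subseteq B_{n+m}$: the products $\CC[h]y^{n}\cdot\CC[h]y^{m}$ and $\CC[h]x^{n}\cdot\CC[h]x^{m}$ land in $\CC[h]y^{n+m}$ and $\CC[h]x^{n+m}$ after moving the polynomial factor past $y$ or $x$ via a power of $\sigma^{\pm 1}$; the mixed products $\CC[h]y^{n}\cdot\CC[h]x^{m}$ and $\CC[h]x^{n}\cdot\CC[h]y^{m}$ vanish for $n,m\geq1$ by the computation above; and multiplication by $B_{0}=\CC[h]$ visibly preserves each $B_{n}$. Since $A$ has a countable basis (Corollary~\ref{GWA_countable_basis}) it is a locally convex algebra with the fine topology, so this is an $\NN$-grading in the sense of Lemma~\ref{N-graded}, with $A_{0}=\CC[h]$.

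Next I would apply Lemma~\ref{N-graded}: $A$ is diffotopy equivalent to its degree-zero subalgebra $\CC[h]$, and $\CC[h]$ is diffotopy equivalent to $\CC$. Because $kk^{alg}$ is diffotopy invariant and functorial, a diffotopy equivalence induces an isomorphism in $\mathfrak{KK}^{alg}$; chaining the two equivalences yields $A\cong_{\mathfrak{KK}^{alg}}\CC[h]\cong_{\mathfrak{KK}^{alg}}\CC$, which is the assertion.

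I do not expect a genuine obstacle here. The only step needing care is checking that the proposed $\NN$-grading is multiplicative, and this rests entirely on the relations $yx=xy=0$, which force every mixed monomial $y^{n}x^{m}$ or $x^{n}y^{m}$ with $n,m\geq1$ to vanish. This is precisely the feature that distinguishes the case $P=0$ from the general (non-constant $P$) case treated in Section~\ref{P_non_constant}, where $A_{1}A_{-1}=(P)$ is a nonzero ideal of $\CC[h]$ and $A$ carries only the $\ZZ$-grading.
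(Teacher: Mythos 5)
Your proof is correct and is essentially the paper's own argument: the paper likewise puts the $\NN$-grading $\deg h=0$, $\deg x=\deg y=1$ on $A$ (which is multiplicative precisely because $xy=yx=0$, exactly the point you verify) and then concludes via Lemma~\ref{N-graded} and the diffotopy equivalence $\CC[h]\simeq\CC$. Your write-up merely spells out the multiplicativity check and the identification $A_n=\CC[h]y^n\oplus\CC[h]x^n$ that the paper leaves implicit.
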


\begin{proof}
The relations
$$xh=\sigma(h)x,\  yh=\sigma^{-1}(h)y,\ yx=0 \text{ and } xy=0$$
are compatible with the grading determined by $\deg h=0$, $\deg x=1$ and $\deg y=1$, therefore the algebra $A$ is $\NN$-graded. The result follows from Lemma \ref{N-graded} and the fact that the degree $0$ subalgebra of $A$ is equal to $\CC[h]$.
\end{proof}

\begin{bibdiv}
\begin{biblist}
\bib{MR1467459}{article}{
   author={Abadie, Beatriz},
   author={Eilers, S\o ren},
   author={Exel, Ruy},
   title={Morita equivalence for crossed products by Hilbert
   $C^*$-bimodules},
   journal={Trans. Amer. Math. Soc.},
   volume={350},
   date={1998},
   number={8},
   pages={3043--3054},
   issn={0002-9947},
}

\bib{MR1804517}{article}{
   author={Bavula, V. V.},
   author={Jordan, D. A.},
   title={Isomorphism problems and groups of automorphisms for generalized
   Weyl algebras},
   journal={Trans. Amer. Math. Soc.},
   volume={353},
   date={2001},
   number={2},
   pages={769--794},
   issn={0002-9947},
}

\bib{MR3465890}{article}{
   author={Brzezi\'nski, Tomasz},
   title={Circle and line bundles over generalized Weyl algebras},
   journal={Algebr. Represent. Theory},
   volume={19},
   date={2016},
   number={1},
   pages={57--69},
   issn={1386-923X},
}

\bib{MR2989456}{article}{
   author={Brzezi\'nski, Tomasz},
   author={Fairfax, Simon A.},
   title={Quantum teardrops},
   journal={Comm. Math. Phys.},
   volume={316},
   date={2012},
   number={1},
   pages={151--170},
   issn={0010-3616},
}

\bib{MR733641}{article}{
   author={Cuntz, Joachim},
   title={Generalized homomorphisms between $C^{\ast} $-algebras and
   $KK$-theory},
   conference={
      title={Dynamics and processes},
      address={Bielefeld},
      date={1981},
   },
   book={
      series={Lecture Notes in Math.},
      volume={1031},
      publisher={Springer, Berlin},
   },
   date={1983},
   pages={31--45},
}

\bib{MR1456322}{article}{
   author={Cuntz, Joachim},
   title={Bivariante $K$-Theorie f\"ur lokalkonvexe Algebren und der
   Chern-Connes-Charakter},
   language={German, with English summary},
   journal={Doc. Math.},
   volume={2},
   date={1997},
   pages={139--182},
   issn={1431-0635},
}

\bib{MR2240217}{article}{
   author={Cuntz, Joachim},
   title={Bivariant $K$-theory and the Weyl algebra},
   journal={$K$-Theory},
   volume={35},
   date={2005},
   number={1-2},
   pages={93--137},
   issn={0920-3036},
}

\bib{MR2340673}{book}{
   author={Cuntz, Joachim},
   author={Meyer, Ralf},
   author={Rosenberg, Jonathan M.},
   title={Topological and bivariant $K$-theory},
   series={Oberwolfach Seminars},
   volume={36},
   publisher={Birkh\"auser Verlag, Basel},
   date={2007},
   pages={xii+262},
   isbn={978-3-7643-8398-5},
}
	
\bib{MR2207702}{article}{
   author={Cuntz, Joachim},
   author={Thom, Andreas},
   title={Algebraic $K$-theory and locally convex algebras},
   journal={Math. Ann.},
   volume={334},
   date={2006},
   number={2},
   pages={339--371},
   issn={0025-5831},
}

\bib{MR3054304}{article}{
   author={Gabriel, Olivier},
   author={Grensing, Martin},
   title={Six-term exact sequences for smooth generalized crossed products},
   journal={J. Noncommut. Geom.},
   volume={7},
   date={2013},
   number={2},
   pages={499--524},
   issn={1661-6952},
}

\bib{MR2964680}{article}{
   author={Grensing, Martin},
   title={Universal cycles and homological invariants of locally convex
   algebras},
   journal={J. Funct. Anal.},
   volume={263},
   date={2012},
   number={8},
   pages={2170--2204},
   issn={0022-1236},
}
	
\bib{MR1247356}{article}{
   author={Hodges, Timothy J.},
   title={Noncommutative deformations of type-$A$ Kleinian singularities},
   journal={J. Algebra},
   volume={161},
   date={1993},
   number={2},
   pages={271--290},
   issn={0021-8693},
}
	
\bib{MR1812507}{book}{
   author={Neeman, Amnon},
   title={Triangulated categories},
   series={Annals of Mathematics Studies},
   volume={148},
   publisher={Princeton University Press, Princeton, NJ},
   date={2001},
   pages={viii+449},
   isbn={0-691-08685-0},
   isbn={0-691-08686-9},
}

\bib{MR2235811}{article}{
   author={Richard, Lionel},
   author={Solotar, Andrea},
   title={Isomorphisms between quantum generalized Weyl algebras},
   journal={J. Algebra Appl.},
   volume={5},
   date={2006},
   number={3},
   pages={271--285},
   issn={0219-4988},
}

\bib{MR0225131}{book}{
   author={Tr\`eves, Fran\c{c}ois},
   title={Topological vector spaces, distributions and kernels},
   publisher={Academic Press, New York-London},
   date={1967},
   pages={xvi+624},
}
		
\bib{MR671092}{book}{
   author={Valdivia, Manuel},
   title={Topics in locally convex spaces},
   series={North-Holland Mathematics Studies},
   volume={67},
   note={Notas de Matem\'atica [Mathematical Notes], 85},
   publisher={North-Holland Publishing Co., Amsterdam-New York},
   date={1982},
   pages={xiii+510},
   isbn={0-444-86418-0},
  
}

\end{biblist}
\end{bibdiv}

\end{document}